\DeclareMathOperator{\soc}{soc}
\DeclareMathOperator{\Aut}{Aut}
\DeclareMathOperator{\BiCay}{BiCay}
\DeclareMathOperator{\Cos}{Cos}
\DeclareMathOperator{\Diag}{Diag}
\DeclareMathOperator{\PSL}{PSL}
\DeclareMathOperator{\Sym}{Sym}
\DeclareMathOperator{\OG}{\mathcal{OG}}
\DeclareMathOperator{\B}{\mathcal{B}}
\theoremstyle{plain}
\newtheorem{Theorem}{Theorem}
\newtheorem{Corollary}[Theorem]{Corollary}
\newtheorem{Proposition}[Theorem]{Proposition}
\newtheorem{Construction}{Construction}
\theoremstyle{definition}
\newtheorem{remark}{Remark}
\newtheorem{method}{Method}
\newtheorem{Lemma}[Theorem]{Lemma}
\title{Four-Valent Oriented Graphs of Biquasiprimitive Type}
\author{Nemanja Poznanovi\'{c}\footnote{University of Melbourne, Australia.  email: {nempoznanovic@gmail.com}},\hspace{1cm} Cheryl E. Praeger\footnote{University of Western Australia, Australia. email:  {cheryl.praeger@uwa.edu.au}}}
\begin{document}
	\maketitle

\begin{abstract}
		Let $\OG(4)$ denote the family of all graph-group pairs $(\Gamma,G)$ where $\Gamma$ is 4-valent, connected and $G$-oriented  ($G$-half-arc-transitive).	Using a novel application of the structure theorem for biquasiprimitive permutation groups of the second author, we produce a description of all pairs $(\Gamma, G) \in\OG(4)$ for which every nontrivial normal subgroup of $G$ has at most two orbits on the vertices of $\Gamma$. In particular we show that $G$ has a unique minimal normal subgroup $N$ and that  $N \cong T^k$ for a simple group $T$ and $k\in \{1,2,4,8\}$.
		This provides a crucial step towards a general description of the long-studied family $\OG(4)$ in terms of a normal quotient reduction. We also give several methods for constructing pairs $(\Gamma, G)$ of this type and provide many new infinite families of examples, covering each of the possible structures of the normal subgroup $N$.
\end{abstract}

%	This paper forms part of an ongoing research project aiming to study the family of four-valent $G$-oriented  graphs using a normal quotient analysis.
%	In the first article on this topic,  a subfamily of these graphs was identified as `basic' in the sense that all graphs in this family are normal covers of at least one `basic' member. These basic members can be further divided into three types: quasiprimitive, biquasiprimitive and cycle type. The first and third of these types were analysed in some detail in earlier papers. In this paper we provide a description of the basic graphs of biquasiprimitive type. 
\section{Introduction}
All graphs considered in this paper are simple, undirected and finite. A graph $\Gamma$ is said to be \textit{$G$-oriented} (or $G$-half-arc-transitive) with respect to some group $G \leq \Aut(\Gamma)$ if $G$ acts transitively on the vertices and edges of $\Gamma$, but $G$ is not transitive on the arcs. In this case, $G$ has exactly two orbits on the arcs, and for each arc-orbit $\Delta$ and each edge $\{\alpha, \beta\}$, $\Delta$ contains exactly one of the arcs $(\alpha, \beta)$ or $(\beta, \alpha)$. Thus $\Delta$ is a $G$-invariant orientation of the edge set of $\Gamma$.

 Every $G$-oriented graph necessarily has even valency and all connected components of a $G$-oriented graph are pairwise isomorphic $G$-oriented graphs. It is thus natural to restrict attention to $G$-oriented graphs which are connected. For each even integer $m\geq2$, we let $\OG(m)$ denote the family of graph-group pairs $(\Gamma, G)$ where $\Gamma$ is connected, $m$-valent and $G$-oriented.

It is easy to see that the family $\OG(2)$ consists only of oriented cycle graphs. On the other hand, study of the family $\OG(4)$ has been an active area of research for several decades and has taken a number of different directions (especially because of its connection with the embedding of graphs into Riemann surfaces). For a good summary of this research up to 1998 see \cite{maruvsivc1998recent}, for a more recent overview see \cite[Section 2]{al2015finite}.

A particularly useful tool for studying $\OG(4)$ was given in \cite{maruvsivc1998half}, where several important combinatorial parameters were defined for graphs in this family based on certain cyclic subgraphs called \textit{$G$-alternating cycles}. This lead to the formulation of an approach to studying $\OG(4)$ by considering various quotients defined in terms of the $G$-alternating cycles, see \cite{maruvsivc1999tetravalent}. %The graphs which consist of only two alternating cycles are a family of arc-transitive circulants classified in \cite{maruvsivc1998half}. 

The combined results of \cite{maruvsivc1998half,maruvsivc1999tetravalent} 
provide a complete classification of some subfamilies of $\OG(4)$ and prove that  pairs $(\Gamma, G)\in\OG(4)$ not contained in these subfamilies are covers of other members of $\OG(4)$ satisfying certain combinatorial conditions. In particular, this approach naturally identifies two subfamilies of $\OG(4)$ as `alternating-cycle-basic' in the sense that all 4-valent $G$-oriented graphs (other than those already classified) are covers of these basic members. However the analysis in \cite{maruvsivc1998half,maruvsivc1999tetravalent} provided no tools for studying the `basic' graphs relative to this reduction.

%When considering the pairs $(\Gamma, G)\in \OG(4)$ containing at least three alternating cycles three important subfamilies of $\OG(4)$ have been given special attention, these are the $G$-loosely-, $G$-antipodally-, and $G$-tightly-attached graphs.
%The combined results of \cite{maruvsivc1998half,maruvsivc1999tetravalent} 
%give a complete classification of the $G$-tightly-attached graphs, and show that if $(\Gamma, G) \in \OG(4)$ and $\Gamma$ is not $G$-tightly attached, then $\Gamma$ is a cover of another graph $\Gamma_{\B}$ which itself is either $G$-loosely-attached or $G$-antipodally-attached.
%In this sense, the $G$-loosely-attached and $G$-antipodally-attached graphs may be seen as more 'basic` members of this family.   

More recently, a new framework for studying the family $\OG(4)$ was proposed in \cite{al2015finite} and developed further in \cite{al2016normal,al2017finite}. This new approach aims to analyse $\OG(4)$ using a normal quotient reduction, a method which has been successfully used to study other families of graphs with prescribed symmetry conditions, see for instance \cite{morris2009strongly, praeger1993nan, praeger1999finite}, but has never been applied to oriented graphs. The
aim of this approach (explained in detail below) is to describe the family $\OG(4)$ in terms of graph quotients arising from normal subgroups of the groups contained in this family. In particular, it is again possible to identify three subfamilies of $\OG(4)$ which are `normal-quotient-basic' in the sense that all pairs $(\Gamma, G)\in \OG(4)$ are \textit{normal} covers of at least one of these basic pairs (see Section \ref{ssNormal}).

It is likely that these two approaches may converge in a significant proportion of cases. The quotient graph $\Gamma_{\B}$ constructed in \cite{maruvsivc1999tetravalent} from a given pair $(\Gamma, G) \in \OG(4)$, related to the $G$-alternating cycles, has been studied again recently by Ramos Rivera and \v{S}parl \cite[Construction 5.4]{rivera2019new}. Provided a mild condition on parameters is satisfied (the attachment number should be less than the radius), they prove that $\Gamma_{\B}$ is a normal quotient  \cite[Theorem 5.6]{rivera2019new} and hence may be studied using the powerful theory developed in \cite{al2016normal,al2017finite,al2015finite}, supplemented by the results of this paper.

In this paper we answer \cite[Problem 1.2]{al2015finite} and provide a description of the pairs $(\Gamma, G)\in \OG(4)$ of biquasiprimitive type (one of the three families of pairs defined to be `basic' with respect to normal quotients). Our solution provides an important step towards a description of $\OG(4)$ in terms of normal quotients. For a detailed description of this programme and definitions of all basic pairs see Section \ref{ssNormal}.

\bigskip

\noindent\textbf{Biquasiprimitive Basic Pairs.}  If ($\Gamma, G)\in \OG(4)$ is basic of biquasiprimitive type then the group $G$ contains a normal subgroup $N$ with exactly two orbits on the vertices of $\Gamma$, and all nontrivial normal subgroups of $G$ have at most two orbits.
It is easy to see that $\Gamma$ is bipartite: since $\Gamma$ is connected there is an edge joining vertices in different $N$-orbits, and since $G$ normalizes $N$ and $\Gamma$ is $G$-edge-transitive, each edge joins vertices in different $N$-orbits. Thus the two orbits of $N$ form a bipartition of $\Gamma$.

It follows that there is an index two subgroup $G^+$ of $G$ which fixes the two parts of the bipartition of $\Gamma$ setwise. The main result of this paper is the following theorem which describes the biquasiprimitive basic pairs $(\Gamma, G)\in \OG(4)$ in a manner analogous to \cite[Theorem 1.3]{al2015finite} for the quasiprimitive case.

\begin{Theorem}\label{MainResult}
Suppose that $(\Gamma, G)\in \OG(4)$ is basic of biquasiprimitive type.  Then $G$ has a unique minimal normal subgroup  $N= \soc(G)$ contained in a unique intransitive index $2$ subgroup $G^+ \leq G$. Furthermore, $N \cong T^k$ where $T$ is a finite simple group and exactly one of the following holds:
\begin{enumerate}[(a)]
	\item $T$ is abelian and $k\leq 2$, or
	\item  $T$ is nonabelian,  $k \in \{1,2,4\}$, and $N$ is the unique minimal normal subgroup of $G^+$, or
	\item $T$ is nonabelian, $k= 2\ell$ with $\ell \in \{1,2,4\}$, $G^+$ has exactly two minimal normal subgroups each isomorphic to $T^\ell$, and $N$ is the direct product of these two subgroups.
\end{enumerate}
Moreover, there are infinitely many biquasiprimitive basic pairs $(\Gamma, G)\in \OG(4)$ described by each of the cases (a) - (c) and each value of $k$ in each case. 
\end{Theorem}

%\begin{remark}
%	For each value of $k$ in each of the cases (a) - (c) of Theorem \ref{MainResult} there are infinitely many examples (see Theorem \ref{MainPart2}). The examples are presented in Section \ref{SecConstructions}.  
%\end{remark}

The first part of this paper establishes that cases (a)-(c) of Theorem \ref{MainResult} must hold in two steps. In Section \ref{SecTwoTypes} we  show that if $(\Gamma, G)\in \OG(4)$ is basic of biquasiprimitive type then $G$ has a unique minimal normal subgroup $N$ and one of the three cases (a) (b) or (c) holds for some $k \geq 1$. For this we use the structure theorem for biquasiprimitive groups given in \cite{praeger2003finite}. Then in Section \ref{SecRestricting} we use combinatorial arguments to obtain the various possibilities for the value of $k$ (the number of simple direct factors of $\soc(G)$) in each case.  

%In Section \ref{SecConstructions} of this paper we will show that each of the three cases of Theorem \ref{MainResult} does occur infinitely often by providing a number of families of examples of biquasiprimitive basic pairs.

In the second part of this paper (Section \ref{SecConstructions}), we provide methods for constructing biquasiprimitive basic pairs and use them to construct various families of examples. In particular, we provide an infinite family of basic pairs for each of the cases described in Theorem \ref{MainResult}, for each possible value of $k$, thus proving the final assertion of Theorem \ref{MainResult}.

\section{Preliminaries}
 Unless otherwise stated we will let $V\Gamma$, $E\Gamma$ and $A\Gamma$ denote the vertex-, edge-, and arc-set of a given graph $\Gamma$ (an \textit{arc} is an ordered pair of adjacent vertices). Given a vertex $\alpha \in V\Gamma$ we let $\Gamma(\alpha)$ denote the neighbourhood of $\alpha$ in $\Gamma$. For fundamental graph-theoretic concepts we refer the reader to \cite{godsil200AGT}, and for group-theoretic concepts not defined here, please refer to \cite{praeger2018permutation}.

Given a group $G$ acting on a set $X$, we will always let $G^X$ denote the subgroup of $\Sym(X)$ induced by the group $G$. Given a permutation $g\in G$ and an element $x\in X$ we will let $x^g$ denote the image of $x$ under $g$. A permutation group $G^X$ is said to be \textit{semiregular} if only the identity element of $G$ fixes a point in $X$, and is said to be \textit{regular} if it is semiregular and transitive.

\subsection{G-oriented Graphs}
If $\Gamma$ is a $G$-oriented graph then the group $G$ is transitive on the vertices and edges but not on the arcs of $\Gamma$. It follows that the group $G$ has two orbits on the arc set of $\Gamma$ and these two orbits are paired. (Every arc $(u,v)$ in one orbit will have its reverse arc $(v,u)$ in the other orbit.) Either of these two $G$-orbits on the arc set of $\Gamma$ naturally give rise to a $G$-invariant orientation of the edges of $\Gamma$: simply take any arc $(u,v)$ of $\Gamma$ and then orient each edge $\{x,y\}$ from $x$ to $y$ if and only if $(u,v)^g = (x,y)$ for some $g\in G$. 

Given a pair $(\Gamma, G)\in \OG(4)$, a vertex $v_0 \in V\Gamma$ and any $G$-invariant orientation of $E\Gamma$, it will always be the case that two of the four edges incident to $v_0$ are oriented from $v_0$ to one of its neighbours, while the other two edges are oriented from a neighbour to $v_0$. In particular, the stabilizer $G_{v_0}$ of a vertex $v_0 \in V\Gamma$ will always have two orbits of length two on the neighbourhood of $v_0$, and we can think of these two orbits as the in-neighbours and out-neighbours of $v_0$ with respect to a given orientation. We denote these two sets by $\Gamma_{in}(v_0)$ and $\Gamma_{out}(v_0)$ respectively.

Given a connected, 4-valent, $G$-vertex-transitive graph $\Gamma$, we may show that $(\Gamma, G) \in \OG(4)$ by showing that $G_{v_0}$ has two orbits of size 2 on $\Gamma(\alpha)$, and that no automorphism of $G$ can reverse an edge of $\Gamma$.

An \textit{oriented $s$-arc} of a $G$-oriented graph is a sequence of vertices $(v_0, v_1,...,v_s)$ of $\Gamma$, such that for each $i \in \{0,...,s-1\}$, $v_i$ and $v_{i+1}$ are adjacent, and each edge $\{v_i, v_{i+1}\}$ is oriented from $v_i$ to $v_{i+1}$.
We will make use of the following important fact concerning oriented $s$-arcs of $G$-oriented graphs, the proof of which can be found in the first part of the proof of \cite[Lemma 6.2]{al2015finite}.

	\begin{Lemma}\label{regular_s_arcs}
	Let $(\Gamma, G) \in \OG(4)$ and let $s\geq 1$ be the largest integer such that $G$ acts transitively on the oriented $s$-arcs of $\Gamma$. Then $G$ acts regularly on the oriented $s$-arcs of $\Gamma$.
\end{Lemma}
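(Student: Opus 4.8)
The plan is to show that the stabilizer in $G$ of an oriented $s$-arc is trivial; combined with the hypothesis that $G$ is transitive on oriented $s$-arcs, this immediately yields regularity. So fix an oriented $s$-arc $\vec{a} = (v_0, v_1, \ldots, v_s)$ and write $M = G_{\vec{a}}$ for its stabilizer; as $\vec{a}$ is an ordered sequence, $M$ fixes each $v_i$. I want to prove $M = 1$, and since $G \leq \Aut(\Gamma)$ acts faithfully on $V\Gamma$ (an automorphism fixing every vertex is the identity), it suffices to show that $M$ fixes every vertex of $\Gamma$.

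First I would exploit the maximality of $s$. Every oriented $s$-arc extends to an oriented $(s+1)$-arc precisely by appending an out-neighbour of its terminal vertex, and as noted above each vertex has exactly two out-neighbours; so $\vec{a}$ has exactly two such extensions. Deleting the last vertex gives a $G$-equivariant surjection from oriented $(s+1)$-arcs onto oriented $s$-arcs whose fibre over $\vec{a}$ is this pair of extensions, and $M$ stabilizes this fibre setwise. Since $G$ is transitive on the base but, by maximality of $s$, \emph{not} transitive on the total space, the standard transitivity-on-fibres criterion forces $M$ to be intransitive on the two extensions; as an action on two points, this means $M$ fixes both out-neighbours of $v_s$.

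The core step is a propagation argument, and this is where I expect the real work to lie. The reasoning above applies verbatim to \emph{any} oriented $s$-arc fixed pointwise by $M$: if $(x_0, \ldots, x_s)$ is an oriented $s$-arc all of whose vertices lie in the fixed-point set $F$ of $M$, then $M$ fixes both out-neighbours of $x_s$, so these too lie in $F$. Starting from $\vec{a}$ and sliding the window of $s+1$ consecutive vertices forward --- passing from $(v_0,\ldots,v_s)$ to $(v_1,\ldots,v_s,u)$ for an out-neighbour $u$ of $v_s$, then repeating --- I would show by induction along directed walks that every vertex reachable from $v_0$ by a directed path lies in $F$. The delicate point is to keep careful track of the sliding window so that the maximality-of-$s$ conclusion is legitimately reapplicable at each step (this uses $s \geq 1$, so that the shifted tuple really is an oriented $s$-arc in $F$).

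It then remains to check that this forward-reachable set $R$ is all of $V\Gamma$, which is where the global structure of the orientation enters. Here I would use that every vertex has in-degree and out-degree equal to $2$: counting the oriented edges with tail in $R$ against those with head in $R$ shows that there can be no oriented edge between $R$ and its complement in either direction, so connectedness of $\Gamma$ forces $R = V\Gamma$. Hence $M$ fixes every vertex, so $M = 1$, and $G$ acts regularly on oriented $s$-arcs. The main obstacle, as indicated, is the propagation step: turning the local ``fixes both out-neighbours'' statement into a global ``fixes everything'' conclusion requires both the sliding-window induction and the counting/connectivity argument that upgrades forward-closedness to surjectivity.
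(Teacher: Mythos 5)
Your argument is correct: the maximality of $s$ forces the stabilizer of an oriented $s$-arc to fix both of its one-step extensions, the sliding-window induction propagates this along directed walks, and the in-degree/out-degree counting plus connectedness upgrades forward-closedness of the fixed set to all of $V\Gamma$, giving a trivial stabilizer. The paper does not reprove this lemma but defers to the first part of the proof of Lemma 6.2 of \cite{al2015finite}, and your proof is essentially that same standard argument (the only cosmetic slip is that the induction directly yields the vertices forward-reachable from $v_s$, not from $v_0$, but this is exactly what your counting step needs).
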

Now let $(\Gamma, G) \in \OG(4)$ and take a vertex $\alpha \in V\Gamma$. Let $s$ be as in the statement of Lemma \ref{regular_s_arcs} and consider an  oriented $s$-arc $(v_0,v_1,...,v_s)$ of $\Gamma$ where $\alpha := v_0$. Since $G$ is regular on the oriented $s$-arcs of $\Gamma$, it follows that the vertex-stabilizer $G_{v_0}$ is regular on the oriented $s$-arcs starting at $v_0$. From this  it follows that for each $i$ with $0\leq i \leq s$, the subgroup $G_{{v_0},...,{v_{s-i}}}$ has order $2^i$. In particular, $|G_{{v_0},...,{v_{s-1}}}| = 2$, and the stabilizer of a vertex $G_{v_0} = G_\alpha$ is a 2-group.

\subsection{Normal Quotients} \label{ssNormal}	

	Given a pair $(\Gamma, G) \in \OG(4)$ and a normal subgroup $N$ of $G$, we define a new graph $\Gamma_N$ called a \textit{$G$-normal-quotient} of $\Gamma$. The vertices of $\Gamma_N$  are the $N$-orbits on the vertices of $\Gamma$, with an edge between two $N$-orbits $\{B, C\}$ in $\Gamma_N$ if and only if there is an edge of the form $\{\alpha, \beta\}$ in $\Gamma$, with $\alpha \in B$ and $\beta \in C$. 
	The group $G$ induces a group $G_N = G/K$ of automorphisms of $\Gamma_N$, where $K$ is the kernel of the $G$-action on $\Gamma_N$. By definition $N\leq K$, and hence the $K$-orbits are the same as the $N$-orbits so $\Gamma_K=\Gamma_N$. However $K$ may be strictly larger than $N$.   
	
	If $(\Gamma_N, G_N)$ is itself a member of $\OG(4)$, that is, $\Gamma_N$ is a 4-valent $G_N$-oriented graph, then $\Gamma$ is said to be a \textit{$G$-normal cover of $\Gamma_N$}. In general however, the pair $(\Gamma_N, G_N)$ need not lie in  $\OG(4)$, and the various possibilities for such normal quotient pairs $(\Gamma_N, G_N)$ were identified in \cite[Theorem 1.1]{al2015finite}. In particular, it was proved  that for any $(\Gamma, G) \in \OG(4)$, and any nontrivial normal subgroup $N$ of $G$, either $(\Gamma_N, G_N)$ is also in $\OG(4)$ and $\Gamma$ is a $G$-normal cover of $\Gamma_N$, or $\Gamma_N$ is isomorphic to $K_1$, $K_2$ or a cycle $C_r$, for some $r\geq 3$. 
	A pair $(\Gamma_N, G_N)$ where $\Gamma_N$ is isomorphic to one of $K_1$, $K_2$ or $C_r$ is defined to be \textit{degenerate}, while a pair ($\Gamma, G) \in \OG(4)$ for which $(\Gamma_N, G_N)$ is degenerate relative to every non-trivial normal subgroup $N$ of $G$ is defined to be \textit{basic}.  
	
	Since \cite[Theorem 1.1]{al2015finite} ensures that every member of $\OG(4)$ is a normal cover of a basic pair, this result suggests a framework for studying the family $\OG(4)$ using normal quotient reduction. The goal of this framework is to improve understanding of this family by developing a theory to describe the basic pairs in $\OG(4)$, and subsequently developing a theory to describe the $G$-normal covers of these basic pairs.
	
	Work in this direction was initiated in \cite{al2015finite} where the basic pairs were further divided into three types and the basic pairs of quasiprimitive type were analysed. A pair $(\Gamma, G) \in \OG(4)$ is said to be basic of \textit{quasiprimitive type} if all $G$-normal quotients $\Gamma_N$ of $\Gamma$ are isomorphic to $K_1$. This occurs precisely when all non-trivial normal subgroups of $G$ are transitive on the vertices of $\Gamma$. A permutation group with this property is said to be quasiprimitive, and there is a general structure theorem available for quasiprimitive groups  analogous to the O'nan-Scott Theorem for primitive permutation groups in \cite{praeger1993nan}.
	Using this tool, as well as combinatorial properties of the family $\OG(4)$, it was shown \cite[Theorem 1.3]{al2015finite} that if $(\Gamma, G) \in \OG(4)$ is basic of quasiprimitive type, then $G$ has a unique minimal normal subgroup $N \cong T^k$ where $T$ is a nonabelian finite simple group and $k\leq 2$.
	
	Of course, every pair $(\Gamma, G) \in \OG(4)$ will have at least one normal quotient $\Gamma_N$ isomorphic to $K_1$ since we may take the quotient with respect to the full group $G$. If the only normal quotients of a pair $(\Gamma, G) \in \OG(4)$ are the graphs $K_1$ or $K_2$, and $\Gamma$ has at least one $G$-normal quotient isomorphic to $K_2$, then $(\Gamma, G)$ is said to be basic of \textit{biquasiprimitive type}. The group $G$ here is biquasiprimitive: it is not quasiprimitive but each nontrivial normal subgroup has at most two orbits. Again, there is a structure theorem for biquasiprimitive groups available in \cite{praeger2003finite}.
	
	The basic pairs in $\OG(4)$ which are neither quasiprimitive nor biquasiprimitive must have at least one normal quotient isomorphic to a cycle graph $C_r$, and hence are said to be of \textit{cycle type.} Work towards describing the basic pairs of cycle type was initiated in \cite{al2016normal} where several important families of these graphs, which have already been discussed in the literature, were analysed from a normal quotient point of view. A more general analysis of these pairs was done in \cite{al2017finite}, however further work is required to understand this type.
	
	The above discussion outlining the three types of basic pairs $(\Gamma, G) \in \OG(4)$ is summarised in Table \ref{BasicTable}. This table also includes references to the papers where the corresponding basic pairs were previously studied.
	The objective of this paper is to describe the basic pairs $(\Gamma, G) \in \OG(4)$ of biquasiprimitive type, several families of which were constructed in \cite{PPMatrix}.
	\begin{table}
		
		\begin{tabular}{ l l l l }
			\hline
			Basic Type & Possible $\Gamma_N$ for $1\neq N \lhd G$ &  Conditions on $G$-action on vertices & Reference \\ 
			\hline
			Quasiprimitive & $K_1$ only & quasiprimitive & \cite{al2015finite}\\  
			Biquasiprimitive & $K_1$ and $K_2$ only & biquasiprimitive  &  -- \\
			Cycle & At least one $C_r $ $(r\geq 3)$ & at least one quotient action $D_{2r}$ or $\mathbb{Z}_r$& \cite{al2016normal,al2017finite}\\
			\hline
		\end{tabular}\caption{Types of Basic Pairs $(\Gamma, G) \in \OG(4)$.}\label{BasicTable}
	\end{table}
	
\subsection{Bi-Cayley Graphs.}\label{ssBiCay} A \textit{bi-Cayley graph} $\Gamma$ is a graph which admits a semiregular group of automorphisms $H$ with two orbits on the vertex set of $\Gamma$. 
These graphs are important for our purposes as for many of the pairs $(\Gamma, G)\in \OG(4)$ studied in this paper, the group $G$ will have a normal subgroup $N$ contained in $G^+$ and acting semiregularly on the two parts of the bipartition of $\Gamma$. In such cases, $\Gamma$ is a bi-Cayley graph.

	Every bi-Cayley graph of a group $H$ may be constructed in the following way. Let $R$ and $L$ be inverse-closed subsets of $H$ which do not contain the identity, and let $S$ be a subset of $H$. Define the graph $\Gamma = \BiCay(H, R, L, S)$ to be a graph whose vertex set is the union of the sets $H_0=\{h_0 : h \in H\}$ and $H_1=\{h_1 : h \in H\}$ (two copies of the group $H$), and whose edge set is the union of the \textit{right edges} $\{\{h_0, g_0\} : gh^{-1}\in R\}$, the \textit{left edges} $\{\{h_1, g_1\} : gh^{-1}\in L\}$, and the \textit{spokes} $\{\{h_0, g_1\} : gh^{-1}\in S\}$. Note that if $\Gamma$ is connected then $H$ is generated by $R\cup L \cup S$  (however the converse does not necessarily hold).
	The group $H$ then acts by right multiplication on the vertices of $\Gamma$, and this action is semiregular with two orbits $H_0$ and $H_1$. See for instance \cite{conder2016edge,zhou2016automorphisms}.

	%It is easy to see that if the vertex set $H_0$ lies in a connected component of $\Gamma$, then $\Gamma$ is connected.
%	In particular, $\Gamma$ is connected if there is a path from $(1_H)_0$ to $(h)_0$ for any $h\in H$. Any such path must have even length and consist of repeated left multiplication in $H$ by an element of $S$ followed by an element of $S^{-1} = S$. Therefore, if $\langle S^2 \rangle = H$ then such paths will exist for all $h\in H$ and the $\Gamma$ . In summary, if $\Gamma = \BiCay(H, \emptyset, \emptyset, S)$ where $S = S^{-1}$, then $\Gamma$ is connected if $\langle S^2 \rangle = H$.
	
%	a sufficient condition for the connectivity of $\Gamma$ is that $\langle S^2 \rangle = H$. If $S = S^{-1}$ then $\Gamma$ is connected if $H = \langle S^2\rangle$.

%	If a graph $\Gamma \cong \BiCay(H, \emptyset, \emptyset, S)$ is connected, then $S$ necessarily generates the group $H$. Moreover, it is easy to see that if the vertex set $H_0$ lies in a connected component of $\Gamma$, then $\Gamma$ is connected.
%	In particular, $\Gamma$ is connected if there is a path from $(1_H)_0$ to $(h)_0$ for any $h\in H$. Any such path must have even length and consist of repeated left multiplication in $H$ by an element of $S$ followed by an element of $S^{-1} = S$. Thus, a sufficient condition for the connectivity of $\Gamma$ is that $\langle S^{-1}S \rangle = H$. If $S = S^{-1}$ then $\Gamma$ is connected if $H = \langle S^2\rangle$.
\section{Biquasiprimitive Basic Pairs: two types.} \label{SecTwoTypes}
Suppose now that $(\Gamma, G) \in \OG(4)$ is a basic pair of biquasiprimitive type and recall that this implies that $\Gamma$ is bipartite. Let $X$ denote the vertex set of $\Gamma$ with $\{\Delta, \Delta'\}$ the bipartition of $X$, and let $G^+$ be the index 2 subgroup of $G$ fixing the two biparts $\Delta$ and $\Delta'$ setwise. Since $\Gamma$ is $G$-vertex-transitive it follows that $G^+$ is transitive on both $\Delta$ and $\Delta'$. 

In this section we will begin working towards the proof of Theorem \ref{MainResult}. We start with a lemma about the intransitive normal subgroups of $G$.

\begin{Lemma}\label{G^+ faithful}
		Let $ (\Gamma, G) \in \OG(4)$ be basic of biquasiprimitive type, and let $X$ denote the vertex set of $\Gamma$. Let $G^+$ be the subgroup of $G$ of index two with orbits $\Delta, \Delta'$ (the biparts of $X$). Then
		\begin{enumerate}
			\item [(a)] $G^+$ is faithful on $\Delta$ $($and $\Delta')$, and
			\item[(b)] any non-trivial intransitive normal subgroup $N$ of $G$ must have the sets $\Delta$ and $ \Delta'$ as its two orbits on $X$. In particular, $N$ is contained in $G^+$.
		\end{enumerate}
\end{Lemma}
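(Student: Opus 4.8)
The two statements can be handled by largely independent arguments, both exploiting two facts established earlier: the defining biquasiprimitive property that every nontrivial normal subgroup of $G$ has at most two orbits on $X$, and the fact that for each $\alpha\in V\Gamma$ the stabilizer $G_\alpha$ preserves the orientation and hence fixes setwise the two size-two sets $\Gamma_{in}(\alpha)$ and $\Gamma_{out}(\alpha)$. Throughout I fix an element $g\in G\setminus G^+$, which interchanges the biparts $\Delta$ and $\Delta'$.

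For part (a), let $K$ be the kernel of the action of $G^+$ on $\Delta$ and suppose for contradiction that $K\neq 1$. Since $K\trianglelefteq G^+$ and $G^+\trianglelefteq G$, the conjugate $K^g$ is also normal in $G^+$, and because $g^2\in G^+$ normalizes $K$ one checks that $N:=KK^g\leq G^+$ is a nontrivial normal subgroup of $G$. By construction $K$ fixes $\Delta$ pointwise and $K^g$ fixes $\Delta'=\Delta^g$ pointwise, so on $\Delta$ the group $N$ acts as $K^g$ and on $\Delta'$ it acts as $K$; since $N\leq G^+$ its orbits lie inside the biparts, with at least one in each. The biquasiprimitive hypothesis forces $N$ to have at most two orbits in total, so $N$ has exactly one orbit in each bipart, whence $K$ is transitive on $\Delta'$. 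Now take any $\alpha\in\Delta$: as $K$ fixes $\alpha$ we have $K\leq G_\alpha$, so $K$ stabilizes $\Gamma(\alpha)\subseteq\Delta'$ setwise, and transitivity of $K$ on $\Delta'$ forces $\Delta'=\Gamma(\alpha)$. But $K\leq G_\alpha$ also preserves $\Gamma_{in}(\alpha)$ and $\Gamma_{out}(\alpha)$, each of size two, so every $K$-orbit on $\Gamma(\alpha)=\Delta'$ has length at most two, contradicting transitivity on a set of size four. Hence $K=1$, and faithfulness on $\Delta'$ follows by conjugating with $g$.

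For part (b), let $N$ be a nontrivial intransitive normal subgroup of $G$. By biquasiprimitivity $N$ has exactly two orbits, which form a $G$-invariant partition of $X$ into two blocks of equal size. It suffices to prove $N\leq G^+$, for then $N$ preserves the bipartition and its two equal-sized orbits must be precisely $\Delta$ and $\Delta'$. Suppose instead $N\not\leq G^+$; then $N$ contains an element interchanging $\Delta$ and $\Delta'$, and $M:=N\cap G^+$ is a normal subgroup of $G$ of index two in $N$. If $M=1$ then $|N|=2$ and, its generator having no fixed points, all $N$-orbits have size two, giving $|X|\leq 4$, which is impossible for a connected $4$-valent simple graph; so $M\neq 1$. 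As $M\leq G^+$ it is intransitive, hence by biquasiprimitivity has exactly two orbits, one in each bipart, so $M$ is transitive on $\Delta$. Combining the transitivity of $M$ on $\Delta$ with an element of $N$ sending $\Delta$ onto $\Delta'$ shows that $N$ is transitive on $X$, contradicting the choice of $N$. Therefore $N\leq G^+$, as required.

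The most delicate point is the step in part (a) ruling out $K\neq 1$: passing from the merely $G^+$-normal kernel $K$ to the genuinely $G$-normal subgroup $N=KK^g$ is what lets us invoke the two-orbit hypothesis, and the final contradiction rests entirely on the oriented local structure, namely that $G_\alpha$ cannot fuse the in- and out-neighbours of $\alpha$. I would present (a) before (b), though the two arguments are essentially independent.
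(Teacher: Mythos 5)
Your proof is correct and follows essentially the same route as the paper: for (a) you form the $G$-normal subgroup $KK^g\leq G^+$ and use the two-orbit property to force $K$ transitive on $\Delta'$, contradicting $K\leq G_\alpha$ together with the non-transitivity of $G_\alpha$ on $\Gamma(\alpha)$; for (b) you show $N\cap G^+\neq 1$ via $|X|\geq 5$ and deduce that its two orbits, hence those of $N$, are $\Delta$ and $\Delta'$. The only cosmetic differences are that the paper notes $K\cap K^g=1$ (so the product is direct) and argues (b) directly rather than by contradiction.
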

\begin{proof}
	(a). Let $K$ be the subgroup of $G^+$ fixing $\Delta$ pointwise and suppose that $K \neq 1$, and hence that $K$ acts non-trivially on $\Delta'$. If $g \in G\backslash G^+$ then $K^g$ is the pointwise stabilizer of $\Delta'$ in $G^+$, and hence $K \cap K^g = 1$, so $\langle K, K^g \rangle \cong K \times K^g$.
	
	Now since both $K$ and $K^g$ are normal in $G^+$, and since $g^2 \in G^+$ (because $|G:G^+| = 2$), it follows that  $(K \times K^g)^g = K \times K^g$, and so $K\times K^g$ is a normal subgroup of $G$ contained in $G^+$. Thus $ K \times K^g$ has two orbits $\Delta$ and $\Delta'$. But this implies that $K$ is transitive on $\Delta'$, which is impossible since for any $\alpha \in \Delta$ we have $K \leq G_\alpha$, and $G_\alpha$ is not transitive on $\Gamma(\alpha) \subset \Delta'$. Thus part (a) holds.
	
	(b). Since $|V\Gamma| \geq |\{\alpha\} \cup \Gamma(\alpha)| = 5$, it follows that $|N| \geq \frac{1}{2}|V\Gamma| >2$, hence $N\cap G^+ \neq 1$ since $|N:N\cap G^+|\leq 2$. Thus $N\cap G^+$ is a nontrivial intransitive normal subgroup of $G$ contained in $G^+$, so its orbits are $\Delta$ and $\Delta'$, and these must also be the orbits of the intransitive normal subgroup $N$.
\end{proof}

Next we introduce a convenient framework for investigating these graphs, based on the Imprimitive Wreath Embedding Theorem \cite[Theorem 5.5]{praeger2018permutation}
which identifies the vertex set $X$ with $\{ v_i\mid v\in V, i\in\{0,1\}\}$, and $G$ with a transitive subgroup of $\Sym(V)\wr\Sym(2)$ in its natural imprimitive action, so that  $\Delta = \{ v_0\mid v\in V\}$ and $\Delta'=\{ v_1\mid v\in V\}$.   Since $G$ is transitive, its subgroup $G^+$ induces transitive subgroups
$(G^+)^\Delta$ and $(G^+)^{\Delta'}$ on $\Delta$ and $\Delta'$, each of which we identify with a transitive subgroup of $\Sym(V)$.    

Let $\tau\in\Sym(V)\wr\Sym(2)$ generate the top group, that is, $\tau: v_\varepsilon \rightarrow v_{1-\varepsilon}$ for each $v\in V, \varepsilon\in\{0,1\}$,
and note that $\tau$ conjugates each element $(h_1, h_2)\in\Sym(V)\times\Sym(V)$ to its reverse $(h_2, h_1)$. For a group $H$, $y\in H$, and $\varphi\in\Aut(H)$, we denote by $\iota_y$ the inner automorphism of $H$ induced by $y$, that is  $h\mapsto y^{-1}hy$, and by  $ \Diag_\varphi(H\times H)=\{(h,h^\varphi)\mid h\in H\}$ the diagonal subgroup of $H\times H$ corresponding to $\varphi$.

\begin{Proposition}\label{iso}
	Let $ (\Gamma, G) \in \OG(4)$ be basic of biquasiprimitive type, and let $X$ denote the vertex set of $\Gamma$. Let $G^+$ be the subgroup of $G$ of index two with orbits $\Delta, \Delta'$ in $X$, and let $H$ be the permutation group induced by $G^+$ on $\Delta$. Let $\alpha\in\Delta$ and $\beta\in \Gamma(\alpha)\subseteq \Delta'$. Then replacing $G$ by a conjugate in $\Sym(X)$ if necessary, we may take $X=\{ v_i\mid v\in V, i\in\{0,1\}\}, \Delta, \Delta'$  and $\alpha=u_0$ as above, where $u\in V$, and we may identify $H$ with a transitive subgroup of $\Sym(V)$, such that 
	\begin{enumerate}
		\item[(a)] $G\leq H\wr\Sym(2)$, so $H = (G^+)^\Delta = (G^+)^{\Delta'}$; and
		\item[(b)] for some  $y\in H$ and $\varphi\in\Aut(H)$ with $\varphi^2= \iota_y$, we have  $G^+ = \Diag_\varphi(H\times H),$ and $G=\langle G^+, g\rangle$, where $g:=(y,1)\tau$, and $\beta= \alpha^g=(u^y)_1$.  Also $G_\alpha = G_\alpha^+ \cong H_\alpha$ is a $2$-group.
	\end{enumerate}	
\end{Proposition}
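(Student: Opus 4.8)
The plan is to realise the imprimitive action concretely and then to choose coordinates on the two biparts so that the three required normalisations hold simultaneously: the base group sits inside $H\times H$, the coset representative has the shape $g=(y,1)\tau$, and $\beta=\alpha^g$. First I would invoke the Imprimitive Wreath Embedding Theorem to identify $X$ with $\{v_i\mid v\in V,\ i\in\{0,1\}\}$ and $G$ with a transitive subgroup of $\Sym(V)\wr\Sym(2)$ with blocks $\Delta=\{v_0\}$ and $\Delta'=\{v_1\}$, fixing a coordinatisation $c_0\colon V\to\Delta$ of the first bipart with $\alpha=u_0$; the phrase ``replacing $G$ by a conjugate in $\Sym(X)$'' is exactly this freedom to choose the coordinatisations. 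Since $G$ is vertex-transitive while $\alpha\in\Delta$ and $\beta\in\Delta'$ lie in different biparts, there is an element $g_0\in G\setdiff G^+$ with $\alpha^{g_0}=\beta$, and such a $g_0$ necessarily swaps the two blocks. The crucial step is to coordinatise the second bipart using $g_0^{-1}$, declaring $v_1:=(c_0(v))^{g_0^{-1}}$. Because $g_0$ normalises $G^+$, transporting the $\Delta'$-action of any $h\in G^+$ through this coordinatisation returns the $\Delta$-action of the conjugate $g_0^{-1}hg_0\in G^+$, which lies in $H=(G^+)^\Delta$; as $h$ runs over $G^+$ this recovers all of $H$. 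Hence $(G^+)^{\Delta'}=(G^+)^\Delta=H$, the base group of $G$ lies in $H\times H$, and together with the block swap this gives $G\le H\wr\Sym(2)$, proving (a).

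For (b) I would use Lemma \ref{G^+ faithful}(a): $G^+$ is faithful on each of $\Delta,\Delta'$, so both coordinate projections $\pi_0,\pi_1\colon G^+\to H$ are isomorphisms. Thus $G^+$ is the graph of the automorphism $\varphi:=\pi_1\circ\pi_0^{-1}\in\Aut(H)$, that is $G^+=\Diag_\varphi(H\times H)$. Writing $g_0$ in wreath coordinates with respect to $c_0$, the new labelling of $\Delta'$, and the top element $\tau$, the choice $v_1=(c_0(v))^{g_0^{-1}}$ forces $g_0$ to act trivially on the second coordinate: it sends $v_1=(c_0 v)^{g_0^{-1}}$ to $c_0(v)=v_0$. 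A short computation then gives $g_0=(y,1)\tau$ with $y:=(g_0^2)^{\Delta}\in H$ (legitimate since $g_0^2\in G^+$), and consequently $\alpha^{g_0}=(u_0)^{g_0}=(u^y)_1=\beta$, as required. Setting $g:=g_0$ we obtain $G=\langle G^+,g\rangle$ because $[G:G^+]=2$ and $g\notin G^+$.

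It remains to extract the relation $\varphi^2=\iota_y$ and to treat the vertex stabiliser. Since $G^+\lhd G$, the element $g=(y,1)\tau$ normalises $\Diag_\varphi(H\times H)$; conjugation by $g$ sends a base element $(h_1,h_2)$ to $(h_2,h_1^{\iota_y})$, so applying this to a typical element $(h,h^\varphi)\in G^+$ and demanding that the image $(h^\varphi,h^{\iota_y})$ again lie in $\Diag_\varphi(H\times H)$ yields $h^{\varphi^2}=h^{\iota_y}$ for all $h$, i.e. $\varphi^2=\iota_y$. For the stabiliser, any element fixing $\alpha\in\Delta$ cannot interchange the biparts, so $G_\alpha=G_\alpha^+$; restricting the isomorphism $\pi_0$ then identifies $G_\alpha^+$ with $H_\alpha$, which is a $2$-group by the discussion following Lemma \ref{regular_s_arcs}. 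The main obstacle is pinning down the \emph{single} coordinatisation of $\Delta'$ that makes all of part (a), the normal form $g=(y,1)\tau$, and $\beta=\alpha^g$ hold at once; coordinatising via $g_0^{-1}$ for an element $g_0$ already sending $\alpha$ to $\beta$ (rather than via an arbitrary block-swapping element) is precisely what reconciles these competing demands, after which the remaining steps are the routine bookkeeping of wreath-product conjugation.
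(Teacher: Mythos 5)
Your proposal is correct and follows essentially the same route as the paper: the Imprimitive Wreath Embedding Theorem, Lemma~\ref{G^+ faithful}(a) to realise $G^+$ as $\Diag_\varphi(H\times H)$, and the normality of $G^+$ in $G$ to extract $\varphi^2=\iota_y$. Your device of coordinatising $\Delta'$ via $g_0^{-1}$ is just the paper's conjugation by $s=(1,h_2)$ performed in a single step, so the two arguments coincide up to packaging.
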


\begin{proof}
	The first assertion that we may choose the identification of $X, \Delta, \Delta'$ so that the transitive subgroups $(G^+)^\Delta$ and $(G^+)^{\Delta'}$ determine the same subgroup $H$ of $\Sym(V)$ follows from  the embedding theorem  \cite[Theorem 5.5]{praeger2018permutation}.  Thus $G\leq H\wr\Sym(2)$, and
	$G^+\leq H\times H$. By Lemma~\ref{G^+ faithful}, $G^+$ is a diagonal subgroup of $H\times H$, so   $G^+ = \Diag_\varphi(H\times H),$ for some
	$\varphi\in\Aut(H)$. 
	
	Since $G$ is transitive on $X$, there exists $g=(h_1, h_2)\tau\in G$ such that $\beta=\alpha^g$. Set $s:= (1, h_2)\in H\times H$. Then $s$ induces a graph isomorphism from $\Gamma$ to the graph $\Gamma^s$ with vertex set $X$ and arc set consisting of all pairs $(v_\varepsilon^s, w_{1-\varepsilon}^s) = (v_\varepsilon, (w^{h_2})_{1-\varepsilon})$, where $(v_\varepsilon, w_{1-\varepsilon})$ is an arc of $\Gamma$.  Moreover $(\Gamma^s, G^s)\in\OG(4)$,
	the group $G^s =\langle (\Diag_{\varphi}(H\times H))^s, g^s\rangle$, and we have 
	$(\Diag_\varphi(H\times H))^s = \Diag_{\varphi\iota_{h_2}}(H\times H)$ and 
	\[
	g^s = (1, h_2^{-1})(h_1,h_2)\tau (1,h_2) = (h_1h_2, 1)\tau.
	\]
	Set $y:= h_1h_2$. Then $g^s$ maps $\alpha^s$ to its out-neighbour $\beta^s$ in $\Gamma^s$, and we have  $\alpha^s = \alpha$, and
	$\beta^s = (\alpha^g)^s = (\alpha^{s})^{g^s} = \alpha^{g^s}  = (u_0)^{(y,1)\tau} = (u^y)_1$. 
	
	Now replace $\Gamma, G, g, \varphi, \alpha, \beta$ by  $\Gamma^s, G^s, g^s, \varphi\iota_{h_2}, \alpha, \beta^s$. Then all assertions are proved apart from the equality $\varphi^2=\iota_y$, which we now prove (for the new $\varphi)$.  Since $g=(y,1)\tau$ normalises $G^+ = \Diag_\varphi(H\times H)$, it follows that, for all $h\in H$, $G^+$ contains
	$(h,h^\varphi)^g = (h,h^\varphi)^{(y,1)\tau} = (h^\varphi, h^y)$ and hence we must have $h^y = (h^\varphi)^\varphi$ for all $h\in H$, that is to say, $\varphi^2=\iota_y$. Finally $G_\alpha = G^+_\alpha = \{(h,h^\varphi) :  \alpha^{(h, h^\varphi)} = (u^h)_0 = u_0\} \cong H_u$, and we know already that $G_\alpha$ is a 2-group. 
\end{proof}	

Now we apply the structure theorem from \cite{praeger2003finite} for biquasiprimitive groups. It turns out that only two of the various possible structures given in  Theorem 1.1 of \cite{praeger2003finite} can arise as groups of automorphisms of 4-valent oriented graphs of basic biquasiprimitive type. Note that
the stabiliser $G_\alpha = \{(h,h^\varphi)\mid h\in H_u\}\cong H_u$.  

\begin{Proposition}\label{twotypes}
	Under the assumptions of Proposition~\ref{iso}, the automorphism $\varphi$ is nontrivial, and $G$ has a unique minimal normal subgroup $N=\soc(G)$. Moreover $N=\Diag_\varphi(M\times M)\cong M$ where $M=\soc(H)\cong T^k$ for some simple group $T$ and $k\geq1$, and either
	\begin{enumerate}
		\item[(a)] $H$ is quasiprimitive and $M$ is its unique minimal normal subgroup, or
		\item[(b)] $H$ is not quasiprimitive and $M=R\times R^\varphi$ where $R, R^\varphi$ are intransitive  minimal normal subgroups of $H$. In this case $G^+$ has  two minimal normal subgroups, namely $K:=\Diag_\varphi(R\times R)$ and $L = \Diag_\varphi(R^\varphi\times R^\varphi)$, and these are the only minimal normal subgroups if $T$ is nonabelian. Moreover, $N=K\times L$ (so $k=2\ell$ and $K\cong L\cong R\cong T^\ell$). 
	\end{enumerate}
\end{Proposition}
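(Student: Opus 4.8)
My plan is to read off the socle of $G$ from the concrete form $G^+=\Diag_\varphi(H\times H)$ of Proposition~\ref{iso} together with the biquasiprimitive structure theorem \cite[Theorem 1.1]{praeger2003finite}, and to use the combinatorics of a connected $4$-valent oriented graph (notably that $G_\alpha\cong H_u$ is a $2$-group) to discard all but two of its possible outputs. The first thing I would record is a dictionary between normal subgroups of $G$ inside $G^+$ and $\varphi$-invariant normal subgroups of $H$: for $\bar R\trianglelefteq H$ the ``lift'' $\{(r,r^\varphi):r\in\bar R\}\leq G^+$ is normalised by $G^+$, and conjugation by $g$ sends it to the lift of $\bar R^\varphi$ (this is the computation $(h,h^\varphi)^g=(h^\varphi,h^y)$ already used in Proposition~\ref{iso}); hence it is normal in $G$ exactly when $\bar R$ is $\varphi$-invariant. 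Combining this with Lemma~\ref{G^+ faithful}, the basic biquasiprimitive hypothesis becomes the single condition $(\star)$: every nontrivial $\varphi$-invariant normal subgroup of $H$ is transitive on $V$, since such a subgroup lifts to a nontrivial intransitive normal subgroup of $G$ contained in $G^+$, whose only permissible orbits are $\Delta$ and $\Delta'$.

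Next I would show $\varphi\neq 1$. If $\varphi=1$ then, since $g$ induces $\varphi$ on $G^+\cong H$ by conjugation, $g$ centralises $G^+$, so $\langle g\rangle\trianglelefteq G$; as $G$ is transitive, $\langle g\rangle$ is semiregular. Because $g\notin G^+$ but $g^2\in G^+$, the order of $g$ is even, so $\langle g\rangle$ contains an involution $z$ with $\langle z\rangle\trianglelefteq G$, and $\langle z\rangle$ is a semiregular normal subgroup of order $2$ with $|V\Gamma|/2\geq 4$ orbits. This contradicts biquasiprimitivity, so $\varphi\neq 1$.

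Now I would invoke the structure theorem. Since $G$ is not quasiprimitive it has a nontrivial intransitive normal subgroup, and the minimal normal subgroup it contains is again intransitive, hence by Lemma~\ref{G^+ faithful} lies in $G^+$ with orbits $\Delta,\Delta'$; thus we are in the branch of \cite[Theorem 1.1]{praeger2003finite} where the socle has the two biparts as its orbits, and where that branch already provides a unique minimal normal subgroup. Matching this branch to $G^+=\Diag_\varphi(H\times H)$ identifies $\soc(G)$ with $N:=\Diag_\varphi(M\times M)$, where $M=\soc(H)$ is characteristic, hence $\varphi$-invariant and (by $(\star)$) transitive, and $M\cong T^k$ for a simple group $T$. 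Using $4$-valence and the fact that $G_\alpha\cong H_u$ is a $2$-group to rule out the remaining output types then leaves exactly two possibilities, according to whether $M$ is a single minimal normal subgroup of $H$ or not. I expect this elimination --- transcribing the abstract socle types of the theorem into the present diagonal language and excluding every type but these two --- to be the main obstacle, with the leverage coming from $(\star)$ and the $2$-group stabiliser.

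Finally I would extract the fine structure. If $M$ is the unique minimal normal subgroup of $H$ then, as $M$ is transitive, every nontrivial normal subgroup of $H$ contains $M$ and is therefore transitive, so $H$ is quasiprimitive: this is case~(a). Otherwise $H$ has an intransitive minimal normal subgroup $R$; by $(\star)$ it is not $\varphi$-invariant, so $R^\varphi\neq R$, $R\cap R^\varphi=1$, and $R\times R^\varphi$ is $\varphi$-invariant, hence transitive, which forces $M=R\times R^\varphi$ and so $k=2\ell$ with $R\cong R^\varphi\cong T^\ell$. Then $K:=\Diag_\varphi(R\times R)$ and $L:=\Diag_\varphi(R^\varphi\times R^\varphi)$ are minimal normal subgroups of $G^+$ interchanged by $g$, with $N=K\times L$: this is case~(b), and the statement that $K,L$ are then the only minimal normal subgroups of $G^+$ when $T$ is nonabelian is part of the corresponding output of \cite[Theorem 1.1]{praeger2003finite}. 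In both cases $N=\Diag_\varphi(M\times M)$ is $g$-invariant, and since the minimal normal subgroups of $G^+$ contained in it ($N$ itself in case~(a), or $K$ and $L$ in case~(b)) are permuted transitively by $g$, it has no proper nontrivial $G$-invariant subgroup; hence $N=\soc(G)$ is the unique minimal normal subgroup of $G$.
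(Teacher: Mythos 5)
Your overall strategy --- the diagonal dictionary between normal subgroups of $G$ inside $G^+$ and $\varphi$-invariant normal subgroups of $H$, the resulting condition $(\star)$, and the use of the $2$-group stabiliser together with faithfulness of $G^+$ on $\Delta$ to prune the cases of the biquasiprimitive structure theorem --- matches the paper's, and your argument that $\varphi\neq 1$ (a cyclic normal subgroup $\langle g\rangle$ of a faithful transitive group is semiregular, so it would contain a normal semiregular involution with $|X|/2\geq 4$ orbits) is a correct variant of the paper's centraliser computation. The genuine gap is exactly the step you defer as ``the main obstacle'': your final dichotomy is ``either $M=\soc(H)$ is the unique minimal normal subgroup of $H$, or $H$ has an \emph{intransitive} minimal normal subgroup $R$'', and this omits the third possibility that $H$ has two or more minimal normal subgroups all of which are transitive. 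Condition $(\star)$ is powerless against that case (it only constrains $\varphi$-invariant \emph{intransitive} subgroups), your case (b) argument would collapse there at ``by $(\star)$ it is not $\varphi$-invariant'', and the claimed uniqueness of $\soc(G)$ could fail. The paper closes this case with a specific extra input you never invoke: in the relevant branch $H$ is quasiprimitive with stabiliser $H_u\cong G_\alpha$ a $2$-group, and the O'Nan--Scott theorem for quasiprimitive groups in \cite{praeger1993nan} shows that a quasiprimitive group with more than one minimal normal subgroup has non-solvable point stabiliser. That is the missing idea.

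Two smaller issues in the same vein. In your case (b), transitivity of $R\times R^\varphi$ does not by itself ``force $M=R\times R^\varphi$'': nothing you have said prevents $\soc(H)$ from properly containing $R\times R^\varphi$ (the paper reads $\soc(H)=R\times R^\varphi$ off from case (c)(i) of \cite[Theorem 1.1]{praeger2003finite} rather than deriving it). And your closing sentence only shows that $N=\Diag_\varphi(M\times M)$ is \emph{a} minimal normal subgroup of $G$; uniqueness requires in addition that every minimal normal subgroup of $G$ lies in $G^+$ and contains $K$ or $L$, which again demands control of \emph{all} minimal normal subgroups of $H$ --- precisely what the deferred elimination was supposed to deliver, and precisely where the paper, for abelian $T$, needs a separate composition-factor argument.
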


\begin{proof}
	We examine the possibilities for the structure of $G$ given in \cite[Theorem 1.1]{praeger2003finite}. Since $G_\alpha$ is a 2-group, cases (b) and (c)(ii) do not arise, and since $G^+$ is faithful on $\Delta$, the possible cases are (a)(i) and (c)(i). Consider first case (a)(i). Since $|X|>4$, the element $g = (y,1)\tau$ does not centralise $G^+$. A straightforward computation shows that $C_{G^+}(g)$ consists of all pairs $(h,h^\varphi)$ such that $h\in C_H(\varphi)$. Thus $\varphi$ is nontrivial. Moreover in case (a)(i), $H$ is quasiprimitive on $V$ and the stabiliser $H_u\cong G_\alpha$ is a 2-group. 
	
	We now apply the O'nan-Scott Theorem for quasiprimitive groups from \cite{praeger1993nan}. This theorem tells us that if $H$ has more than one minimal normal subgroup then the stabilizer $H_u$ is not solvable. Thus $H$ has a unique minimal normal subgroup $M=\soc{H}\cong T^k$ where $T$ is a simple group and $k\geq 1$. Now $G^+$ has a minimal normal subgroup $N=\Diag_\varphi(M\times M)\cong M$ and since $G^+ \cong H$ it follows that $N$ is the unique minimal normal subgroup of $G^+$.
	
	It remains to consider case (c)(i). Here again, $G$ has a  unique minimal normal subgroup $N=\Diag_\varphi(M\times M)$, but in this case $M=\soc(H)=R\times R^\varphi$ where $R, R^\varphi$ are intransitive minimal normal subgroups of $H$. In particular $\varphi$ is nontrivial, and $R\cong R^\varphi\cong T^\ell$ and $M\cong T^k$ with $k=2\ell$. Here $K$ and $L$, as in part (b),  are the minimal normal subgroups of $G^+$, and are interchanged by $g$ (noting that, for $(h,h^\varphi)\in K$, the conjugate $(h,h^\varphi)^g = (h^\varphi, h^y)\in L$, since $\varphi^2=\iota_y$, and vice versa).

		If $T$ is nonabelian then since $R$ is a minimal normal subgroup of $H$, it follows that $H$ permutes the simple direct factors of $R$ (and $R^\varphi$) transitively. Hence these are the only minimal normal subgroups of $H$, and $K$ and $L$ are the only minimal normal subgroups of $G^+$. 
		
		On the other hand, if $T=C_p$ then as an $H_u$-module, $M$ has two composition factors each isomorphic to $R$. In particular, $H$ may have other minimal normal subgroups. However, for any such subgroup $S$ we have $S\cong R$ as there are just two composition factors and both are isomorphic to $R$. Also since $N$ is the unique minimal normal subgroup of $G$ it follows that $M = S \times S^\varphi$ also. 
\end{proof}

In summary if $(\Gamma, G)\in \OG(4)$ is basic and biquasiprimitive, then $N:= \soc(G)$ is the unique minimal normal subgroup of $G$, and is contained in $G^+$. In particular $N$ is transitive on the two $G^+$-orbits $\Delta$ and $\Delta^+$, and since $G_\alpha = G^+_\alpha$, it follows that $G^+ = NG_\alpha$.

Using the framework of Proposition~\ref{iso}, we can specify the neighbours of $\alpha = u_0$ and of $\alpha^{g^{-1}}=u_1$. 
We denote by $\Gamma_{out}(\gamma)$ and $\Gamma_{in}(\gamma)$ the 2-subsets of out-neighbours and in-neighbours of a vertex $\gamma$, respectively. Each of these two sets is an orbit of the stabiliser $G_\gamma$, and we can always choose an element of $G_\gamma$ that acts fixed-point-freely on $\Gamma(\gamma)$ (whether the induced group has order 2 or 4). For the vertex $\alpha$, such an element is of the form $(z^{\varphi^{-1}},z)$ for some $z\in (H_u)^\varphi$.  Since we did not specify above, let us now decide that the vertex $\beta=(u^y)_1$ in Proposition~\ref{iso} lies in $\Gamma_{in}(\alpha)$.

\begin{Lemma}\label{neighbours}
	Use the notation of Proposition~\ref{iso} (in particular that $g=(y,1)\tau$ and $\alpha=u_0$), and let $(z^{\varphi^{-1}},z)\in G_\alpha$ be  fixed-point-free on $\Gamma(\alpha)$,  for some $z\in (H_u)^\varphi$. Then
	\begin{enumerate}
		\item[(a)]  $\Gamma_{in}(\alpha)=\{  (u^y)_1, (u^{yz})_1 \}$ and $\Gamma_{out}(\alpha) = \{ u_1, (u^z)_1 \}$; and
		\item[(b)] for $\gamma :=\alpha^{g^{-1}} = u_1$,  $\Gamma_{in}(\gamma)=\{  u_0, (u^{yzy^{-1}})_0 \}$ and $\Gamma_{out}(\gamma) = \{ (u^{y^{-1}})_0, (u^{zy^{-1}})_0 \}$.
	\end{enumerate}
\end{Lemma}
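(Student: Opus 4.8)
The plan is to carry out a direct computation within the wreath-product framework of Proposition~\ref{iso}, using two facts. First, the explicit action of $H\wr\Sym(2)$ on $X$: a pair $(h_1,h_2)$ sends $v_0\mapsto (v^{h_1})_0$ and $v_1\mapsto(v^{h_2})_1$, while $\tau$ interchanges $v_0$ and $v_1$; reading products left to right (so that indeed $\alpha^{(y,1)\tau}=(u^y)_1=\beta$), we have $g^{-1}=\tau(y^{-1},1)$, and one checks $\gamma=\alpha^{g^{-1}}=u_1$. Second, since $\Gamma_{in}(\alpha)$ and $\Gamma_{out}(\alpha)$ are the two $G_\alpha$-orbits on $\Gamma(\alpha)$, each of size $2$, the fixed-point-free element $(z^{\varphi^{-1}},z)\in G_\alpha$ preserves each of these orbits and, having no fixed point, must interchange the two vertices in each.

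For part (a), I would start from the chosen in-neighbour $\beta=(u^y)_1\in\Gamma_{in}(\alpha)$ and apply $(z^{\varphi^{-1}},z)$, whose second coordinate $z$ governs the action on index-$1$ vertices, to obtain the second in-neighbour $(u^{yz})_1$; this gives $\Gamma_{in}(\alpha)=\{(u^y)_1,(u^{yz})_1\}$. To pin down $\Gamma_{out}(\alpha)$ I would first show that $\gamma=u_1$ is an \emph{out}-neighbour of $\alpha$. Since $\alpha^{g^{-1}}=\gamma$ and $\{\alpha,\beta\}$ is an edge with $\beta=\alpha^g$, applying $g^{-1}$ shows $\{\gamma,\alpha\}$ is an edge; and because $g^{-1}\in G$ preserves the orientation and carries the arc $(\beta,\alpha)$ to $(\alpha,\gamma)$, the edge $\{\alpha,\gamma\}$ is oriented from $\alpha$, so $\gamma=u_1\in\Gamma_{out}(\alpha)$. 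Applying $(z^{\varphi^{-1}},z)$ to $u_1$ then yields the second out-neighbour $(u^z)_1$, giving $\Gamma_{out}(\alpha)=\{u_1,(u^z)_1\}$.

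For part (b), the key observation is that $g^{-1}\in G$ is orientation-preserving with $\alpha^{g^{-1}}=\gamma$, so it maps $\Gamma_{in}(\alpha)$ bijectively onto $\Gamma_{in}(\gamma)$ and $\Gamma_{out}(\alpha)$ onto $\Gamma_{out}(\gamma)$. It then remains to evaluate the images of the four neighbours found in part (a) under $g^{-1}=\tau(y^{-1},1)$: applying $\tau$ first moves a vertex from index $1$ to index $0$, after which the first coordinate $y^{-1}$ acts. This sends $\{(u^y)_1,(u^{yz})_1\}$ to $\Gamma_{in}(\gamma)=\{u_0,(u^{yzy^{-1}})_0\}$ and $\{u_1,(u^z)_1\}$ to $\Gamma_{out}(\gamma)=\{(u^{y^{-1}})_0,(u^{zy^{-1}})_0\}$, as claimed.

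The computations themselves are routine once the action conventions are fixed; the one step demanding genuine care is the orientation bookkeeping that identifies $u_1$ as an out-neighbour (rather than an in-neighbour) of $\alpha$, which rests on tracking the arc $(\beta,\alpha)$ through the orientation-preserving map $g^{-1}$. Everything else is a matter of consistently recording how $(h_1,h_2)$ and $\tau$ permute the indexed copies of $V$.
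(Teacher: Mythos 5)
Your proposal is correct and follows essentially the same route as the paper's proof: fix the convention $\beta=(u^y)_1\in\Gamma_{in}(\alpha)$, use the fixed-point-free stabiliser element $(z^{\varphi^{-1}},z)$ to produce the second vertex of each $G_\alpha$-orbit on $\Gamma(\alpha)$, and transport everything to $\gamma=u_1$ via the orientation-preserving element $g^{-1}=(1,y^{-1})\tau$. The orientation bookkeeping you single out (that $u_1$ is an out-neighbour of $\alpha$, obtained by tracking the arc $(\beta,\alpha)$ through $g^{-1}$) is exactly the step the paper handles by applying $g^{-1}$ to $\{\alpha\}\cup\Gamma_{in}(\alpha)$ and reading off $u_0\in\Gamma_{in}(u_1)$.
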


\begin{proof}
	As mentioned above, we assume that  the vertex $\beta=\alpha^g=(u^y)_1$ in Proposition~\ref{iso} lies in $\Gamma_{in}(\alpha)$. As 
	$(z^{\varphi^{-1}},z)\in G_\alpha$ is  fixed-point-free on $\Gamma(\alpha)$, the second vertex in  $\Gamma_{in}(\alpha)$ is $\beta^{(z^{\varphi^{-1}},z)} = (u^{yz})_1$. Note that $g^{-1} = (1,y^{-1})\tau$. Applying $g^{-1}$ to $\{\alpha\}\cup\Gamma_{in}(\alpha)$ we find first that $\alpha^{g^{-1}}= u_1$ and then that $\Gamma_{in}(u_1)$ consists of the vertices $(u^y)_1^{g^{-1}} = u_0$ and $(u^{yz})_1^{g^{-1}} = (u^{yzy^{-1}})_1$. In particular $u_1\in\Gamma_{out}(u_0)$ and the second vertex in this set is therefore   $u_1^{(z^{\varphi^{-1}},z)} = (u^{z})_1$.
	This completes the proof of part (a). 
	Finally applying $g^{-1}$ to  $\{\alpha\}\cup\Gamma_{out}(\alpha)$ we find that  $\Gamma_{out}(u_1)$ consists of the vertices  $(u)_1^{g^{-1}} = (u^{y^{-1}})_0$ and $(u^{z})_1^{g^{-1}} = (u^{zy^{-1}})_0$.
\end{proof}
\section{Biquasiprimitive Basic Pairs: restricting the socle.}\label{SecRestricting}
We will now show that for any biquasiprimitive basic pair $(\Gamma, G) \in \OG(4)$, the unique minimal normal subgroup $N$ of $G$ is a direct product of $k$ finite simple groups where $k$ takes one of only several possible values depending on the structure of $G$. 
We deduce these values of $k$ by separately considering the cases when $N$ is abelian and nonabelian.
%The three possibilities for the group $N$ in  Theorem \ref{MainResult} follow immediately from Proposition \ref{twotypes} together with these restrictions on $k$.

We first consider the case where the minimal normal subgroup $N = \soc(G)$ is abelian. Since $N$ is contained in $G^+$, this implies that $N$ acts transitively and hence regularly on $\Delta$ (and $\Delta')$. In particular, $\Gamma$ is a bi-Cayley graph over $N$, that is, $\Gamma = \BiCay(N, \emptyset, \emptyset, S)$, and  $N = C_p^k$ for some $k \geq 1$.

\begin{Lemma}\label{abelianBound}
	Let $(\Gamma, G) \in \OG(4)$ be basic of biquasiprimitive type and suppose that $N = \soc(G)$ is abelian. Then $N = C_p^k$ with $k \leq 2$ and $p$ an odd prime.
\end{Lemma}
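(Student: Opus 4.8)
The plan is to pin down the structure by exploiting how tightly the $2$-group vertex stabiliser $P:=G_\alpha$ can act, and then to read off $k$ and $p$ from the module structure of $N$.

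First I would set up coordinates. Since $N$ is abelian it is regular on each bipart, so by the remark preceding the statement $\Gamma=\BiCay(N,\emptyset,\emptyset,S)$ with $N=C_p^k$; identifying $N$ with the $\mathbb{F}_p$-space $V=\mathbb{F}_p^k$ and $\alpha=u_0$ with the origin, the relation $G^+=NG_\alpha$ together with the regularity of $N$ gives $G^+=N\rtimes P$ with $P\cong G_\alpha$ a nontrivial $2$-group acting faithfully and $\mathbb{F}_p$-linearly on $V$, i.e.\ $P\le \mathrm{GL}(k,p)$ (faithfulness is Lemma~\ref{G^+ faithful}, and $P\ne1$ because $G_\alpha$ moves the neighbours of $\alpha$). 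All four edges at a vertex are spokes, so $|S|=4$, and connectedness of $\Gamma$ forces the four neighbour coordinates of $\alpha$ (listed explicitly in Lemma~\ref{neighbours}) to \emph{affinely span} $V$.

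The decisive step is to show that $P$ acts \emph{faithfully} on the neighbourhood $\Gamma(\alpha)$. An element of $P$ fixing $\Gamma(\alpha)$ pointwise acts on $\Delta'$ as an affine transformation fixing an affine-spanning set, hence trivially on $\Delta'$; since $G^+$ is faithful on $\Delta'$ by Lemma~\ref{G^+ faithful}, the element is trivial. Thus $P$ embeds into its permutation action on the four neighbours, and since it preserves its two orbits $\Gamma_{out}(\alpha)$ and $\Gamma_{in}(\alpha)$ (each of size $2$) it embeds into $\Sym(2)\times\Sym(2)\cong C_2\times C_2$. Consequently $P$ is elementary abelian of order $2$ or $4$. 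I expect this faithfulness argument to be the main obstacle: it is where the explicit neighbour description of Lemma~\ref{neighbours} and the connectivity (spanning) condition must be combined carefully.

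With $P$ so constrained, the conclusions follow from elementary module theory, using Proposition~\ref{twotypes} to describe $N$ as a $P$-module: either $N$ is itself a minimal normal subgroup of $G^+$ (case~(a)), or $N=K\oplus L$ is the direct sum of the two minimal normal subgroups of $G^+$ (case~(b)); in both situations each summand is an irreducible $\mathbb{F}_p[P]$-module, because $N$ is abelian (so acts trivially on itself) and a minimal normal subgroup of $G^+$ lying in $N$ is precisely an irreducible $P$-submodule. To obtain that $p$ is odd I would argue by contradiction: if $p=2$ then, as $P$ is a $2$-group, its only irreducible $\mathbb{F}_2$-module is trivial, forcing $P$ to act trivially on each summand and hence on $N$, contradicting faithfulness and $P\ne1$. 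Finally, for $p$ odd the exponent-$2$ group $P$ acts by commuting diagonalisable involutions (eigenvalues in $\{\pm1\}\subseteq\mathbb{F}_p$), so it is simultaneously diagonalisable and every irreducible $\mathbb{F}_p[P]$-module is $1$-dimensional. Hence in case~(a) $N$ is $1$-dimensional ($k=1$), while in case~(b) $\dim K=\dim L=1$ gives $k=2$; in all cases $k\le2$, as required.
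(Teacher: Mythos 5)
Your argument is correct, but it reaches the conclusion by a genuinely different route from the paper's. The paper eliminates $k=4$ by an explicit path-parity computation (if the four elements of $S$ were linearly independent, the alternating sum along any odd-length path from $(0_N)_0$ to $(0_N)_1$ could not vanish, contradicting connectedness), and it eliminates $k=3$ and then $p=2$ by a counting argument: in case (a) of Proposition~\ref{twotypes} quasiprimitivity of $G^+$ on $\Delta$ forbids any $G_\alpha$-invariant proper nontrivial subgroup of $N$, yet the $2$-group $G_\alpha$ must fix one of the odd number ($p^2+p+1$, resp.\ $p+1$) of order-$p$ subgroups. Your route instead extracts a structural fact first: connectedness of $\BiCay(N,\emptyset,\emptyset,S)$ is \emph{equivalent} to $S$ spanning $N$ affinely (the component of $(0_N)_0$ meets $\Delta$ exactly in $\langle S-S\rangle$), the action of $G_\alpha$ on $\Delta'$ is affine because $G^+$ normalises the regular group $N^{\Delta'}$, and hence $G_\alpha$ is faithful on $\Gamma(\alpha)$ and embeds in $C_2\times C_2$. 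Feeding this into Proposition~\ref{twotypes} — minimal normal subgroups of $G^+=N\rtimes G_\alpha$ inside the abelian $N$ are exactly the irreducible $\mathbb{F}_p[G_\alpha]$-submodules — then yields everything at once, since irreducible $\mathbb{F}_p$-modules of an elementary abelian $2$-group are $1$-dimensional for odd $p$ and trivial for $p=2$. The delicate step you flagged does go through: an affine map of $\mathbb{F}_p^k$ fixing an affinely spanning set pointwise is the identity, and Lemma~\ref{G^+ faithful} finishes the faithfulness claim. What your approach buys is more information ($|G_\alpha|\le 4$; case (a) forces $k=1$ and case (b) forces $k=2$) and a uniform treatment of $p=2$ in both cases of Proposition~\ref{twotypes}, whereas the paper's $k=2$ parity argument leans on quasiprimitivity of $G^+$, which is only available in case (a); what the paper's approach buys is that it is entirely elementary, needing no representation theory and no identification of the affine structure on $\Delta'$.
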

\begin{proof}
	Since $N = C_p^k$ is an abelian  normal subgroup of $G$ contained in $G^+$, $N$ is regular on the two $G^+$-orbits $\Delta$ and $\Delta'$, and $\Gamma \cong\BiCay(N, \emptyset, \emptyset, S)$, for some subset $S \subseteq N$ as defined in Subsection \ref{ssBiCay}.
	We can view $\Delta$ and $\Delta'$ as two copies of the group $N$, so $\Delta = N_0$ and $\Delta' = N_1$, with each vertex $n_0 \in \Delta$ adjacent to $(n + s)_1 \in \Delta'$, where $s \in S$. Since $\Gamma$ is connected and 4-valent, it follows that $\langle S \rangle = N$ and $|S| = 4$, in particular $k \leq 4$.  
	
	Suppose first that $k = 4$ and $S = \{s_1, s_2, s_3, s_4\} \subseteq N$. We may view $N = C_p^4$ as a 4-dimensional vector space over the finite field $\mathbb{F}_p.$ Since $S$ generates $N$, it follows that the elements of $S$ viewed as vectors of this  space are linearly independent. 
	
	Since $\Gamma$ is connected, there is a path from $(0_N)_0 \in \Delta$ to $(0_N)_1 \in \Delta'$. Moreover, this path has odd length since $\Delta$ and $\Delta'$ are independent sets. 
	Let $P$ be a path from $(0_N)_0$  to $(0_N)_1$, then $P$ is of the form 
	$$(0_N)_0, (t_1)_1, (t_1-t_2)_0, (t_1-t_2+t_3)_1,...,(t_1-t_2+t_3 - ...+t_r)_1 = (0_N)_1$$
	where each $t_i \in S$ for $1\leq i\leq r$. In particular, since $P$ has odd length, $r$ is an odd integer.
	
	Now consider the expression $t_1-t_2+t_3 - ...+t_r = 0_N$. For each $j$ with $1\leq j\leq 4$, let $\alpha_j$ be the number of odd $i$ such that $t_i = s_j$, and let $\beta_j$ be the number of even $i$ such that $t_i = s_j$. Notice that since the length of the path $P$ is odd, the sum $\sum_{j=1}^4 \alpha_j$ is equal to exactly $1 + \sum_{j=1}^4 \beta_j$, and so $ \sum_{j=1}^{4} (\alpha_j-\beta_j) =1 $ (an equation over the integers $\mathbb{Z}$).
	
	On the other hand since $t_1-t_2+t_3 - ...+t_r = 0_N$ (an equation in the group $N$), we get that 
	$$0_N = \sum_{j=1}^{4} (\alpha_j-\beta_j)s_j,$$
	and since the elements $s_j$ of $S$ are linearly independent, we know that  $\alpha_j \equiv \beta_j$ mod $p$, for each $j$. Hence
	$$0 \equiv \sum_{j=1}^{4} (\alpha_j-\beta_j)\mod p,$$ contradicting the fact that $ \sum_{j=1}^{4} (\alpha_j-\beta_j) =1$, thus $k\neq 4$.
	
	Next suppose that $N = C_p^k$ with $k= 3$. Since $k$ is odd, it follows by Proposition \ref{twotypes} that $G^+ = NG_\alpha$ is quasiprimitive on $\Delta = N$. In particular since $N$ is regular on $\Delta$, no proper non-trivial subgroup of $N$ is normal in $G^+$. Since $N$ acts trivially on itself by conjugation, this implies that conjugation by $G_\alpha$ fixes no proper non-trivial subgroup of $N$.
	
	However, $G_\alpha$ is a 2-group, and $N$ has exactly $p^2 + p +1$ subgroups of order $p$. Since this number is odd, some subgroup must be left fixed under conjugation by $G_\alpha$ and hence must be normal in $G^+$, a contradiction. Therefore $k \leq 2$.
	
	To see that $p$ must be odd notice that if $k = 2$ then again conjugation by $G_\alpha$ cannot fix any of the $p+1$  subgroups of $N$ of order $p$ implying that $p \neq 2$. On the other hand, if $k = 1$ then the fact that $|V\Gamma|>4$ implies that $N \neq C_2$.
\end{proof}

The next lemma concerns the case when $N =\soc(G)$ is nonabelian. The proof develops ideas used to prove a similar result for quasiprimitive basic pairs in \cite[Lemma 6.2]{al2015finite}.

\begin{Lemma}\label{nonabelianBound}
	Let $(\Gamma, G) \in \OG(4)$ be basic of biquasiprimitive type and suppose that $N = \soc(G)$ is nonabelian.  Then  either
	\begin{enumerate}[(a)]
		\item  $N$ is a minimal normal subgroup of $G^+$ and $N = T^k$, for some nonabelian simple group $T$ and $k \in \{1,2,4\}$; or
		\item  $N = K \times K^g$ where $g\in G \backslash G^+$, and $K = T^\ell$ is a minimal normal subgroup of $G^+$ with $T$ a nonabelian simple group and $\ell \in \{1,2,4\}$. In particular, $N\cong T^k$ with $k = 2 \ell$.
	\end{enumerate}
\end{Lemma}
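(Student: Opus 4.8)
The plan is to show that, for nonabelian $N$, the present lemma is exactly the nonabelian-socle case of Proposition~\ref{twotypes} together with a bound on the number of simple direct factors, and then to obtain that bound by a local analysis of the $2$-group $G_\alpha$ acting on these factors. First I would invoke Proposition~\ref{twotypes} directly. Since $N=\soc(G)$ is nonabelian, the simple group $T$ with $N\cong T^k$ is nonabelian, so the two alternatives of Proposition~\ref{twotypes} specialise to the two alternatives here: in case~(a) the subgroup $N$ itself is the unique minimal normal subgroup of $G^+$, while in case~(b) we have $N=K\times L$ with $K,L$ the two minimal normal subgroups of $G^+$, each isomorphic to $T^\ell$ and interchanged by $g$, so that $L=K^g$ and $k=2\ell$. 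Thus everything in the statement except the numerical bounds $k\in\{1,2,4\}$ and $\ell\in\{1,2,4\}$ is already delivered. This reduces the lemma to the uniform claim: if $W\cong T^d$ is a minimal normal subgroup of $G^+$ with $T$ nonabelian simple (take $W=N$, $d=k$ in case~(a), and $W=K$, $d=\ell$ in case~(b)), then $d\in\{1,2,4\}$.

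The second step is to show that $d$ is a power of $2$. Recall from the summary following Proposition~\ref{twotypes} that $G^+=NG_\alpha$, and from Proposition~\ref{iso} that $G_\alpha=G^+_\alpha$ is a $2$-group. Since $W$ is a minimal normal subgroup of $G^+$, the group $G^+$ permutes the simple direct factors $T_1,\dots,T_d$ of $W$ transitively by conjugation. Conjugation by $N$ fixes each $T_i$ setwise: in case~(a) because $T_i\lhd N=W$, and in case~(b) because the complementary factor $L$ centralises $W=K$. Hence the action of $G^+$ on $\{T_1,\dots,T_d\}$ factors through the quotient $G^+/N$, which is a quotient of the $2$-group $G_\alpha$, so $G_\alpha$ already acts transitively on the $d$ factors. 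Being a transitive $2$-group, it forces $d=2^m$ for some $m\ge 0$.

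The main obstacle is to bound the exponent, namely to prove $m\le 2$; this is where the valency-$4$ oriented structure must enter, and where I would adapt the argument of \cite[Lemma~6.2]{al2015finite} for the quasiprimitive case. The key local input is Lemma~\ref{neighbours}: the element $(z^{\varphi^{-1}},z)\in G_\alpha$ is fixed-point-free on $\Gamma(\alpha)$ and preserves the two orbits $\Gamma_{in}(\alpha),\Gamma_{out}(\alpha)$, so the induced permutation group $G_\alpha^{\Gamma(\alpha)}$ embeds in $\Sym(\Gamma_{in}(\alpha))\times\Sym(\Gamma_{out}(\alpha))\cong C_2\times C_2$ and contains this fixed-point-free element; since $G_\alpha$ is transitive on each of the two size-$2$ orbits, $G_\alpha^{\Gamma(\alpha)}$ has order exactly $2$ or $4$. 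I would combine this bounded local action with the regularity of $G$ on oriented $s$-arcs (Lemma~\ref{regular_s_arcs}), which fixes $|G_\alpha|=2^s$ and the descending chain of arc-stabilisers, and with the projections $\pi_i(W_\alpha)\le T_i$ of the socle-point-stabiliser $W_\alpha=W\cap G_\alpha$, which $G_\alpha$ permutes in parallel with the factors $T_i$.

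The crux is then to show that the transitive $2$-group that $G_\alpha$ induces on the $d$ factors cannot outgrow the order-$\le 4$ action it induces near $\alpha$, forcing $2^m=d\le 4$; the semiregular case $W_\alpha=1$ (so $W$, and in case~(a) the transitive socle $N$, acts regularly on $\Delta$) would be treated separately via the bi-Cayley description exactly as in the proof of Lemma~\ref{abelianBound}. I expect this coordination between the \emph{global} factor-permutation of $G_\alpha$ and its constrained \emph{local} action to be the delicate point; it is precisely where the biquasiprimitive case permits $d=4$ rather than the value $d\le 2$ obtained in the quasiprimitive setting, and it is the heart of the argument, mirroring \cite[Lemma~6.2]{al2015finite}.
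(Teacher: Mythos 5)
Your first two steps are sound and match the paper: the dichotomy (a)/(b) is read off from Proposition~\ref{twotypes}, and the observation that a quotient of the $2$-group $G_\alpha$ acts transitively on the $d$ simple factors of $W$ (so $d$ is a power of $2$) is correct, though the paper does not even need it as a separate step. The genuine gap is exactly where you flag it: you never supply an argument that $d\le 4$, and the heuristic you offer --- that the transitive $2$-group induced by $G_\alpha$ on the $d$ factors ``cannot outgrow'' the order-$\le 4$ local action $G_\alpha^{\Gamma(\alpha)}$ --- is not a viable mechanism. The kernel of the action of $G_\alpha$ on $\Gamma(\alpha)$ can be arbitrarily large ($|G_\alpha|=2^s$ with $s$ unbounded), and there is no a priori comparison between the permutation group induced on the socle factors and the permutation group induced on the four neighbours. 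The paper's actual argument is generation-theoretic, not local: writing $G\le\Aut(T)\wr\Sym(k)$ with projection $\pi$ to $\Sym(k)$, it takes $h_1$ generating the order-$2$ stabiliser of an oriented $(s-1)$-arc, an element $g\in G\setminus G^+$ shifting the $s$-arc, and sets $h_i:=h_{i-1}^{g^{-1}}$, so that $P:=(G_\alpha)\pi$ is generated by the $\tau$-conjugates $\sigma,\sigma^{\tau^{-1}},\dots,\sigma^{\tau^{-(s-1)}}$ of the single involution $\sigma=(h_1)\pi$, where $\tau=(g)\pi$. Since $P$ is a $2$-group normalised by $\tau$ with $\tau^2\in P$, and $\sigma$ lies in no proper $\tau$-invariant subgroup of $P$, a Frattini-quotient argument (the images of $\langle\sigma\rangle$ and $\langle\sigma^{\tau^{-1}}\rangle$ generate $P/\Phi(P)$, forcing $P/\Phi(P)\cong C_2^c$ with $c\le2$) yields $P=\langle h_1,h_2\rangle\pi$; regularity on oriented $s$-arcs (Lemma~\ref{regular_s_arcs}) gives $|\langle h_1,h_2\rangle|=4$, hence $|P|$ divides $4$, and $k$ (resp.\ $\ell$) divides $|P|$ by transitivity (resp.\ the two orbits of length $\ell$). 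None of this chain is recoverable from what you wrote.

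A secondary problem: your proposed separate treatment of the semiregular case $W_\alpha=1$ ``exactly as in the proof of Lemma~\ref{abelianBound}'' does not transfer. That proof is linear algebra over $\mathbb{F}_p$ (linear independence of the connection set, counting the $p^2+p+1$ subgroups of order $p$) and depends essentially on $N$ being elementary abelian; for nonabelian $T$ there is no analogous module structure. The paper's argument needs no such case split --- it is uniform in $W_\alpha$.
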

\begin{proof}
	
	Let $(\Gamma, G) \in \OG(4)$ be as in the statement of the theorem and suppose that $N= \soc(G)$ is nonabelian. The possible cases (a) and (b) here correspond directly to the two cases of Proposition \ref{twotypes}. The group $K$ in case (b) is the subgroup 	$K := \{(r, r^\varphi)  :  r\in R\}$ of Proposition \ref{twotypes},
 and so $K^g = \{(r^\varphi, r^y)  :  r\in R\}$, where $R$ is an intransitive minimal normal subgroup of $H$.
	
	Since $N = \soc(G)$ is nonabelian, it follows that $N$ is a direct product of isomorphic nonabelian simple groups $T$. In particular, $N = T^k$ for $k\geq1$, and in case (b), $k = 2\ell$ where $K = T^\ell$ and $\ell \geq 1$.
	We will now show that $k$ divides $ 4$ in case (a) and $\ell$ divides  $ 4$ in case (b). As $N = \soc(G)$, we will identify $N$ with its group of inner automorphisms Inn($N$), and regard $G$ as a subgroup of $\Aut(N) \cong \Aut(T) \wr \Sym(k)$. The representations of elements will therefore be different from Proposition \ref{twotypes}.
	
	Let $s\geq 1$ be the largest integer such that $G$ acts transitively on the oriented $s$-arcs of $\Gamma$. By Lemma \ref{regular_s_arcs}, this implies that $G$ is regular on the oriented $s$-arcs of $\Gamma$. Consider now an  oriented $s$-arc $(v_0,v_1,...,v_s)$ of $\Gamma$, and suppose that the pointwise stabilizer $ G_{{v_0},...,{v_{s-1}}} $ of order 2 is generated by the element $h_1$, that is, $G_{{v_0},...,{v_{s-1}}} = \langle h_1 \rangle \cong C_2$.
	
	Now let $g \in G\backslash G^+$ be an automorphism of $\Gamma$ taking the oriented $s$-arc $(v_0,v_1,...,v_s)$ to the oriented $s$-arc $(v_1, v_2,...,v_s, v_{s+1})$ where $v_{s+1}$ is some out-neighbour of $v_s$. For each $2 \leq i \leq s$, define $h_i := h^{g^{-1}}_{i-1}$. It is clear that for each $i \leq s$ we have $$ G_{{v_0},...,{v_{s-i}}} = \langle h_1, ..., h_i\rangle.$$
	
	We may write the automorphisms $h_1, g \in G$ as elements of $\Aut(N) \cong \Aut(T) \wr \Sym(k)$, so that $h_1 = f\sigma$ and $g = f'\tau$ where $f, f' \in \Aut(T)^k$ and $\sigma, \tau \in \Sym(k)$. In fact in case(b), $\sigma, \tau \in \Sym(\ell) \wr \Sym(2)$ with $\sigma \in \Sym(\ell)\times \Sym(\ell)$. In either case, $h_1^2 =1$ implies that $\sigma^2 =1$.
	
	Now let $\pi$ denote the projection map $\pi: \Aut(N) \rightarrow \Sym(k)$%\footnote{In case (b) $\Sym(\ell) \wr \Sym(2)$.}
	, so that $(h_1)\pi = \sigma$ and $(g)\pi = \tau$, and let $P := (G^+)\pi = (NG_{v_{0}})\pi = (G_{v_0})\pi$. Note that $P$ is a 2-group since $G_{v_0}$ is a 2-group, and moreover  $$P = (G_{v_0})\pi = \langle h_1,h_2,..., h_s\rangle\pi = \langle \sigma, \sigma^{\tau^{-1}},..., \sigma^{\tau^{-(s-1)}}\rangle.$$
	
	We claim that $\sigma$ is not contained in any proper $\tau$-invariant subgroup of $P$. Suppose on the contrary that $\bar{P}$ is a proper $\tau$-invariant subgroup of $P$ containing $\sigma$. Since $\bar{P}$ is $\tau$-invariant it follows that $\sigma^{\tau^{-i}} \in \bar{P}$ for all $i \in \mathbb{Z}$, implying that $P\leq \bar{P}$ and hence that $P = \bar{P}$, a contradiction.
	
	Notice that $P$ is a subgroup of index 1 or 2 of $(G)\pi$, and $P$ is transitive in case (a) or  has two orbits of length $\ell$ in case (b), so $k$ divides $|P|$, or $\ell$ divides $|P|$ respectively. We will now consider separately the two possibilities for the index of $P$ in $(G)\pi$ and show that in either case $|P|$ divides 4.
	
	Suppose first that $P = (G)\pi$ and let $M$ be a maximal subgroup of $P$ containing $\langle \sigma \rangle$. Since $P$ is a 2-group it follows that $M$ is normal in $P$ and, in particular must be $\tau$-invariant. Since $\sigma$ cannot be contained in any proper $\tau$-invariant subgroup of $P$, it follows that $P = \langle \sigma \rangle$ with order at most 2, and therefore that $k \leq 2$ (or $\ell \leq 2$).
	
	Suppose on the other hand that $P$ is an index 2 subgroup of $(G)\pi$, in particular, this implies that the order of $\sigma$ is 2. In this case $\tau\in (G)\pi \backslash P$. However $g^2 \in G^+$ and hence $\tau^2 \in P$. Furthermore, $\sigma$ does not lie in any proper $\tau$-invariant subgroup $H$ of $P$ (otherwise we can use the same argument as in the previous paragraph to show that $H = P$, a contradiction).
	
	Now let $L := \Phi(P)$, the Frattini subgroup of $P$ and note that $P/L$ is elementary abelian. Then $L$ is $\tau$-invariant since $\tau$ normalizes $P$, so $L$ does not contain $\sigma$. Setting $J := \langle L, \sigma \rangle$, it follows that $J/L$ has order 2, and conjugation  by $\tau^{-1}$ maps $J/L$ to $(J^{\tau^{-1}})/L$. However, $J$ is normal in $P$ since $P/L$ is elementary abelian. In particular, since $\tau^2 \in P$, conjugation by $\tau^2$ fixes $J$ and $J/L$. 
	
	Therefore repeated applications of conjugation by $\tau$ simply interchange the two (possibly equal) subgroups $J/L$ and $(J^{\tau^{-1}})/L$ of $P/L$ and each generator $\sigma^{\tau^{-i}}$ of $P$,  lies in either $J$ or $J^{\tau^{-1}}.$ It follows that $P/L$ is generated by $J/L$ and $J^{\tau^{-1}}/L$, and hence that $P/L \cong C_2^c$ for $c \leq 2$. 
	
	If $c =1$ then $P \cong \langle \sigma \rangle$ and this implies that $k= 2$ in case (a), or  that $\ell =2$ in case (b).
	On the other hand, if $P/L \cong C_2^2$, then $P = \langle \sigma, \sigma^{\tau^{-1}} \rangle = \langle h_1, h_2 \rangle \pi$ and since we know that $\langle h_1, h_2 \rangle =  G_{{v_0},...,{v_{s-2}}}$ has order $2^2 = 4$, it follows that the order of $P$ divides 4. In particular $k$ divides 4 in case (a), or $\ell$ divides 4 in case (b). This completes the proof.
\end{proof}

The first assertions of Theorem \ref{MainResult} now follow directly from Proposition \ref{twotypes} together with Lemmas \ref{abelianBound} and \ref{nonabelianBound}. 

\section{Constructing Biquasiprimitve Pairs}\label{SecConstructions}
 In this section we complete the proof of Theorem \ref{MainResult}. We do this by explicitly constructing examples of biquasiprimitive pairs corresponding to the different cases of Theorem \ref{MainResult}. In each of the three cases (a) - (c) of Theorem \ref{MainResult}, the parameter $k$ (the number of simple direct factors of the socle of $G$) can take several different values. In case (a) there are two possibilities for the value of $k$, while in each of the cases (b) and (c) there are three possibilities.
 
 Thus Theorem \ref{MainResult} gives a total of eight different possibilities for the structure of $\soc(G)$ of a biquasiprimitive pair $(\Gamma, G)$ where the number of simple direct factors is taken into account. To complete the proof, we therefore provide eight infinite families of biquasiprimitive basic pairs corresponding to these distinct cases. 

In Subsection \ref{Methods} we will describe two methods for constructing biquasiprimitive basic pairs. In short, Method \ref{BiMethod} uses the standard bi-Cayley graph construction described in Subsection \ref{ssBiCay}, while Method \ref{CosetMethod} is a more general coset graph construction developed from Proposition \ref{iso}. All of our  constructions of biquasiprimitive pairs will use one of these two methods.

The examples constructed to complete the proof of Theorem \ref{MainResult} are given in Constructions \ref{Abelian k=1} - \ref{Nonabelian ell=4} of this section. Table \ref{ConstructionTable} shows all of these constructions along with the explicit simple group $T$ used in each case. The `Methods Used' column refers to one of the two methods developed in Subsection \ref{Methods} for producing biquasiprimitive pairs. The construction numbers are included for easy reference.

	\begin{table}[h]
	
	\begin{tabular}{ l l l l l}
		\hline
		Case described in Theorem \ref{MainResult} & Value of $k$ & Simple Group $T$ & Construction \# & Method Used \\ 
		\hline
		Case (a) & $k = 1$ &$\mathbb{Z}_p$, $p \equiv 1$ mod 4 & Construction \ref{Abelian k=1} & Method \ref{BiMethod}\\  
	& $k = 2$ & $\mathbb{Z}_p$, $p \equiv 3$ mod 4  & Construction \ref{Abelian k=2}  & Method \ref{BiMethod} \\
		Case (b) & $k = 1$ & Alt($n$), $n\geq5$, odd & Construction \ref{Nonabelian k=1} & Method \ref{BiMethod}\\
	 & $k = 2$ & Alt($n$), $n\geq5$, odd & Construction \ref{Nonabelian k=2} & Method \ref{BiMethod}\\
		 & $k = 4$ & $\PSL(2,p) ,$ $ p\geq 7 $& Construction \ref{Nonabelian k=4} & Method \ref{CosetMethod}\\
		 	Case (c) & $k = 2$ & Alt($n$), $n\geq5$, odd & Construction \ref{Nonabelian ell=1} & Method \ref{BiMethod} \\
		 & $k = 4$ & $\PSL(2,p) ,$ $ p\geq 7 $ & Construction \ref{Nonabelian ell=2} & Method \ref{CosetMethod}\\
		 & $k = 8$ & $\PSL(2,p) ,$ $ p\geq 7 $ & Construction \ref{Nonabelian ell=4} & Method \ref{CosetMethod}\\
		\hline
	\end{tabular}\caption{Constructions of basic biquasiprimitive pairs $(\Gamma, G)$ with $\soc(G) \cong T^k$ as described in the various cases of Theorem \ref{MainResult}. }\label{ConstructionTable}
\end{table}

\subsection{Two Methods for Constructing Biquasiprimitive Pairs}\label{Methods}
One way to construct biquasiprimitive pairs is using the `standard' bi-Cayley construction described in Subsection \ref{ssBiCay}. Specifically, if $(\Gamma, G)\in \OG(4)$ is basic and biquasiprimitive, and the unique minimal normal subgroup $N$ of $G$ is semiregular on the two $G^+$-orbits, then we can take $\Gamma$ to be a bi-Cayley graph $\Gamma:= \BiCay(N, \emptyset, \emptyset, S)$ (for some subset $S$ of $N$ of cardinality 4).

In our constructions involving bi-Cayley graphs presented in the form $\Gamma = \BiCay(N, \emptyset, \emptyset, S)$ we will always use the natural labelling of the vertex $V\Gamma$. That is, we let $V\Gamma = N_0\cup N_1$ consisting of two copies of the group $N$ with each vertex labelled $(n)_\epsilon$ for $n\in N$ and $\epsilon \in \{0,1\}$. 

Suppose now that $\Gamma = \BiCay(N, \emptyset, \emptyset, S)$ where $S = S^{-1}$. Of course, such a graph is bipartite with $N_0$ and $N_1$ forming the bipartition. In order to show that a $\Gamma$ is connected, it suffices to show that the vertex set $N_0$ lies in a single connected component of $\Gamma$, or in other words that there is a path from $(1_N)_0$ to $(n)_0$ for any $n\in N$ (vertex-transitivity then ensures that this holds for $N_1$ also). Any such path must have even length and consist of repeated left multiplication in $N$ by an element of $S$ followed by an element of $S^{-1} = S$. In particular, the graph $\Gamma$ is connected if $\langle S^2\rangle = N$. 
 
Hence we have the following simple method for constructing biquasiprimitive basic pairs $(\Gamma, G)$.

\begin{method}\label{BiMethod} Take a group $N = T^k$ where $T$ is a simple group and $k \geq 1$, and construct a pair $(\Gamma, G)$ with $N = \soc(G)$ as follows:
	\begin{enumerate}
 	\item Let $\Gamma = (N, \emptyset, \emptyset , S)$, where $S\subset N$ such that $S= S^{-1}$, $|S| = 4$,  and $\langle S \rangle = \langle S^2 \rangle = N$.
	\item  Take a group $G$ with $N \leq G \leq N_{\Aut(\Gamma)}(N)$  for which $\Gamma$ is $G$-oriented. This gives $(\Gamma, G)\in \OG(4)$.
	\item Show that $N$ is the unique minimal normal subgroup of $G$ to get that $(\Gamma, G)$ is biquasiprimitive. 
\end{enumerate}
\end{method}
Note that  $N_{\Aut(\Gamma)}(N)$ (the normalizer of $N$ in $\Aut(\Gamma)$) was determined in \cite[Theorem 1.1]{zhou2016automorphisms}. In fact, in our constructions we will only use the following fact which follows from \cite[Lemmas 3.2 and 3.3]{zhou2016automorphisms}.

%Specifically, to construct biquasiprimitive basic pairs $(\Gamma, G)$ where the socle $N:= \soc(G)$ is semiregular on the two $G^+$-orbits, we can take $\Gamma$ to be a bi-Cayley graph $\Gamma:= \BiCay(N, \emptyset, \emptyset, S)$ (for some subset $S$ of $N$ of cardinality 4), and then let $N \leq G \leq N_{\Aut(\Gamma)}(N)$ such that $G$ is biquasiprimitive on $V\Gamma$. 
\begin{Proposition}\label{AutBiCay}
	Let $\Gamma = \BiCay(N,\emptyset,\emptyset, S)$ as defined in Subsection \ref{ssBiCay} with $S = S^{-1}$. Suppose $\alpha \in \Aut(N)$ with $S^\alpha = S$. 
	Then the permutations $\delta_\alpha$ and $\sigma_\alpha$ of $V\Gamma$ where $\delta_\alpha: $ $x_\varepsilon \mapsto (x^\alpha)_{1-\varepsilon}$, and $\sigma_\alpha:$ $x \mapsto (x^\alpha)_{\varepsilon}$ for $x\in N$ and $\varepsilon \in \{0,1\}$ are both automorphisms of $\Gamma$. Moreover both $\delta_\alpha$ and $\sigma_\alpha$ normalize the semi-regular subgroup $N\leq \Aut(\Gamma)$.
	\end{Proposition}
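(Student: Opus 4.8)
The plan is to verify directly from the definition of $\Gamma = \BiCay(N, \emptyset, \emptyset, S)$ that $\sigma_\alpha$ and $\delta_\alpha$ are bijections of $V\Gamma$ carrying edges to edges, and then to compute the conjugate of an arbitrary right-multiplication element by each of them. Recall that since $R = L = \emptyset$ the only edges of $\Gamma$ are the spokes $\{h_0, g_1\}$ with $gh^{-1} \in S$, and that the semiregular subgroup $N \leq \Aut(\Gamma)$ acts by right multiplication, the element $n \in N$ inducing the permutation $\rho_n: x_\varepsilon \mapsto (xn)_\varepsilon$.

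First I would treat $\sigma_\alpha$, which is a bijection of $V\Gamma$ with inverse $\sigma_{\alpha^{-1}}$. Given a spoke $\{h_0, g_1\}$, its image $\{(h^\alpha)_0, (g^\alpha)_1\}$ is again a spoke precisely because $(g^\alpha)(h^\alpha)^{-1} = (gh^{-1})^\alpha \in S^\alpha = S$, using that $\alpha$ is an automorphism of $N$. Since $S^\alpha = S$ forces $S^{\alpha^{-1}} = S$, the same computation applied to $\sigma_{\alpha^{-1}}$ shows that the inverse also sends edges to edges, so $\sigma_\alpha \in \Aut(\Gamma)$.

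The verification for $\delta_\alpha$ runs in parallel (its inverse is $\delta_{\alpha^{-1}}$), but it is here that the extra hypothesis $S = S^{-1}$ enters, and this is the one point I would flag as the crux, modest though it is. Because $\delta_\alpha$ interchanges the two biparts, it sends the spoke $\{h_0, g_1\}$ to $\{(h^\alpha)_1, (g^\alpha)_0\}$, which is a spoke if and only if $(h^\alpha)(g^\alpha)^{-1} = (hg^{-1})^\alpha$ lies in $S$; and indeed $hg^{-1} = (gh^{-1})^{-1} \in S^{-1} = S$, whence $(hg^{-1})^\alpha \in S^\alpha = S$. Without $S = S^{-1}$ this bipartite swap would not respect the spokes.

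Finally, for the normalizing claim I would compute $\rho_n^{\sigma_\alpha} := \sigma_\alpha^{-1} \rho_n \sigma_\alpha$ on a typical vertex $h_\varepsilon$. Tracking the images successively through $\sigma_{\alpha^{-1}}$, then $\rho_n$, then $\sigma_\alpha$ gives $h_\varepsilon \mapsto ((h^{\alpha^{-1}} n)^\alpha)_\varepsilon = (h \cdot n^\alpha)_\varepsilon$, so that $\rho_n^{\sigma_\alpha} = \rho_{n^\alpha} \in N$. The identical computation with $\delta_{\alpha^{-1}} = \delta_\alpha^{-1}$ in place of $\sigma_{\alpha^{-1}}$, the two bipartite swaps cancelling in the index, yields $\rho_n^{\delta_\alpha} = \rho_{n^\alpha} \in N$ as well. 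Hence both $\sigma_\alpha$ and $\delta_\alpha$ normalize $N$, which completes the argument.
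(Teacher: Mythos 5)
Your verification is correct and complete. The paper does not actually prove this proposition itself: it simply asserts that it follows from Lemmas 3.2 and 3.3 of the cited work of Zhou and Feng on automorphisms of bi-Cayley graphs, which treat the general case with $R$ and $L$ possibly nonempty. Your argument is the natural direct verification of the special case $R=L=\emptyset$: since the only edges are spokes $\{h_0,g_1\}$ with $gh^{-1}\in S$, the computation $(gh^{-1})^\alpha\in S^\alpha=S$ handles $\sigma_\alpha$, while for $\delta_\alpha$ the bipart swap turns the relevant quotient into $(hg^{-1})^\alpha$, which is where $S=S^{-1}$ genuinely enters --- you are right to flag that as the one nontrivial point, and right that the hypothesis is not needed for $\sigma_\alpha$. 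The conjugation computation $\rho_n^{\sigma_\alpha}=\rho_n^{\delta_\alpha}=\rho_{n^\alpha}$ is also correct and in fact gives slightly more than the stated normalization claim. In short, your proof is self-contained where the paper defers to a reference, and it isolates exactly which hypotheses are used where; nothing is missing.
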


More generally, we may construct biquasiprimitive pairs $(\Gamma, G)$ by using the coset graph construction. For a group $G$, a proper subgroup $S$, and an element $g \in G$, the \textit{coset graph} $\Gamma = \Cos(G, S, g)$ is the undirected graph with vertex set $\{Sx : x \in G\}$ and edges $\{Sx, Sy\}$ if and only if $xy^{-1}$ or $yx^{-1} \in SgS$. The group $G$ acting by right multiplication on $V\Gamma$ induces a vertex-transitive and edge-transitive group of automorphisms of $\Gamma$, and this action is faithful if and only if $S$ is core-free in $G$.
Furthermore, the graph $\Gamma$ is connected if and only if $\langle S, g\rangle = G$, and is $G$-oriented and 4-valent if and only if $g^{-1} \notin SgS$ and $|S: S\cap S^g| = 2$ (see discussion at the beginning of \cite[Section 5]{al2015finite}). In summary, if $\Gamma = \Cos(G, S, g)$, then $(\Gamma, G) \in \OG(4)$ if and only if 
\begin{enumerate}[(1)]
	\item \hspace{1cm} $S$ is core-free in $G$, \hspace{.5cm} $g^{-1} \notin SgS$, \hspace{.5cm} $|S: S\cap S^g| = 2$, \hspace{.2cm} and \hspace{.2cm} $\langle S, g\rangle = G$.
\end{enumerate}
Moreover, for each pair $(\Gamma, G) \in \OG(4)$ there exist $S \le G$ and $g \in G$ such that $\Gamma = \Cos(G, S, g)$ and (1) holds. 

We can use Proposition \ref{iso} on the structure of biquasiprimitive basic pairs $(\Gamma, G) \in \OG(4)$ together with the coset graph construction given above to find examples of coset graphs of biquasiprimitive type. We begin by providing a general construction which uses a permutation group $H$ (with some prescribed properties) to produce a pair $(\Gamma, G)$ where $\Gamma$ is a coset graph for $G$, and $G$ has an index 2 subgroup isomorphic to $H$. In the remainder of this section we will show that under certain conditions the pairs $(\Gamma, G)$ constructed in this way are biquasiprimitive.

\begin{Construction}\label{GeneralCoset}
	Take a permutation group $H$, a proper subgroup $V\leq H$, a non-identity element $y\in H$, and an automorphism  $\varphi \in \Aut(H)$ such that $\varphi^2 = \iota_y$.
	
	Now consider the group $H \wr S_2$ and define two of its subgroups $G^+ := \Diag_{\varphi}(H\times H)$, and  $S := \Diag_{\varphi}(V \times V)$. Also define an element $g := (y,1)(12)\in  H \wr S_2$. Finally construct the graph-group pair $(\Gamma, G)$ where $G := \langle G^+, g \rangle \leq H \wr S_2$ and $\Gamma := \Cos(G, S, g)$.
	
\end{Construction}
It is clear that the construction of the group $G$ in this way corresponds to the formulation of the biquasiprimitive permutation group $G$ given in Proposition \ref{iso}. Notice in particular that using this construction, the pair $(\Gamma, G)$ is completely determined by the choices of appropriate $H, V, y$ and $\varphi$. Hence we will say that a tuple $(H, V, y, \varphi)$  is \textit{appropriate}  if $H, V, y$ and $\varphi$ satisfy the conditions of Construction \ref{GeneralCoset}. In many of the constructions that follow, we will simply apply Construction \ref{GeneralCoset} on an appropriate $(H, V, y, \varphi)$ to create pairs $(\Gamma, G)$.
The following lemma gives a sufficient condition for $(\Gamma, G)$ constructed in this way to be a member of $\OG(4)$.

\begin{Lemma}\label{cond2}
	Let  ($\Gamma, G$) be a graph-group pair constructed using Construction \ref{GeneralCoset} on  an appropriate $(H, V, y, \varphi)$. Then $(\Gamma, G) \in \OG(4)$ if 
	\begin{enumerate}[(2)]
		\item \hspace{1cm} $V$ is core-free in $H$, \hspace{.5cm} $y \notin VV^\varphi$, \hspace{.5cm} $|V: V\cap V^\varphi| = 2$, \hspace{.2cm} and \hspace{.2cm} $\langle V, y\rangle = H$.
	\end{enumerate}	
\end{Lemma}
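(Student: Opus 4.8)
The plan is to verify that the triple $(G,S,g)$ satisfies the four coset-graph conditions listed in (1), since by the discussion immediately preceding the lemma these are exactly what guarantees $(\Gamma,G)=(\Cos(G,S,g),G)\in\OG(4)$. The whole argument is carried out by transporting the hypotheses (2) on $(H,V,y,\varphi)$ across the isomorphism $\theta\colon G^+=\Diag_\varphi(H\times H)\to H$, $(h,h^\varphi)\mapsto h$, under which $\theta(S)=V$ and, crucially, conjugation by $g$ corresponds to $\varphi$. Indeed a direct computation gives $g^{-1}=(1,y^{-1})(12)$, $g^2=(y,y)$, and $(h_1,h_2)^g=(h_2,h_1^y)$, so that $(h,h^\varphi)^g=(h^\varphi,h^y)=(h^\varphi,(h^\varphi)^\varphi)\in G^+$ and $\theta\big((h,h^\varphi)^g\big)=h^\varphi=\theta(h,h^\varphi)^\varphi$; in particular $\theta(S^g)=V^\varphi$.

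First I would dispose of the two conditions that translate immediately. For core-freeness, the core $C:=\mathrm{core}_G(S)$ is normal in $G$, hence in $G^+$, and is contained in $S\le G^+$; applying the isomorphism $\theta$ yields a normal subgroup $\theta(C)\trianglelefteq H$ with $\theta(C)\le V$, so $\theta(C)\le\mathrm{core}_H(V)=1$ by (2), whence $C=1$. For the valency condition $|S:S\cap S^g|=2$, since $S,S^g\le G^+$ and $\theta$ is an isomorphism onto $H$ carrying $S\mapsto V$ and $S^g\mapsto V^\varphi$, we get $|S:S\cap S^g|=|V:V\cap V^\varphi|=2$ directly from (2).

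For the orientation condition $g^{-1}\notin SgS$ I would compute the double coset explicitly. Writing $s_1=(a,a^\varphi)$ and $s_2=(b,b^\varphi)$ with $a,b\in V$ gives $s_1\,g\,s_2=(ayb^\varphi,\,a^\varphi b)(12)$, so the first $H$-coordinates occurring in $SgS$ form the set $VyV^\varphi$. Since $g^{-1}=(1,y^{-1})(12)$ has first coordinate $1$, the containment $g^{-1}\in SgS$ would force $1=ayb^\varphi$ for some $a,b\in V$, i.e.\ $y=a^{-1}(b^{-1})^\varphi\in VV^\varphi$ (both $V$ and $V^\varphi$ being inverse-closed subgroups), contradicting the hypothesis $y\notin VV^\varphi$. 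Hence $g^{-1}\notin SgS$.

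The connectedness condition $\langle S,g\rangle=G$ is the main obstacle, and the only place the hypothesis $\langle V,y\rangle=H$ enters. Since $G=\langle G^+,g\rangle$ and $g\in\langle S,g\rangle$, it suffices to show $G^+\le\langle S,g\rangle$. Because $H=\langle V,y\rangle$, the subgroup $G^+=\{(h,h^\varphi)\mid h\in H\}$ is generated by the images of $V$ and of $y$, namely $G^+=\langle S,(y,y^\varphi)\rangle$, so it is enough to place the single element $(y,y^\varphi)$ inside $\langle S,g\rangle$. This is where $g^2=(y,y)\in\langle S,g\rangle$ is used: the relation $\varphi^2=\iota_y$ forces $y^\varphi y^{-1}\in Z(H)$, and in the intended setting, where $G^+$ has index $2$ in $G$ (equivalently $g^2\in G^+$, equivalently $y^\varphi=y$, which holds whenever $Z(H)=1$ as in the almost simple applications), we get $(y,y^\varphi)=(y,y)=g^2\in\langle S,g\rangle$. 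The delicate point to get right is precisely this identification of $(y,y^\varphi)$ with $g^2$ through the condition $\varphi^2=\iota_y$; once it is secured we obtain $G^+\le\langle S,g\rangle$ and hence $\langle S,g\rangle=G$, and together with the three conditions already verified the cited characterisation of $\OG(4)$-coset graphs gives $(\Gamma,G)\in\OG(4)$.
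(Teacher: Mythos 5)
Your proof is correct and follows essentially the same route as the paper's: both verify the four coset-graph conditions in (1) by the same direct computations in $H\wr S_2$ (core-freeness and $|S:S\cap S^g|=2$ via the isomorphism $G^+\cong H$ carrying $S$ to $V$ and $S^g$ to $V^\varphi$, the condition $g^{-1}\notin SgS$ by comparing first coordinates of $s_1gs_2=(ayb^\varphi,a^\varphi b)(12)$ with those of $g^{-1}=(1,y^{-1})(12)$, and connectedness via $g^2=(y,y)$). The one point where you are more careful than the paper is the identification $g^2=(y,y)=(y,y^\varphi)\in G^+$: the paper asserts this without comment, whereas you correctly observe that $\varphi^2=\iota_y$ only yields $y^\varphi y^{-1}\in Z(H)$, so the equality $y^\varphi=y$ needed for $\langle S,g^2\rangle=\Diag_\varphi(H\times H)$ holds in all of the paper's applications (where $Z(H)=1$) but is not formally guaranteed by the tuple being ``appropriate'' as defined.
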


\begin{proof}
	Let $G^+$ and $S$ be the subgroups of $G$ defined in the construction, and let $\Gamma = \Cos(G, S, g)$. Suppose that (2) holds. We will show that $(\Gamma, G) \in \OG(4)$ by showing that (1) holds also.  
	
	First, since $H \cong G^+$, $S \cong V$, and $V$ is core-free in $H$, it follows $S$ is core-free in $G^+$ and hence is core-free in $G$.
	Next, we will show that $y\notin V V^\varphi$ implies that $g^{-1} \notin SgS$. 
	Notice that $g^{-1} = (1,y^{-1})(12)$, while for any element $z \in SgS$, $z = (s,s^\varphi)(y,1)(12)(t,t^\varphi) = (syt^\varphi, s^\varphi t)(12)$ for some $s,t \in V$. Thus if $g^{-1} = z$ for some $z \in SgS$, then $1 = syt^\varphi$ and hence $y \in VV^\varphi$.
	
	For the last two conditions notice that if we take $x \in G^+$ then $x^g = (h, h^\varphi)^g = (h^\varphi, h^y)$ for some $h\in H$. In particular, for $s \in S$ we have $s^g = (t^\varphi, t^y)$ where $t \in V$. So $s^g \in S$ if and only if $t^\varphi \in V$. Since $V \cong S$ we get that $|S:S\cap S^g| = |V: V \cap V^\varphi|$. 
	
	Finally, it is easy to check that $g^2 = (y,y) \in G^+$. Hence if $\langle V, y \rangle = H$, then $\langle S, g^2 \rangle = \Diag_{\varphi}(\langle V, y\rangle\times \langle V, y\rangle) = \Diag_{\varphi}(H\times H) = G^+$ and so $\langle S, g \rangle = G$.
\end{proof}

Hence we have an easy condition for ensuring that pairs $(\Gamma, G)$ formed using Construction \ref{GeneralCoset} are contained in $\OG(4)$. Our next goal is to provide a simple condition under which such pairs are biquasiprimitive.

\begin{Lemma}\label{Cosmin}
		Let  ($\Gamma, G$) be a graph-group pair constructed using Construction \ref{GeneralCoset} on an appropriate $(H, V, y, \varphi)$. Let $G^+$ and $S$ as defined in that construction. 
		Then
		\begin{itemize}
			\item Every minimal normal subgroup of $G$ is contained in $G^+$.
			\item If  $\soc(G^+) \cong \soc(H)$ is a minimal normal subgroup of $G$ then it is the unique minimal normal subgroup of $G$.
		\end{itemize}

\end{Lemma}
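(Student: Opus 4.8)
The plan is to exploit the very concrete structure of $G$ produced by Construction \ref{GeneralCoset}, namely that $G^+ = \Diag_\varphi(H\times H)$ is an index-two subgroup sitting inside $H\wr S_2$ and $G = \langle G^+, g\rangle$ with $g=(y,1)(12)$. The two bullet points are almost exactly the statements that were established (in the abstract biquasiprimitive setting) in Proposition \ref{twotypes} and Lemma \ref{G^+ faithful}, so the work is to re-derive them directly from the construction without assuming $(\Gamma,G)\in\OG(4)$.

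For the first bullet I would argue as follows. Let $P$ be a minimal normal subgroup of $G$. The top-group projection $\pi\colon H\wr S_2\to S_2$ restricts to a homomorphism on $G$ with kernel $G\cap(H\times H)=G^+$, since $g$ maps to the nontrivial element $(12)$. Thus $G/G^+\cong C_2$. If $P\not\leq G^+$ then $P\pi = S_2$, so $PG^+ = G$ and $|P:P\cap G^+|=2$; moreover $P\cap G^+$ is normal in $G$. I would then run the standard argument (exactly as in the proof of Lemma \ref{G^+ faithful}(a)): writing $K:=P\cap G^+$, which is a proper normal subgroup of the minimal normal $P$, minimality forces either $K=1$ or $K=P$; the latter is excluded since $|P:K|=2$, so $K=1$ and $P\cong C_2$ is generated by an involution $t=(a,b)(12)\in G$. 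Because $P$ is normal in $G$, conjugating $t$ by an arbitrary $(h,h^\varphi)\in G^+$ must return $t$ or $1$, and since $P$ is generated by $t$ it must return $t$; this says $t$ centralises $G^+$. A direct computation of $C_G(G^+)$ — as was already carried out in the proof of Proposition \ref{twotypes} where it was shown that the centraliser of $g$ in $G^+$ consists of the pairs $(h,h^\varphi)$ with $h\in C_H(\varphi)$ — forces the swapping element $t$ to centralise all of the diagonal, which (since $H$ is nontrivial and $V$ is proper, so $|H|>1$) is impossible. Hence no minimal normal subgroup can meet $G\setminus G^+$, giving the first bullet.

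For the second bullet, suppose $M:=\soc(G^+)\cong\soc(H)$ is a minimal normal subgroup of $G$ (it is characteristic in $G^+$, hence normal in $G$). Let $P$ be any minimal normal subgroup of $G$. By the first bullet $P\leq G^+$, so $P$ is a normal subgroup of $G^+$ and therefore contains a minimal normal subgroup of $G^+$, which lies inside $\soc(G^+)=M$. Thus $P\cap M\neq 1$. But $P\cap M$ is normal in $G$ (being the intersection of two normal subgroups), and it is contained in the minimal normal subgroup $P$, so minimality of $P$ forces $P\cap M=P$, i.e. $P\leq M$. The same reasoning with the roles reversed, using that $M$ is minimal normal in $G$, gives $M\leq P$, whence $P=M$. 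Therefore $M$ is the unique minimal normal subgroup of $G$.

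The main obstacle I anticipate is the first bullet, specifically pinning down that an involution $t\in G\setminus G^+$ centralising $G^+$ cannot exist. The abstract biquasiprimitive argument invoked $|X|>4$ to conclude $g$ does not centralise $G^+$; here I must instead extract the contradiction purely from the constructional data $(H,V,y,\varphi)$, using only that $H$ is a nontrivial permutation group with a proper subgroup $V$. I expect the clean route is to compute $t^{-1}(h,h^\varphi)t$ explicitly: if $t=(a,b)(12)$ then conjugation sends $(h,h^\varphi)$ to $(b^{-1}h^\varphi b,\, a^{-1}ha)$, and equating this with $(h,h^\varphi)$ for all $h\in H$ forces $\varphi$ to be inner (conjugation by $a$) and $\varphi^{-1}$ also inner, which together with $\varphi^2=\iota_y$ would make $H$ abelian modulo its centre in a way incompatible with $M=\soc(H)$ being the full socle unless $H=1$; this contradicts $V\lneq H$. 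I would present this as the crux and keep the remaining bookkeeping brief.
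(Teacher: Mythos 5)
Your second bullet is correct and coincides with the paper's argument (any minimal normal subgroup $P\neq \soc(G^+)$ would lie in $G^+$ by the first bullet and hence meet $\soc(G^+)$ nontrivially, which is impossible for two distinct minimal normal subgroups of $G$). The reduction you perform for the first bullet --- if $P\not\leq G^+$ then $P\cap G^+=1$, so $P=\langle t\rangle\cong C_2$ for an involution $t=(a,a^{-1})(12)$ lying outside $G^+$ and centralising $G^+$ --- is also sound, and parallels the paper's reduction to $G=G^+\times P$.

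The gap is precisely at the step you yourself flag as the crux, and it cannot be closed from the data of Construction \ref{GeneralCoset} alone. Your computation correctly shows that $t$ centralises $\Diag_\varphi(H\times H)$ exactly when $\varphi=\iota_a$, but the conclusion ``hence $H=1$'' is a non sequitur: $\varphi$ being inner is perfectly compatible with the requirements on an appropriate tuple, and the hypothesis about $\soc(H)$ that you invoke is not present in this lemma (it only enters in Corollary \ref{CorMin}). Concretely, take any $H$ containing an element $a$ of order greater than $2$, set $y:=a^2$ and $\varphi:=\iota_a$ (so $\varphi^2=\iota_{a^2}=\iota_y$), and let $V$ be any proper subgroup. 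Then $t:=(a,a^{-1})(12)=(a^{-1},(a^{-1})^{\varphi})\cdot g$ lies in $G^+g$, is an involution, centralises $G^+$, and satisfies $t^g=t$; hence $\langle t\rangle$ is a central minimal normal subgroup of $G$ not contained in $G^+$. With $H=\langle a\rangle\cong C_4$ and $V=1$ one gets $G\cong C_4\times C_2$ and both bullets of the lemma visibly fail, so no argument from the bare constructional data can succeed: the statement needs an additional hypothesis excluding such involutions (all of the paper's explicit applications satisfy one, since there $H$ has trivial centre and $\varphi$ is not inner). For comparison, the paper's own proof disposes of this case in one line by asserting that $G=G^+\times P$ forces $P=\langle g\rangle$ and then using $g^2=(y,y)\neq 1$; that assertion is no better justified than yours --- the order-two complement need not be generated by $g$ --- so your instinct that this is the delicate point was exactly right, but the fix has to come from strengthening the hypotheses rather than from a cleverer computation.
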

\begin{proof}
	For the first part, notice that $|G:G^+| = 2$ (since $G = \langle G^+, g\rangle$, $g$ noramlizes $G^+$ and $g^2 = (y,y) \in G^+$). Now consider a minimal normal subgroup $N$ of $G$ and suppose that $G^+\cap N \neq N$. By the minimality of $N$ it follows that $G^+ \cap N = 1$ implying that $G = G^+ \times N$. But this implies that $N = \langle g \rangle$ with order 2, a contradiction since $g^2 = (y,y) \neq 1$. Hence $N\leq G^+$.
	
	For the second part, suppose that $\soc(G^+)$ is a minimal normal subgroup of $G$ and take a minimal normal subgroup $N$ of $G$ with $N \neq \soc(G^+)$. Then by the first part, $N$ is normal in $G^+$. In particular, $N\cap \soc(G^+) \neq 1$, a contradiction.
\end{proof}

The above result gives the following corollary.

\begin{Corollary}\label{CorMin}
	Suppose that $(\Gamma, G) \in \OG(4)$ where $(\Gamma, G)$ is constructed by Construction \ref{GeneralCoset} on an appropriate $(H, V, y, \varphi)$. Let $G^+$, and $S$ as defined in that construction. 
	
	Suppose further that $H = MV$ where $M = \soc(H) \cong T^k$ for some simple group $T$ and $k\geq1$. If  $\soc(G^+) \cong \soc(H)$ is a minimal normal subgroup of $G$, then $(\Gamma, G)$ is biquasiprimitive.
\end{Corollary}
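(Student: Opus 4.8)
The plan is to show that $N := \soc(G^+)$ is both the unique minimal normal subgroup of $G$ and has exactly two orbits on $V\Gamma$; biquasiprimitivity of $G$ then follows formally. First I would apply the second part of Lemma~\ref{Cosmin}: since by hypothesis $\soc(G^+) \cong \soc(H)$ is a minimal normal subgroup of $G$, that lemma immediately yields that $N = \soc(G^+)$ is the \emph{unique} minimal normal subgroup of $G$. Consequently every nontrivial normal subgroup $L$ of $G$ contains a minimal normal subgroup of $G$ (as $G$ is finite), and by uniqueness must contain $N$. Hence each $N$-orbit on $V\Gamma$ lies inside an $L$-orbit, so the number of $L$-orbits is at most the number of $N$-orbits. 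It therefore suffices to prove that $N$ has exactly two orbits on $V\Gamma$: this bounds the number of orbits of every nontrivial normal subgroup by $2$, and simultaneously exhibits an intransitive normal subgroup (namely $N$ itself), so $G$ is not quasiprimitive. Thus $G$ is biquasiprimitive.

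Next I would compute the orbits of $N$. By the first part of Lemma~\ref{Cosmin} (or simply because $N \leq G^+$), $N$ is contained in the index-$2$ subgroup $G^+$, whose two orbits on $V\Gamma = \{Sx : x \in G\}$ are the biparts $\Delta = \{Sx : x\in G^+\}$ and $\Delta' = \{Sx : x \in G^+g\}$ with $g = (y,1)(12)$; thus $N$ has at least two orbits. The key step is to identify the $G^+$-action on $\Delta$ with a familiar action: the stabiliser in $G^+$ of the base vertex $S$ is $G^+ \cap S = S$, so the $G^+$-action on $\Delta$ is permutation isomorphic to the action of $G^+$ on the cosets $G^+/S$. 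Transporting this across the isomorphism $\rho \colon H \to G^+$, $h \mapsto (h, h^\varphi)$ (under which $V$ maps to $S = \Diag_\varphi(V\times V)$ and $M = \soc(H)$ maps to $N = \soc(G^+)$), the $G^+$-action on $\Delta$ becomes the action of $H$ on $H/V$, with $N$ corresponding to $M$. Therefore $N$ is transitive on $\Delta$ if and only if $M$ is transitive on $H/V$, that is, if and only if $H = MV$, which holds by hypothesis.

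Finally I would extend transitivity from $\Delta$ to $\Delta'$. Since $N$ is normal in $G$ it is normalised by $g$, and $g$ interchanges the two $G^+$-orbits, mapping $\Delta$ onto $\Delta'$; as $N$ is transitive on $\Delta$ and $gNg^{-1} = N$, it is transitive on $\Delta'$ as well. Hence $\Delta$ and $\Delta'$ are precisely the two $N$-orbits, so (with $\Gamma$ connected and bipartite, giving $\Gamma_N \cong K_2$) the pair $(\Gamma, G)$ is biquasiprimitive. I expect the only genuinely delicate point to be the permutation-isomorphism bookkeeping in the second paragraph, specifically checking that under $\rho$ the socle $\soc(H) = M$ maps onto $\soc(G^+) = N$ and that $S$ is the relevant point stabiliser; once this is in place the factorisation hypothesis $H = MV$ does all the work, and the remaining steps are formal consequences of $N$ being the unique minimal normal subgroup.
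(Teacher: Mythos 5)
Your proposal is correct and follows essentially the same route as the paper: Lemma~\ref{Cosmin} gives uniqueness of $N=\soc(G^+)$ as minimal normal subgroup, and the factorisation $H=MV$ is transported through the isomorphism $h\mapsto(h,h^\varphi)$ to give $G^+=NS$, whence $N$ is transitive on each of the two biparts. You simply spell out in more detail the coset-action bookkeeping and the step that every nontrivial normal subgroup contains $N$, both of which the paper leaves implicit.
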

\begin{proof}
		The vertex set of $\Gamma$ is the set of right cosets of $S$ in $G$. Hence there are two $G^+$-orbits, namely $\Delta = \{Sx:x\in G^+\}$ and $\Delta' = \{Sgx:x\in G^+\}$. If $N = \soc(G^+) \cong M$ is a minimal normal subgroup of $G$ then $N$ is the unique such subgroup by Lemma \ref{Cosmin}. Moreover, the condition $H = MV$ implies that $G^+ \cong NS$ so  $N$ is transitive on the two $G^+$-orbits $\Delta$ and $\Delta'$, and hence $G$ is biquasiprimitive on $V\Gamma$.
\end{proof}

%\begin{Lemma}\label{nbUnique}
%	Let a graph $\Gamma$ and group $G$ be constructed using Construction \ref{GeneralCoset}, with $H, V, y, \varphi, G^+$, and $S$ as defined in this construction. If $\soc(H)$ is nonabelian then $\soc(H) \cong \soc(G^+)$ is the unique minimal normal subgroup of $G$
%\end{Lemma}
%\begin{proof}
%	By construction $H \cong G^+$. Moreover, since $G = \langle G^+, g\rangle$ and $g$ normalizes $G^+$ with $g^2 \in G^+$, it follows that $G^+$ has at most two minimal normal subgroups say $K$ and $K^g$ (with $K$ and $K^g$ possibly equal). In particular $K \times K^g = \soc(G^+)$ is a normal subgroup of $G$ and is minimal normal since $G$ is transitive on its simple direct factors. Finally, by Lemma \ref{Cosmin} any minimal normal subgroup $N\neq \soc(G^+)$ of $G$ is normal in $G^+$ and so must contain either $K$ or $K^g$. But this implies that $N \cap \soc(G^+)\neq 1$ a contradiction. Thus $\soc(G^+)$ is the unique minimal normal subgroup of $G$.
%\end{proof}
The above results now provide the following method for constructing biquasiprimitive pairs in $\OG(4)$. 
\begin{method}\label{CosetMethod}
 Take a group $M = T^k$ for some simple group $T$ and $k\geq1$, and define a group $H := MV$ where $M = \soc(H)$ and $V$ is a proper subgroup $V\leq H$. Also take a non-identity $y \in H$ and an automorphism $\varphi \in \Aut(H)$ with $\varphi^2 = \iota_y$, so $(H, V, y ,\varphi)$ is appropriate.
 \begin{enumerate}
		\item Apply Construction \ref{GeneralCoset} on $(H, V, y$, $\varphi$) to create a pair $(\Gamma, G)$.
		\item Show that $H, V, y$ and $\varphi$ satisfy condition (2) of Lemma \ref{cond2} to get that $(\Gamma, G) \in \OG(4)$.
		\item Show that $\soc(G^+) \cong M$ is a minimal normal subgroup of $G$ to get that $(\Gamma, G)$ is biquasiprimitive (by Corollary \ref{CorMin}).
	\end{enumerate}
\end{method}

\subsection{Constructing Examples}
We now provide constructions of basic biquasiprimitve pairs $(\Gamma, G) \in \OG(4)$ with the various possible structures for $\soc(G)$ as described in cases (a) - (c) of Theorem \ref{MainResult}. We will use both the bi-Cayley graph construction described in Subsection \ref{ssBiCay} (Method \ref{BiMethod}) and the coset graph construction developed in the last part of the previous section (Method \ref{CosetMethod}).

We begin with examples of biquasiprimitive basic pairs ($\Gamma, G)$ with $\soc(G)$ abelian. Note that all 4-valent bi-Cayley graphs over an abelian group are arc-transitive \cite[Proposition 1.3]{conder2016edge}.

\begin{Construction}\label{Abelian k=1}
	Take a prime $p \equiv 1$ mod 4 and let  $q \in \mathbb{Z}_p$ such that $q^2 \equiv -1$ mod $p$.
	Let $\Gamma = \BiCay(N, \emptyset, \emptyset, S)$ with vertex set $N_0 \cup N_1$, where $N = \mathbb{Z}_p$ and $S = \{\pm 1, \pm q\}$. 
	Define a permutation $\delta$ of the vertices of $\Gamma$ by $x^\delta_\varepsilon = (x\cdot q)_{1-\varepsilon}$ for $\varepsilon \in \{0,1\}$, and  set $G := N \rtimes\langle \delta\rangle$. 
\end{Construction}

\begin{Lemma}\label{firstConstructionLemma}
	For $\Gamma ,G$ as in Construction \ref{Abelian k=1}, $(\Gamma, G) \in \OG(4)$ and is basic of biquasiprimitive type with $\soc(G)$ as described in Theorem \ref{MainResult} case (a) with $k = 1$. 
\end{Lemma}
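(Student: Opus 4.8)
\textbf{Proof proposal for Lemma~\ref{firstConstructionLemma}.}
The plan is to verify the three requirements in turn: that $(\Gamma,G)\in\OG(4)$, that the pair is basic of biquasiprimitive type, and that $\soc(G)=N\cong\mathbb{Z}_p$ matches case (a) with $k=1$. First I would record that $N=\mathbb{Z}_p$ acts on $V\Gamma$ by right translation, which is semiregular with the two orbits $N_0$ and $N_1$; thus $\Gamma$ is a bi-Cayley graph over $N$ as claimed. Connectedness follows from the discussion preceding Method~\ref{BiMethod}: since $S=\{\pm1,\pm q\}$ and $p$ is prime, $\langle S\rangle=\mathbb{Z}_p=N$, and because $p$ is odd one checks $\langle S^2\rangle=N$ as well, so $\Gamma$ is connected. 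The valency is $|S|=4$ because $q\not\equiv\pm1$ (as $q^2\equiv-1$ and $p\equiv1\bmod4$ forces $q\neq\pm1$), so the four elements of $S$ are distinct.

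Next I would establish that $\Gamma$ is $G$-oriented. The key object is the permutation $\delta:x_\varepsilon\mapsto(xq)_{1-\varepsilon}$. I would first confirm $\delta\in\Aut(\Gamma)$ and that $\delta$ normalises $N$, which follows from Proposition~\ref{AutBiCay} applied to the automorphism $\alpha:x\mapsto xq$ of $N=\mathbb{Z}_p$: one must check $S^\alpha=Sq=\{\pm q,\pm q^2\}=\{\pm q,\mp1\}=S$, using $q^2\equiv-1$. Hence $\delta=\delta_\alpha$ is an automorphism normalising $N$, and $G=N\rtimes\langle\delta\rangle$ is a well-defined subgroup of $N_{\Aut(\Gamma)}(N)$. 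Then I would compute the order of $\delta$: since $\delta^2:x_\varepsilon\mapsto(xq^2)_\varepsilon=(-x)_\varepsilon$ has order $2$ and acts as multiplication by $-1$ within each part, $\delta$ has order $4$, so $\langle\delta\rangle\cong\mathbb{Z}_4$ and $G=N\rtimes\mathbb{Z}_4$ is transitive on $V\Gamma$ (as $N$ is transitive on each part and $\delta$ swaps the parts). To apply the criterion stated after Lemma~\ref{regular_s_arcs}, I would show $G_{v_0}$ has two orbits of size $2$ on $\Gamma(v_0)$ and that no element of $G$ reverses an edge. Taking $v_0=(0)_0$, the stabiliser is $\langle\delta^2\rangle\cong\mathbb{Z}_2$ acting on the neighbours $\{(\pm1)_1,(\pm q)_1\}$ by negation, giving the two orbits $\{1_1,(-1)_1\}$ and $\{q_1,(-q)_1\}$; so $G_{v_0}$ is intransitive on $\Gamma(v_0)$, hence $G$ is not arc-transitive, and combined with vertex- and edge-transitivity this yields $(\Gamma,G)\in\OG(4)$.

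Finally I would identify the socle and the basic type. Since $N=\mathbb{Z}_p$ is a normal subgroup of $G$ with two orbits $N_0,N_1$, and $G/N\cong\mathbb{Z}_4$ is soluble, the only candidate for a minimal normal subgroup other than $N$ would have to meet $N$ trivially; but any nontrivial normal subgroup contained in $G^+=N\rtimes\langle\delta^2\rangle$ and meeting $N$ trivially would centralise $N$, and $\delta^2$ (inversion) does not centralise $N=\mathbb{Z}_p$ for $p>2$, so no such complement is normal. Hence $N$ is the unique minimal normal subgroup, $\soc(G)=N\cong\mathbb{Z}_p=T^1$ with $T$ abelian and $k=1$, matching case (a). For biquasiprimitivity I would verify that every nontrivial normal subgroup of $G$ has at most two orbits: the normal subgroups are governed by $N$ and the action of $\langle\delta\rangle$ on $N$, and since $N$ is already transitive on each bipart, every normal subgroup containing $N$ is transitive or has the two biparts as orbits, while $N$ itself has exactly the two orbits $N_0,N_1$; thus the only normal quotients are $K_1$ and $K_2$. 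The main obstacle will be the orientation verification, namely checking carefully that $G$ does not reverse edges (equivalently that $g^{-1}\notin SgS$-type condition holds) rather than merely that $G_{v_0}$ is intransitive on neighbours; I expect this to reduce to confirming that no power of $\delta$ together with a translation sends an out-arc to its reverse, which follows from the order-$4$ structure of $\langle\delta\rangle$ and the fact that $\delta^2$ fixes both biparts setwise.
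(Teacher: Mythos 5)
Your overall route is the same as the paper's: verify $|S|=4$ and $\langle S\rangle=\langle S^2\rangle=N$ for valency and connectedness, invoke Proposition~\ref{AutBiCay} to see that $\delta\in\Aut(\Gamma)$ normalises $N$, compute $G_{(0)_0}=\langle\delta^2\rangle\cong C_2$ with the two orbits $\{(1)_1,(-1)_1\}$ and $\{(q)_1,(-q)_1\}$ on the neighbourhood, and then analyse the normal subgroups. Your socle and biquasiprimitivity argument (via $C_G(N)=N$, so that any minimal normal subgroup meeting $N$ trivially is trivial, together with the observation that every nontrivial normal subgroup contains the unique minimal normal subgroup $N$, whose orbits are the two biparts) is a perfectly acceptable variant of the paper's direct listing of the proper nontrivial normal subgroups as $N$ and $N\langle\delta^2\rangle$.

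The one genuine gap is the edge-reversal check, which is the substantive half of proving that $\Gamma$ is $G$-oriented. In your middle paragraph you invoke ``edge-transitivity'' before it has been established: given that $G_{v_0}$ has two orbits of size $2$ on $\Gamma(v_0)$, edge-transitivity is exactly equivalent to the assertion that arc-reversal swaps the two $G$-orbits on arcs, i.e.\ that no element of $G$ reverses an edge. In your final paragraph you claim this non-reversal ``follows from the order-$4$ structure of $\langle\delta\rangle$ and the fact that $\delta^2$ fixes both biparts setwise,'' but that is not the reason and does not constitute a proof; an edge-reversing element would lie in $G\setminus G^+$ and is untouched by those observations. What is actually needed is the computation the paper performs: the elements of $G$ carrying $(0)_0$ to its neighbour $(1)_1$ form a coset of $G_{(0)_0}$ of size two, namely $\{q^3\delta,\ q\delta^3\}$, and these send $(1)_1$ to $(1+q)_0$ and $(1-q)_0$ respectively, neither of which equals $(0)_0$ because $q\not\equiv\pm1\pmod p$ (a fact you do record earlier when establishing $|S|=4$). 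Without this two-line calculation the conclusion $(\Gamma,G)\in\OG(4)$, as opposed to $\Gamma$ being $G$-arc-transitive, is unsupported; with it inserted your proof is complete and coincides with the paper's.
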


\begin{proof}
	Since $|S| = 4$ and $\langle S \rangle = \langle S^2\rangle = N$  it follows that $\Gamma$ is 4-valent and connected. Also by Proposition \ref{AutBiCay}, $\delta \in \Aut(\Gamma)$ since it is induced by an automorphism of $N$ fixing $S$ setwise. Notice that the automorphism $\delta$ has order 4 and that the stabilizer of the vertex $(0)_0$ is $\langle \delta^2 \rangle \cong C_2$. This group has two orbits of length two on the neighbourhood of $(0)_0$, namely $\{(1)_1, (-1)_1\}$ and $\{(q)_1, (-q)_1\}$. 
	
	Now, any automorphism $g\in G$ is of the form $g = n \delta^i$ with $n\in N$ and $i \in \{1..4\}$. In particular, any automorphism taking the vertex $(0)_0$ to its neighbour $(1)_1$ must be of the form $g = n \delta^i$ with $n\in N$ and $i \in \{1,3\}$. This gives just two possibilities for such an automorphism namely $g_1 = q^3 \delta$ and $g_2 = q \delta^3$ where $q\in N$.
	These two automorphisms map $(1)_1$ to  $(1+q)_0$ and $(1-q)_0$ respectively. Thus no element of $G$ can reverse edges and $\Gamma$ is $G$-oriented. 
	
	Since the only proper non-trivial normal subgroups of $G$ are $N$ and $N\langle \delta^2 \rangle$ it follows that $(\Gamma, G)$ is basic of biquasiprimitve type.
\end{proof}

\begin{Construction} \label{Abelian k=2}
	Let $\Gamma = \BiCay(N, \emptyset, \emptyset, S)$ where $N = \mathbb{Z}_p^2$ for a prime $p \equiv 3$ mod 4, and $S = \{\pm(1,0), \pm(0,1)\}$. 
	Let $\delta$ be a permutation of $V\Gamma$  taking a vertex $(x,y)_{\epsilon}$ to $(y,-x)_{1-\epsilon}$ where $x, y\in \mathbb{Z}_p$ and $\varepsilon\in\{0,1\}$, and let $G := N\rtimes \langle \delta \rangle$.
\end{Construction} 

\begin{Lemma}
	For $\Gamma ,G$ as in Construction \ref{Abelian k=2}, $(\Gamma, G) \in \OG(4)$ and is basic of biquasiprimitive type with $\soc(G)$ as described in Theorem \ref{MainResult} case (a) with $k = 2$. 
	\end{Lemma}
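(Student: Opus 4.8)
The plan is to mirror the structure of the proof of Lemma \ref{firstConstructionLemma} for Construction \ref{Abelian k=1}, verifying the four properties that establish membership in $\OG(4)$ and then the biquasiprimitivity, but now for $N = \mathbb{Z}_p^2$ with the rotation-type map $\delta$. First I would check that $\Gamma$ is $4$-valent and connected: since $|S| = 4$ and $S = \{\pm(1,0), \pm(0,1)\}$ clearly generates $N$, and $S^2$ contains elements such as $(2,0)$ and $(0,2)$ (using $p$ odd), we have $\langle S^2\rangle = N$, so $\Gamma$ is connected by the criterion in Subsection \ref{Methods}. Next I would confirm $\delta$ is a genuine automorphism of $\Gamma$: the underlying linear map $(x,y)\mapsto (y,-x)$ on $\mathbb{Z}_p^2$ is an automorphism of $N$ that permutes $S$ (it sends $(1,0)\mapsto(0,-1)$, $(0,1)\mapsto(1,0)$, etc.), so by Proposition \ref{AutBiCay} the induced swap map $\delta$ lies in $\Aut(\Gamma)$ and normalizes the semiregular group $N$.

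The heart of the argument is showing $\Gamma$ is $G$-oriented, which I would do exactly as in Lemma \ref{firstConstructionLemma}. The linear map underlying $\delta$ is rotation by a quarter turn, and because $p \equiv 3 \pmod 4$ the element $-1$ is a non-square in $\mathbb{Z}_p$, so this matrix has order $4$ (its square is $-I$, i.e.\ $(x,y)\mapsto(-x,-y)$, which is not the identity and has order $2$). Thus $\delta$ has order $4$ and the stabilizer of $(0,0)_0$ is $\langle \delta^2\rangle \cong C_2$, acting on the neighbourhood of $(0,0)_0$ with two orbits of length two, namely $\{(1,0)_1, (-1,0)_1\}$ and $\{(0,1)_1, (0,-1)_1\}$. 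I would then enumerate the elements of $G = N\rtimes\langle\delta\rangle$ carrying $(0,0)_0$ to a fixed neighbour, say $(1,0)_1$: these must involve odd powers of $\delta$ (to swap the two biparts), giving exactly two candidates, and I would compute that they send $(1,0)_1$ to two \emph{distinct} vertices in $N_0$, neither of which is $(0,0)_0$. This shows no element of $G$ reverses an edge, so $G$ is not arc-transitive and hence $(\Gamma,G)\in\OG(4)$.

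Finally, for the biquasiprimitive and socle conclusions, I would identify all proper nontrivial normal subgroups of $G$. The key point is that, since $p \equiv 3 \pmod 4$, the rotation $\delta^2 = -I$ acts on $N = \mathbb{Z}_p^2$ with no nonzero fixed vectors and no invariant one-dimensional subspace (an invariant line would force an eigenvalue in $\mathbb{Z}_p$, but the eigenvalues of a quarter-turn are primitive fourth roots of unity, which do not lie in $\mathbb{Z}_p$ when $-1$ is a non-square). Hence conjugation by $G_\alpha = \langle\delta^2\rangle$ fixes no proper nontrivial subgroup of $N$, so $N = \mathbb{Z}_p^2$ is the unique minimal normal subgroup of $G$, realizing case (a) with $k=2$. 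The only proper nontrivial normal subgroups are then $N$ and $N\langle\delta^2\rangle$, whence $(\Gamma, G)$ is basic of biquasiprimitive type.

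I expect the main obstacle to be the orientation verification, that is, confirming that the two elements of $G$ mapping $(0,0)_0$ to a chosen out-neighbour genuinely send that neighbour to two distinct non-origin vertices; this requires a careful bookkeeping of the semidirect-product multiplication $n\delta^i$ together with the action of the quarter-turn matrix, and it is exactly here that the hypothesis $p\equiv 3\pmod 4$ (equivalently, $-1$ a non-square) is doing the essential work to prevent any edge-reversing symmetry. The irreducibility of the $\langle\delta^2\rangle$-action, needed for uniqueness of the minimal normal subgroup, rests on the same number-theoretic fact and should follow cleanly once that is in place.
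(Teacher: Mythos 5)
Your overall plan matches the paper's proof almost line for line: the connectivity check via $\langle S^2\rangle=N$, the appeal to Proposition \ref{AutBiCay} to get $\delta\in\Aut(\Gamma)$, the identification of the vertex stabilizer $\langle\delta^2\rangle\cong C_2$ with its two orbits of length $2$, and the enumeration of the two candidates $(0,1)\delta$ and $(0,-1)\delta^3$ carrying $(0,0)_0$ to $(1,0)_1$ to rule out edge reversal. All of that is correct and is exactly what the paper does.

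There is, however, a genuine error in your final step. You claim that ``the rotation $\delta^2=-I$ acts on $N=\mathbb{Z}_p^2$ with \dots no invariant one-dimensional subspace,'' and conclude that conjugation by $G_\alpha=\langle\delta^2\rangle$ fixes no proper nontrivial subgroup of $N$. This is false: $-I$ is a scalar map, so it fixes \emph{every} subgroup of $N$ setwise (for instance $\langle(1,0)\rangle$ is $\delta^2$-invariant). Your parenthetical about eigenvalues being primitive fourth roots of unity is the right idea, but it applies to the quarter-turn $\delta$ itself, not to its square. The correct argument --- and the one the paper gives --- is that a proper nontrivial subgroup of $N$ is normal in $G$ if and only if it is invariant under $\delta$ (invariance under $N$ is automatic since $N$ is abelian); the two coordinate axes are swapped by $\delta$, and any other line $\langle(1,x)\rangle$ satisfies $\langle(1,x)\rangle^\delta=\langle(x,-1)\rangle$, so invariance forces $c(1,x)=(x,-1)$ for some scalar $c$, hence $c=x$ and $x^2\equiv-1\pmod p$, which is impossible for $p\equiv 3\pmod 4$. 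Equivalently, the matrix of $\delta|_N$ has characteristic polynomial $\lambda^2+1$ with no root in $\mathbb{F}_p$. With that substitution your proof goes through; note also that along the way you must still observe (as the paper does) that the nontrivial normal subgroups of $G^+=N\rtimes\langle\delta^2\rangle$ are exactly $N$ and the order-$p$ subgroups of $N$, precisely because $\delta^2=-I$ normalizes all of them --- the fact your stated claim contradicts. A minor additional slip: $\delta$ has order $4$ for every odd prime $p$ (since $\delta^2=-I\neq I$); the hypothesis $p\equiv 3\pmod 4$ is not needed for that, only for the irreducibility of the $\langle\delta\rangle$-action.
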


\begin{proof}
	First note that $|S| = 4$ and $\langle S \rangle = \langle S^2\rangle = N$  so $\Gamma$ is 4-valent and connected. Also by Proposition \ref{AutBiCay}, $\delta \in \Aut(\Gamma)$. Furthermore, the automorphism  $\delta$ has order $4$, and for the vertex $\alpha = (0,0)_0$, we have $G_\alpha = \langle\delta^2\rangle \cong C_2$ with two orbits of length two on the neighbourhood of $\alpha$. Moreover any automorphism in $G$ taking the vertex $(0,0)_0$ to its neighbour $(1,0)_1$ must be either $g_1 = (0,1)\delta$ or $g_2 = (0,-1)\delta^3$ where $(0,1)$ and $(0,-1)$ are elements of $N$. However, neither of these automorphisms maps $(1,0)_1$  to $(0,0)_0$ and so no $g\in G$ can reverse edges of $\Gamma$ and $(\Gamma, G) \in \OG(4)$.
	
	To show that $(\Gamma, G)$ is basic of biquasiprimitive type, notice that the setwise stabilizer $G^+$ of the two parts $N_0$ and $N_1$ of $V\Gamma$ is $N \rtimes\langle \delta^2 \rangle$, with $\delta^2$ acting as inversion  on $N$. Hence the nontrivial normal subgroups of $G^+$ are $N$, and the subgroups of $N$ isomorphic to $\mathbb{Z}_p$ (all intransitive on $N_0$ since $N$ is regular). Therefore we need to check that none of the subgroups of $N$ of order $p$ is normal in $G$. 
	
	To this end, notice that the subgroups corresponding to the direct factors of $N$ are swapped by conjugation by $\delta$ in $G$, and hence aren't normal. All other nontrivial proper subgroups of $N$ are of the form  $\langle(1, x)\rangle$ with $x \in \mathbb{Z}^*_p$. Hence if $\langle(1,x)\rangle^\delta = \langle(x,-1)\rangle = \langle(1,x)\rangle,$ then $c(1,x) = (x,-1)$ for some $c \in \mathbb{Z}^*_p$. It follows that $c = x$ and so $x^2 \equiv -1 $ mod $p$, but this is impossible since $p \equiv 3$ mod 4. Thus the only proper non-trivial normal subgroups of $G$ are $N$ and $G^+$, both of which are transitive on the two biparts of $V\Gamma$.
\end{proof}

Next we give constructions of biquasiprimitive basic pairs $(\Gamma, G)\in \OG(4)$ with $\soc(G)$ nonabelian. Note that any nonabelian simple group $T$ can be generated by an involution and an element of prime order \cite{king2017generation}. In particular all nonabelian simple groups can be generated by two elements. In each of our constructions of biquasiprimitive pairs with nonabelian socle we will use a simple group $T$ and a generating pair $\{a,b\}$ with prescribed properties. 

We begin with constructions of biquasiprimitive basic pairs $(\Gamma, G)\in \OG(4)$ with $\soc(G)$ nonabelian and as described in Theorem \ref{MainResult} case (b). 

\begin{Construction}\label {Nonabelian k=1} Let $T$ be a nonabelian simple group, and let $\{a,b\}$ be a generating set for $T$ where $a$ is an involution and the elements $b$ and $ab$ have odd order. Let $N = T$, $S_0 = \{ab, ba\}$, $S= S_0\cup S_0^{-1}$, and let $\Gamma = \BiCay(N, \emptyset, \emptyset, S)$. Define two permutations  $\delta$ and $ \sigma $ of $V\Gamma$ where $x^\delta_\varepsilon = (x^a)_{1-\varepsilon}$, and $x^\sigma_\varepsilon = (x^a)_{\varepsilon}$ for $\varepsilon \in \{0,1\}$, and  set $G := N \rtimes\langle \sigma, \delta\rangle$.
\end{Construction} 
\begin{remark}
	For an explicit example of a simple group $T$ and generating set $\{a,b\}$ as in Construction \ref{Nonabelian k=1} take $T$ to be the alternating group Alt($n$) for odd $n\geq 5$, and let $a = (12)(34)$ and $b = (12 \dots n)$.
\end{remark}
\begin{Lemma}\label{k=1lemma}
	For $\Gamma ,G$ as in Construction \ref{Nonabelian k=2}, $(\Gamma, G) \in \OG(4)$ and is basic of biquasiprimitive type with $\soc(G)$ as described in Theorem \ref{MainResult} case (b) with $k = 1$. 
\end{Lemma}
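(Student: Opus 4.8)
The plan is to follow Method~\ref{BiMethod}: first confirm that $\Gamma$ is connected and $4$-valent, then that $(\Gamma,G)\in\OG(4)$, and finally that $N=T=\soc(G)$ is the unique minimal normal subgroup of $G$, which makes $(\Gamma,G)$ biquasiprimitive. The structural conclusion (case (b) of Theorem~\ref{MainResult} with $k=1$) is then read off from Proposition~\ref{twotypes}, since $N\cong T$ is simple.

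First I would record the elementary structure of $S=\{ab,ba,(ab)^{-1},(ba)^{-1}\}$. Because $a^2=1$ one has $(ab)^{-1}=b^{-1}a$ and $(ba)^{-1}=ab^{-1}$, so $S=S^{-1}$ automatically. To get valency exactly $4$ I would verify $|S|=4$ by ruling out coincidences among these four elements: each possible equality forces either $b^2=1$, or $(ab)^2=1$, or that $a$ and $b$ commute, each of which contradicts the hypotheses that $b$ and $ab$ have odd order greater than $1$ and that $T=\langle a,b\rangle$ is nonabelian. For connectivity I would use the criterion $\langle S^2\rangle=N$ of Subsection~\ref{ssBiCay}; the key relation is $(ba)(ab)=ba^2b=b^2\in\langle S^2\rangle$, and since $b$ has odd order $\langle b^2\rangle=\langle b\rangle$, giving $b\in\langle S^2\rangle$. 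Likewise $(ab)^2\in\langle S^2\rangle$ together with the odd order of $ab$ yields $ab\in\langle S^2\rangle$, whence $a=(ab)b^{-1}\in\langle S^2\rangle$ and therefore $\langle S^2\rangle\supseteq\langle a,b\rangle=T=N$, so $\Gamma$ is connected.

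Next I would show $(\Gamma,G)\in\OG(4)$ using the criterion recalled in Subsection~\ref{ssBiCay} (vertex-transitive, stabiliser with two orbits of size two, no edge reversed). Conjugation by $a$ interchanges $ab\leftrightarrow ba$ and $b^{-1}a\leftrightarrow ab^{-1}$, so $S^{\iota_a}=S$, and Proposition~\ref{AutBiCay} then guarantees that $\sigma=\sigma_{\iota_a}$ and $\delta=\delta_{\iota_a}$ are automorphisms of $\Gamma$ normalising $N$, placing $G$ inside $N_{\Aut(\Gamma)}(N)$ as Method~\ref{BiMethod} requires. Vertex-transitivity is clear since $N$ is transitive on each bipart and $\delta$ interchanges them. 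I would then compute the stabiliser of $\alpha=(1_N)_0$, check it is the $2$-group $\langle\sigma\rangle\cong C_2$, and exhibit its two orbits of length two on $\Gamma(\alpha)$, namely $\{(ab)_1,(ba)_1\}$ and $\{(b^{-1}a)_1,(ab^{-1})_1\}$. Finally, exactly as in the proof of Lemma~\ref{firstConstructionLemma}, I would enumerate the few elements of $G$ carrying $\alpha$ to a prescribed neighbour and verify that none of them reverses the corresponding edge, so $G$ is edge- but not arc-transitive and $\Gamma$ is $G$-oriented.

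The substantive step, and the one I expect to be the main obstacle, is establishing that $N=T$ is the \emph{unique} minimal normal subgroup of $G$, so that the pair is genuinely biquasiprimitive and not merely bipartite. Here I would set $G^+=N\langle\sigma\rangle$ (the index-two subgroup fixing the biparts, faithful on each by Lemma~\ref{G^+ faithful}) and let $H=(G^+)^\Delta$ be the induced action on one part; the goal is to show $\soc(G^+)\cong T$ is a minimal normal subgroup of $G$, i.e.\ that $H$ is quasiprimitive with simple socle $T$, so that Proposition~\ref{twotypes}(a) applies and forces $N=\soc(G)$ to be unique with $k=1$. The delicate point is precisely the control of the normal subgroup structure of the complement $\langle\sigma,\delta\rangle$ relative to $N$: one must verify that the order-two part does not combine with $N$ to produce an additional normal subgroup (so that no normal complement splits off $N$), equivalently that $C_G(N)=1$ and every nontrivial normal subgroup of $G^+$ meets $N$ nontrivially and hence contains it. Once this is confirmed, Proposition~\ref{twotypes} places the pair in its case (a), which is exactly case (b) of Theorem~\ref{MainResult} with $T$ nonabelian simple and $k=1$, completing the proof.
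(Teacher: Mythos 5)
Your treatment of the combinatorial part tracks the paper's proof closely and is sound: the coincidence analysis giving $|S|=4$, the computation $(ba)(ab)=b^2$ together with the odd orders of $b$ and $ab$ to get $\langle S^2\rangle=N$, the appeal to Proposition~\ref{AutBiCay} for $\sigma,\delta\in\Aut(\Gamma)$, and the enumeration of the two elements carrying $(1_N)_0$ to $(ab)_1$ to rule out edge reversal are all exactly the paper's steps (the paper uses $g_1=(ab)\sigma\delta$ and $g_2=(ba)\delta$ for that last check).

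The problem is the final step. You correctly isolate the substantive claim --- that $N=T$ is the \emph{unique} minimal normal subgroup of $G$, equivalently that $C_G(N)=1$ so that no normal subgroup splits off from $N$ --- but you never verify it; you write ``once this is confirmed'' and stop. This is a genuine gap rather than a routine omission, because the verification is precisely where the difficulty lies. Concretely, $\sigma$ and $\delta$ are commuting involutions, so $\langle\sigma,\delta\rangle\cong C_2\times C_2$, and their product $\sigma\delta$ is the pure swap $x_\varepsilon\mapsto x_{1-\varepsilon}$ (the two conjugations by $a$ cancel). This permutation is an automorphism of $\BiCay(N,\emptyset,\emptyset,S)$ whenever $S=S^{-1}$, it commutes with the right-regular action of $N$ and with both $\sigma$ and $\delta$, so $\langle\sigma\delta\rangle$ is a central subgroup of $G=N\rtimes\langle\sigma,\delta\rangle$ of order $2$ whose orbits on $V\Gamma$ are the $|T|$ pairs $\{x_0,x_1\}$. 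Hence the condition you defer ($C_G(N)=1$, every nontrivial normal subgroup of $G$ containing $N$) actually fails for the group as you (and the construction) define it, and the proof cannot be completed along the proposed lines without addressing this subgroup. Note that the paper's own argument only observes that $\langle\sigma\rangle$ and $\langle\delta\rangle$ are not normal in $G$ and is silent on $\langle\sigma\delta\rangle$; a complete proof must either dispose of this central involution or work with a modified overgroup of $N$, and your proposal as written does neither.
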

\begin{proof}
	Since $N$ is nonabelian and the order of $b$ is odd, it follows that $S_0 \cap S_0^{-1} = \emptyset$ and hence that $|S|= 4$ and $\Gamma$ is 4-valent. Again, using the fact that $b$ has odd order it is easy to check that $a,b \in \langle S \rangle$ and hence that $\langle S \rangle = N$. Now consider $S^2$. This set contains the elements $abab, b^2$ and $baba$. In particular, $\langle S^2 \rangle$ contains $b$ and hence also contains $aba$. Since $aba$ and $abab$ are contained in $\langle S^2 \rangle$ and the order of $ab$ is odd, it follows that $a \in \langle S^2 \rangle $ and hence $ \langle S^2 \rangle  = N$. Therefore $\Gamma $ is connected.
	
	Next, notice that both $\sigma $ and $\delta$ are induced by conjugation by $a$ in $N$ and this automorphism fixes $S$ setwise. Hence $\sigma$ and $\delta$ are automorphisms of $\Gamma$ by Proposition \ref{AutBiCay}. The stabilizer of the vertex $(1_N)_0$ is $\langle \sigma \rangle$ with two orbits on the neighbours of $(1_N)_0$, namely $\{(ab)_1, (ba)_1\}$ and $\{(b^{-1}a)_1, (ab^{-1})_1\}$.  Furthermore a straightforward check shows that the only automorphisms in $G$ mapping $1_0$ to $(ab)_1$ are $g_1 = (ab)\sigma\delta$ and $g_2 = (ba)\delta$ (where $(ab)$ and $(ba)$ are automorphisms contained in $N$) and neither of these map $(ab)_1$ to $1_0$.
	This implies that $\Gamma$ is $G$-oriented and hence that $(\Gamma, G) \in \OG(4)$. 
	
	Now notice that neither $\langle \sigma \rangle$ nor $\langle \delta \rangle$ is normal in $G$. On the other hand, $N$ is a normal (and hence minimal normal) subgroup of $G$, and is the unique such subgroup. Since $N$ clearly has two orbits on $V\Gamma$, it follows that $G$ is biquasiprimitive on the vertices of $\Gamma$.
\end{proof}

%The following construction is similar to  \cite[Construction 4.7]{al2015finite}.

\begin{Construction}\label {Nonabelian k=2} Let $T$ be a nonabelian simple group, and let $\{a,b\}$ be a generating set for $T$ such that no automorphism of $T$ swaps $a$ and $b$, and the elements $a$ and $b$ have odd order. Let $N = T\times T$, $S_0 = \{(a,b),(b,a)\}$, $S= S_0\cup S_0^{-1}$, and let $\Gamma = \BiCay(N, \emptyset, \emptyset, S)$. Define two permutations  $\delta$ and $\sigma$ of $V\Gamma$ where $(x,y)^\delta_\varepsilon = (y,x)_{1-\varepsilon}$, and $(x,y)^\sigma_\varepsilon = (y,x)_{\varepsilon}$ for $\varepsilon \in \{0,1\}$. Set $G := N \rtimes\langle \sigma, \delta\rangle$.
	
\end{Construction} 
\begin{remark}
	For an explicit example of a simple group $T$ and generating set $\{a,b\}$ as in Construction \ref{Nonabelian k=2} take $T$ to be the alternating group Alt($n$) for odd $n$, and let $a = (123)$ and $b = (12 \dots n)$.
\end{remark}

\begin{Lemma}\label{k=2lemma}
	For $\Gamma ,G$ as in Construction \ref{Nonabelian k=2}, $(\Gamma, G) \in \OG(4)$ and is basic of biquasiprimitive type with $\soc(G)$ as described in Theorem \ref{MainResult} case (b) with $k = 2$. 
\end{Lemma}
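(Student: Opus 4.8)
The plan is to verify the three steps of Method~\ref{BiMethod}, following closely the template of Lemma~\ref{k=1lemma}. First I would check that $(\Gamma,G)\in\OG(4)$. For the valency, since $a,b$ have odd order (so $a\neq a^{-1}$, $b\neq b^{-1}$) and no automorphism of $T$ swaps them (so $b\neq a^{-1}$), the four elements $(a,b),(b,a),(a^{-1},b^{-1}),(b^{-1},a^{-1})$ of $S=S_0\cup S_0^{-1}$ are distinct, whence $\Gamma$ is $4$-valent. For connectivity it suffices (as in Subsection~\ref{Methods}) to show $\langle S^2\rangle=N$. I would first argue $\langle S\rangle=N$: both coordinate projections of $\langle S\rangle$ are onto $T$, so by Goursat's lemma $\langle S\rangle$ is either $N$ or the graph $\{(t,t^\psi):t\in T\}$ of an automorphism $\psi$ of $T$; the latter forces $a^\psi=b$ and $b^\psi=a$, i.e.\ $\psi$ swaps $a$ and $b$, contrary to hypothesis. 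Since $(a,b)$ and $(b,a)$ have odd order, each generator of $S$ lies in the cyclic group generated by its square, so $\langle S^2\rangle=\langle S\rangle=N$ and $\Gamma$ is connected.

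Next, $\sigma$ and $\delta$ are induced by the coordinate-swap automorphism $\psi$ of $N=T\times T$, which fixes $S$ setwise, so by Proposition~\ref{AutBiCay} they lie in $\Aut(\Gamma)$ and normalise $N$. A direct computation gives $\sigma^2=\delta^2=1$ and $\langle\sigma,\delta\rangle\cong C_2\times C_2$, with vertex stabiliser $G_\alpha=\langle\sigma\rangle$ for $\alpha=(1,1)_0$; this has the two length-two orbits $\{(a,b)_1,(b,a)_1\}$ and $\{(a^{-1},b^{-1})_1,(b^{-1},a^{-1})_1\}$ on $\Gamma(\alpha)$. Finally, as in Lemma~\ref{k=1lemma}, I would list the two elements of $G$ sending $\alpha$ to the neighbour $(a,b)_1$ and check that neither returns $(a,b)_1$ to $\alpha$, so no element of $G$ reverses an edge; together with vertex-transitivity and the two-orbits property, the criterion recalled in the preliminaries yields $(\Gamma,G)\in\OG(4)$. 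The part-preserving subgroup is $G^+=N\rtimes\langle\sigma\rangle$, and $N=T\times T$ is normal in $G$ with the two orbits $N_0,N_1$, so $\Gamma$ has a $K_2$ normal quotient. To place the pair in case~(b) of Theorem~\ref{MainResult} with $k=2$, note that the only normal subgroups of $N=T\times T$ are $1,\,T\times 1,\,1\times T,\,N$ (as $T$ is nonabelian simple), and conjugation by $\sigma$ interchanges the two simple direct factors, so the only $\sigma$-invariant ones are $1$ and $N$; hence $N$ is a minimal normal subgroup of $G^+$, matching case~(a) of Proposition~\ref{twotypes} with $M=\soc(H)\cong T^2$.

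The main obstacle is the final point: showing that $N$ is the \emph{unique} minimal normal subgroup of $G$, and not merely a minimal normal subgroup of $G^+$. For this I would have to rule out any nontrivial normal subgroup of $G$ meeting $N$ trivially, i.e.\ analyse $C_G(N)$ and the action of the coset $G\setminus G^+$ on the minimal normal subgroups of $G^+$, using that $\delta$ swaps the two simple factors of $N$ to fuse any would-be smaller $G$-normal subgroup into $N$ itself. The hypotheses that $a$ and $b$ have odd order and that no automorphism of $T$ swaps them are exactly what make both this normal-subgroup analysis and the connectivity step go through; once uniqueness is established, $G$ is biquasiprimitive with $\soc(G)=N\cong T^2$, which is case~(b) with $k=2$.
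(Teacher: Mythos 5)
Your proposal tracks the paper's proof closely for most of its length: the four-element count for $S$, the connectivity argument via $\langle S\rangle=\langle S^2\rangle=N$ (ruling out a diagonal subgroup using the hypothesis that no automorphism swaps $a$ and $b$), the appeal to Proposition \ref{AutBiCay}, the identification of $G_\alpha=\langle\sigma\rangle$, and the check that the two elements sending $(1_N)_0$ to $(a,b)_1$ do not reverse that edge are all exactly the steps the paper takes. (One small slip: $b\neq a^{-1}$ follows from $T=\langle a,b\rangle$ being nonabelian, not from the no-swapping-automorphism hypothesis.) The genuine gap is the one you yourself flag: you never carry out the final step. Establishing that $(\Gamma,G)$ is basic of biquasiprimitive type requires showing that \emph{every} nontrivial normal subgroup of $G$ has at most two vertex-orbits, and your proposal only records that you ``would have to'' analyse $C_G(N)$ and the fusion of the minimal normal subgroups of $G^+$ by $G\setminus G^+$. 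As written, the statement is not proved.

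Moreover, the analysis you defer is exactly where the difficulty sits, and it does not go the way you (or the paper) hope. Compute $\sigma\delta$: it sends $(x,y)_\varepsilon$ first to $(y,x)_\varepsilon$ and then to $(x,y)_{1-\varepsilon}$, so $\sigma\delta$ fixes the group coordinate and merely swaps the two biparts. This permutation commutes with right multiplication by every element of $N$ and with both $\sigma$ and $\delta$ (which commute with each other), so $\sigma\delta$ is a central involution of $G=N\rtimes\langle\sigma,\delta\rangle$ lying outside $N$. Hence $\langle\sigma\delta\rangle$ is a normal subgroup of $G$ of order $2$ whose orbits are the $|T|^2$ pairs $\{(x,y)_0,(x,y)_1\}$ --- far more than two orbits --- so $G$ is not biquasiprimitive on $V\Gamma$. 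The paper's own proof asserts that $N$ ``is the unique minimal normal subgroup'' without examining $C_G(N)$, and that assertion fails for this reason; your instinct that uniqueness is ``the main obstacle'' is therefore correct, but the obstacle cannot be overcome for the group $G$ as defined in Construction \ref{Nonabelian k=2} (the same issue affects Construction \ref{Nonabelian k=1}). Any repair must modify the complement $\langle\sigma,\delta\rangle$ so that the product of its part-preserving and part-swapping generators does not centralise $N$, as happens with the order-$4$ element $\delta$ in the abelian Constructions \ref{Abelian k=1} and \ref{Abelian k=2}.
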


\begin{proof}
	
	First notice that $S_0\cap S_0^{-1} = \emptyset$ 
	since if $(a,b)^{-1} = (a,b)$, then both $a$ and $b$ are involutions, while if $(a,b)^{-1} = (b,a)$ then $\langle a,b \rangle = \langle a \rangle$ is cyclic, and neither of these is possible. In particular, $|S| = 4$ and $\Gamma$ is 4-valent.
	
	To see that $\Gamma$ is connected consider the following. The projections of $\langle S\rangle $ onto the simple direct factors of $N = T\times T$ are both equal to the group $\langle a, b \rangle = T$. Hence either $\langle S \rangle = N$ or $\langle S\rangle  = \{(t,t^\varphi), t\in T\}$ for some $\varphi \in \Aut(T)$. In the latter case, $(a,b) = (a, a^\varphi)$ so $b= a^\varphi$, but also $(b,a) = (b, b^\varphi)$ so $a = b^\varphi$, but by our assumption no such automorphism $\varphi$ exists. Hence $N = \langle S\rangle$. Finally, notice that since both $a$ and $b$ have odd order, we have $(a,b) \in \langle (a^2,b^2) \rangle$ (and similarly $(b,a) \in \langle (b^2,a^2) \rangle$). In particular both $(a,b)$ and $(b,a)$ are contained in $\langle S^2 \rangle $, so $N = \langle S\rangle = \langle S^2 \rangle $, and $\Gamma$ is connected.
	
	Once again Proposition \ref{AutBiCay} implies that $\sigma, \delta \in \Aut(\Gamma)$. Now it is clear that $G$ acts transitively on the vertices of $\Gamma$ and the stabilizer of the vertex $(1_N)_0$ is exactly $\langle \sigma \rangle \cong C_2$ with two orbits on the neighbourhood of $(1_N)_0$. Moreover, it is easy to check that no automorphism can reverse edges as follows. The only automorphisms taking $(1_N)_0$ to $(a,b)_1$ are $g_1 = n_1\sigma\delta$ and $g_2 = n_2\delta$ where $n_1 = (a,b)$ and $n_2 = (b,a)$ are elements of $N$. Since neither of these maps $(a,b)_1$ to $(1_N)_0$, it follows that $\Gamma$ is $G$-oriented and $(\Gamma, G) \in \OG(4)$.
	
	Finally, since conjugation by $\sigma$ in $G$ interchanges the two simple direct factors of $N$, it follows that $N$ is a minimal normal subgroup of $G$ and so is the unique minimal normal subgroup. Of course, $N$ has two orbits on $V\Gamma$, thus $G$ is biquasiprimitive on the vertices of $\Gamma$.
\end{proof}

Next we give a construction of biquasiprimitive basic pairs as described in Theorem \ref{MainResult} case (b) with $k = 4$. This time we will use Method \ref{CosetMethod}. We will use the same simple group $T$ and generating pair $\{a,b\}$ in Constructions \ref{Nonabelian k=4}, \ref{Nonabelian ell=2} and \ref{Nonabelian ell=4}. Hence we begin with the following important remark.

\begin{remark}\label{PSLgen}
	For a prime $p\geq 7$ let $T$ denote the simple group $\PSL(2,p)$. Then $T$ is generated by two elements $a$ and $b$ where 
	$$a:=\begin{pmatrix} 0 & 1\\-1 & 0  \end{pmatrix} \hbox{ and } b:= \begin{pmatrix} 0 & 1\\-1 & 1  \end{pmatrix}.$$
	Moreover $a$ and $b$ have orders 2 and 3 respectively, while $ab$ and $ab^2$ have order $p$, \cite[Section 7.5]{coxeter2013generators}.
\end{remark}

\begin{Construction}\label{Nonabelian k=4} For a prime $p \geq 7$ let $T$ denote the simple group $\PSL(2,p)$ generated by two elements $a$ and $b$ such that $a$ and $b$ have orders $2$ and $3 $ respectively while  $ab$ and $ab^2$ have order $p$.
	Take the group $T \wr S_4$ with $S_4$ acting by permuting the four direct factors of $T^4$ and define the following elements of this group  $$ \tilde{\varphi} := (b,ba,ab,aba)(13),$$
	$$y:= \tilde{\varphi}^2  = (bab,baba,ab^2,ab^2a),$$
	$$h_1 := (a,a,a,a)(12)(34),$$
	$$h_2 := h_1^{\tilde{\varphi}} = (b^{-1}aba,ab^{-1}ab,b^{-1}aba,ab^{-1}ab)(14)(23).$$
	Now let $V := \langle h_1, h_2 \rangle $ and define the subgroup $H := T^4 \rtimes V \leq T \wr S_4$. Notice that conjugation by $\tilde{\varphi}$ in  $T \wr S_4$ induces an automorphism $\varphi \in \Aut(H)$, in particular $\varphi^2$ is the inner automorphism of $H$ corresponding to conjugation by $y \in H$.
	
	%Finally let $G^+ := \Diag_{\varphi}(H\times H)$, $S := \Diag_{\varphi}(V \times V)$, $g := (y,1)(12)\in  H \wr S_2$, and construct the graph-group pair $(\Gamma, G)$ where $G := \langle G^+, g \rangle \leq H \wr S_2$ and $\Gamma := \Cos(G, S, g)$ (as in Construction \ref{GeneralCoset}).
	Finally apply Construction \ref{GeneralCoset} using $H, V, y $ and $ \varphi$ to get the pair $(\Gamma, G)$.
\end{Construction} 

\begin{Lemma}
	Let $\Gamma ,G$ be as in Construction \ref{Nonabelian k=4}. Then $(\Gamma, G) \in \OG(4)$ and is basic of biquasiprimitive type with $\soc(G)$ as described in Theorem \ref{MainResult} case (b) with $k = 4$.
\end{Lemma}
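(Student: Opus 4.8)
The plan is to verify that the pair $(\Gamma, G)$ produced by Construction \ref{Nonabelian k=4} satisfies the three steps of Method \ref{CosetMethod}, namely that $(H, V, y, \varphi)$ is appropriate and satisfies condition (2) of Lemma \ref{cond2}, and that $\soc(G^+) \cong M := T^4$ is a minimal normal subgroup of $G$. Since Lemma \ref{cond2} and Corollary \ref{CorMin} then do all the heavy lifting, the entire proof reduces to checking a handful of concrete group-theoretic facts about the explicit elements $h_1, h_2, y, \tilde{\varphi}$ inside $T \wr S_4$ with $T = \PSL(2,p)$.

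First I would confirm that $(H, V, y, \varphi)$ is appropriate: that $\varphi$ is a genuine automorphism of $H = T^4 \rtimes V$ with $\varphi^2 = \iota_y$. This is immediate from the construction, since $\varphi$ is induced by conjugation by $\tilde\varphi \in T \wr S_4$ and $y = \tilde\varphi^2 \in H$; one only needs to check that conjugation by $\tilde\varphi$ normalizes $H$, i.e.\ that $\tilde\varphi$ normalizes $V = \langle h_1, h_2\rangle$ (it does, since $h_2 = h_1^{\tilde\varphi}$ and $h_1^{\tilde\varphi^2} = h_1^y$ lies in $V$ because $y \in H$ centralizes the top-group part). Next I would verify the four conditions of (2). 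The identity $h_1^{\tilde\varphi} = h_2$ forces the top-group permutations $(12)(34)$ and $(14)(23)$ to generate, together with the conjugating permutation $(13)$, enough of $S_4$; combined with the fact that $\langle a, b\rangle = T$ in each coordinate, this should give $\langle V, y\rangle = H$. For $|V : V \cap V^\varphi| = 2$ I would compute $V^\varphi = \langle h_1^\varphi, h_2^\varphi\rangle = \langle h_2, h_1^y\rangle$ and show the intersection has index exactly $2$; since $V$ is a $2$-group this is a finite check on the images of $h_1, h_2$ under $\pi$ into $\Sym(4)$ and on their base-group components. Core-freeness of $V$ in $H$ follows because $V$ projects faithfully but $M = \soc(H) = T^4$ meets $V$ trivially (the base-group components of $h_1, h_2$ are not genuinely in $T^4 \cap V$ once one quotients by the top group), so no nontrivial normal subgroup of $H$ sits inside $V$. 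Finally $y \notin V V^\varphi$ is a direct computation comparing base-group coordinates.

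The main obstacle, and the step I would dwell on, is showing $\soc(G^+) \cong M = T^4$ is a \emph{minimal} normal subgroup of \emph{$G$} (not merely of $G^+$), which is what makes $(\Gamma, G)$ genuinely biquasiprimitive of case (b) rather than case (c). By Proposition \ref{twotypes}, case (b) arises exactly when $M = \soc(H)$ is itself a minimal normal subgroup of the quasiprimitive group $H$, so I must confirm that $V = \langle h_1, h_2\rangle$ acts transitively on the four simple direct factors of $T^4$. Reading off the top-group images, $h_1 \mapsto (12)(34)$ and $h_2 \mapsto (14)(23)$ generate a transitive (Klein four) subgroup of $\Sym(4)$, so $V$ permutes the four factors transitively and $M = T^4$ is minimal normal in $H$; then $N = \Diag_\varphi(M \times M) \cong T^4$ is minimal normal in $G$ with $k = 4$. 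Here I would need to double-check that no automorphism of $H$ (via $g$) can fuse $M$ with a complement, which is guaranteed because $T$ is nonabelian simple and $M = \soc(H)$ is characteristic in $H \cong G^+$, so it is normal in $G$ and its minimality in $H$ forces minimality in $G$.

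Once these facts are in place, Method \ref{CosetMethod} yields $(\Gamma, G) \in \OG(4)$ biquasiprimitive, and the transitivity of $V$ on the four factors of $T^4$ places us in case (b) with $k = 4$. I would close by remarking that the infinitude of examples comes from letting the prime $p \geq 7$ vary over the infinitely many choices in Remark \ref{PSLgen}, each giving a distinct simple group $T = \PSL(2,p)$ and hence a non-isomorphic pair.
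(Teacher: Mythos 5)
Your overall strategy is exactly the paper's: verify that $(H,V,y,\varphi)$ is appropriate and satisfies condition (2) of Lemma \ref{cond2}, then use Corollary \ref{CorMin} together with the transitivity of $V$ on the four simple direct factors of $T^4$ to conclude biquasiprimitivity in case (b) with $k=4$. The core-freeness and minimality arguments, while phrased differently from the paper (which simply notes $V\cap V^y=1$ and that $H$ permutes the factors of $\soc(H)$ transitively), are recoverable.

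However, there is a genuine gap at the step $\langle V,y\rangle = H$, which is in fact the main computational content of the lemma. You argue that the top-group permutations generate enough of $\Sym(4)$ and that ``$\langle a,b\rangle = T$ in each coordinate'' should give generation. That is not sufficient: the subgroup $\langle y,\, y^{h_1},\, y^{h_2}\rangle \leq T^4$ is subdirect (it projects onto each coordinate), but a subdirect subgroup of a power of a nonabelian simple group need not be the full product --- it can be a product of full diagonal subgroups across blocks of coordinates. One must explicitly rule out the diagonal possibilities, and this is where the paper spends most of its effort: it compares the orders of the entries of $y$, $y_1:=y^{h_1}$ and $y_2:=y^{h_2}$ coordinate by coordinate (the entries $ab^2a$, $b^2$, $b^2ab^2ab$ have order $3$ while $bab$, $abab$, $b^2abab^2$ and their conjugates have order $p$), observing that the positions of the order-$3$ entries are incompatible with any identification of two coordinates by an isomorphism of $T$. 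Without an argument of this kind your verification of the last clause of condition (2) does not go through, and with it the membership $(\Gamma,G)\in\OG(4)$ is not yet established.
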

\begin{proof}
	Since Construction \ref{Nonabelian k=4} is a special case of Construction \ref{GeneralCoset}, in order to show that $(\Gamma, G) \in \OG(4)$ it suffices to show that condition (2) of Lemma \ref{cond2} is satisfied.
	
	First notice that $V \cong \mathbb{Z}_2^2$ since $h_1$ and $h_2$ are commuting involutions. Also $V$ is core-free in $H$ since for instance $V\cap V^y = 1$. It is also easy to check that $V\cap V^\varphi = \langle h_2\rangle$, and so $|V: V\cap V^\varphi| = 2$.
	
	Now suppose that $y \in VV^\varphi$ so that $y = vu$ for some $v\in V,$ and $ u \in V^\varphi$. This implies that $vu\in T^4$, meaning that if we take $\pi$ to be the projection map $H \rightarrow S_4$, then $\pi(v) = \pi(u)$. Hence the only possibilities for $(v,u)$ such that $y = vu$ that need to be considered are $(h_1,h_2^\varphi), (h_2,h_2),$ and $  (h_1h_2,h_2h_2^\varphi)$, the second possibility gives $h_2^2 = 1 \neq y$, while the first and third of these possibilities both give $y = h_1h_2^\varphi$. It is easy to check however that $h_1h_2^\varphi = h_1h_1^y$ has $bab^2$ in its third coordinate while $y$ has $ab^2$ in its third coordinate. Hence $y \notin VV^\varphi$.
	
	Left to show is that  $\langle V, y\rangle = H$.
	To prove this claim it is sufficient to show that $T^4 \leq \langle V, y\rangle$. To this end, let $y_1 := y^{h_1}$ and $y_2 := y^{h_2}$, so that we have 
	\begin{align*}
	y &= (bab,baba,ab^2,ab^2a), \\
	y_1  &= (abab,ababa,b^2,b^2a)\hbox{, and} \\
	y_2  &=  (b^2ab^2ab,ab^2abababa,b^2abab^2,ab^2).
	\end{align*}
	We will show that $T^4 = \langle y,y_1,y_2\rangle \leq \langle V,y\rangle$. 
	
	First, it is straightforward to check that the group $\langle y,y_1,y_2\rangle$ projects onto each direct factor of $T^4$. Consider now the elements of $T$  appearing as coordinates of $y, y_1$ and $y_2$. It is easy to see that the three elements $ab^2a,$ $ b^2,$ and $b^2ab^2ab$ have order 3. On the other hand, using the fact that $ab$ and $ab^2$ have order $p$, we can check that $abab, bab$ and $b^2abab^2$ also have order $p$. The remaining elements appearing as coordinates of $y, y_1$ and $y_2$ are conjugates of these elements of order $p$ and hence also have the same order. In particular, since the only elements of order 3 ($ab^2a, b^2$, and $b^2ab^2ab$), appear in the fourth, third and first coordinates of $y, y_1$ and $y_2$ respectively, and $\langle y,y_1,y_2\rangle$ is a subdirect subgroup of $T^4$, it follows that $T^4 = \langle y,y_1,y_2\rangle$ and so $\langle V, y\rangle = H$. So by Lemma \ref{cond2}, $(\Gamma, G) \in \OG(4)$.
	
	Finally we show that $(\Gamma, G)$ is a biquasiprimitive basic pair.
	Since $H$ acts transitively on the simple direct factors of $T^4$, it follows that $T^4$ is a minimal normal subgroup of $H$, and is the unique such subgroup. Hence $G^+$ has a unique minimal normal subgroup $N = \Diag_\varphi(T^4 \times T^4) \cong T^4$ and this must be the unique minimal normal subgroup of $G$. Hence $(\Gamma, G)$ is biquasiprimitive by Corollary \ref{CorMin}.
	%Now $G^+$ is isomorphic to $H$ and therefore also has a unique minimal normal subgroup $N = \Diag_\varphi(T^4 \times T^4) \cong T^4$, this normal subgroup is also normalized by $g \in G$ and hence is normal in $G$. Moreover, $N$ has two orbits on $V\Gamma = (G:S)$, namely $\{Sn:n\in N\}$ and $\{Sgn:n\in N\}$.
\end{proof}

We conclude this section by giving constructions of basic biquasiprimitive ($\Gamma, G) \in \OG(4)$ as described in Theorem \ref{MainResult} case (c). The first construction is similar to Construction \ref{Nonabelian k=2}. As in that construction, the alternating group Alt($n$) with $n$ odd, and generators $a = (123)$ and $b = (1...n)$ will have the required properties.

\begin{Construction}\label{Nonabelian ell=1} 
	Let $T$ be a nonabelian simple group, and let $\{a,b\}$ be a generating set for $T$ such that no automorphism of $T$ swaps $a$ and $b$, and the elements $a$ and $b$ have odd order. Suppose further that there is an automorphism $\theta \in \Aut(T)$ which inverts both generators $a$ and $b$. Let $N = T\times T$, $S_0 = \{(a,b),(b,a)\}$, $S= S_0\cup S_0^{-1}$, and let $\Gamma = \BiCay(N, \emptyset, \emptyset, S)$. Define two permutations $\delta$ and $\sigma$ of $V\Gamma$,  where $(x,y)^\delta_\varepsilon = (y,x)_{1-\varepsilon}$, and $(x,y)^\sigma_\varepsilon = (x^{\theta},y^{\theta})_{\varepsilon}$ for $\varepsilon \in \{0,1\}$. Set $G := N \rtimes\langle \sigma, \delta\rangle$.	
\end{Construction} 
\begin{Lemma}
	For $\Gamma ,G$ as in Construction \ref{Nonabelian ell=1}, $(\Gamma, G) \in \OG(4)$ and is basic of biquasiprimitive type with $\soc(G)$ as described in Theorem \ref{MainResult} case (c) with $\ell = 1$. 
\end{Lemma}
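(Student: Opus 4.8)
The plan is to follow the proof of Lemma~\ref{k=2lemma} almost verbatim for the assertion $(\Gamma,G)\in\OG(4)$, since Construction~\ref{Nonabelian ell=1} differs from Construction~\ref{Nonabelian k=2} only in that $\sigma$ now acts coordinatewise by $\theta$ rather than by interchanging the two copies of $T$ in $N=T\times T$. First I would check that $\Gamma$ is $4$-valent and connected: the hypotheses that $a,b$ have odd order and that no automorphism of $T$ swaps $a$ and $b$ give $S_0\cap S_0^{-1}=\emptyset$ (so $|S|=4$) and rule out $\langle S\rangle$ being a diagonal subgroup of $N$, forcing $\langle S\rangle=N$; and since odd-order elements lie in the subgroup generated by their squares, both $(a,b)$ and $(b,a)$ lie in $\langle S^2\rangle$, whence $\langle S^2\rangle=N$. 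Next, Proposition~\ref{AutBiCay} shows $\sigma,\delta\in\Aut(\Gamma)$: here $\sigma$ is induced by the automorphism $\theta\times\theta$ of $N$, which fixes $S$ setwise precisely because $\theta$ inverts $a$ and $b$ (so $(a,b)^{\theta\times\theta}=(a^{-1},b^{-1})\in S$), while $\delta$ is induced by the coordinate swap, which interchanges $(a,b)$ and $(b,a)$.

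A direct computation then gives $\sigma^2=\delta^2=1$ and $\sigma\delta=\delta\sigma$, so $\langle\sigma,\delta\rangle\cong C_2^2$, the bipart-preserving subgroup is $G^+=N\rtimes\langle\sigma\rangle$ of index $2$, and the stabiliser of $\alpha=(1_N)_0$ is $G_\alpha=\langle\sigma\rangle\cong C_2$ (a $2$-group), with the two length-$2$ orbits $\{(a,b)_1,(a^{-1},b^{-1})_1\}$ and $\{(b,a)_1,(b^{-1},a^{-1})_1\}$ on $\Gamma(\alpha)$. To apply the $\OG(4)$-criterion from the preliminaries it then remains to verify that no element of $G$ reverses an edge; I would do this exactly as in Lemma~\ref{k=2lemma}, by writing down the two elements of $G$ carrying $(1_N)_0$ to the neighbour $(a,b)_1$ (of the forms $n_1\delta$ and $n_2\sigma\delta$) and checking that neither sends $(a,b)_1$ back to $(1_N)_0$.

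The substantive content is the socle analysis, which is where this construction realises Theorem~\ref{MainResult}(c) rather than case~(b). Because $\sigma$ acts on $N$ as $\theta\times\theta$, it preserves each factor $K_1:=T\times 1$ and $K_2:=1\times T$; hence both are normal in $G^+=N\rtimes\langle\sigma\rangle$ and, being copies of the nonabelian simple group $T$, they are minimal normal subgroups of $G^+$ (so $\ell=1$). On the other hand $\delta\in G\setminus G^+$ interchanges the two coordinates and therefore swaps $K_1$ and $K_2$, so neither is normal in $G$; the unique minimal normal subgroup of $G$ is then $N=K_1\times K_2\cong T^2$, which has exactly the two biparts as its orbits. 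This would give that $G$ is biquasiprimitive but not quasiprimitive and exhibits the structure of Theorem~\ref{MainResult}(c) with $\ell=1$ and $k=2$.

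I expect the main obstacle to be establishing that $N$ really is the \emph{unique} minimal normal subgroup of $G$ (equivalently, that $G$ is genuinely biquasiprimitive). The claim that $K_1,K_2$ are the \emph{only} minimal normal subgroups of $G^+$ requires $C_{G^+}(N)=1$, and this fails exactly when $\theta\times\theta$ is an inner automorphism of $N$, i.e.\ when $\theta$ is inner in $T$: in that case $\sigma$ agrees with conjugation by some $(c,c)\in N$, the element $(c,c)\sigma$ centralises $N$ and is fixed by both $\sigma$ and $\delta$, producing a central involution whose cyclic group is a second, highly intransitive, minimal normal subgroup. Thus the proof must use that $\theta$ is an \emph{outer} automorphism of $T$. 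For the intended examples $T=\mathrm{Alt}(n)$ I would verify this by checking that the (unique, since $\langle a,b\rangle=T$) permutation inverting both $a=(123)$ and $b=(12\cdots n)$ is an odd permutation; a parity count shows it consists of $(n-1)/2$ transpositions, so it is odd precisely when $n\equiv 3\pmod 4$, which is the case one should impose on the generators in the remark.
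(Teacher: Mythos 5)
Your argument follows the paper's proof almost step for step: the paper likewise imports $4$-valency and connectivity from Lemma~\ref{k=2lemma} (the graph is unchanged from Construction~\ref{Nonabelian k=2}), invokes Proposition~\ref{AutBiCay} to get $\sigma,\delta\in\Aut(\Gamma)$, computes that the stabiliser of $(1_N)_0$ is $\langle\sigma\rangle\cong C_2$, rules out edge reversal by listing the two elements of $G$ carrying $(1_N)_0$ to $(a,b)_1$, and then observes that $T\times 1$ is a normal intransitive subgroup of $G^+$ while $\delta$ swaps the two simple direct factors of $N$, concluding that $N$ is the unique minimal normal subgroup of $G$. All of your computations along this route (the two $G_\alpha$-orbits $\{(a,b)_1,(a^{-1},b^{-1})_1\}$ and $\{(b,a)_1,(b^{-1},a^{-1})_1\}$, and $G^+=N\rtimes\langle\sigma\rangle$) are correct.

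Where you differ --- and where your version is genuinely stronger --- is the uniqueness step. The paper's sentence ``$\delta$ interchanges the two simple direct factors of $N$, and hence $N$ is the unique minimal normal subgroup of $G$'' only shows that $N$ is \emph{a} minimal normal subgroup and that neither factor is normal in $G$; it does not exclude a second minimal normal subgroup meeting $N$ trivially, which would necessarily lie in $C_G(N)$. Your observation that $C_G(N)\neq 1$ exactly when $\theta$ is inner is correct: if $\theta=\iota_c$, then $\theta^2=1$ forces $c^2=1$, and the permutation $(c,c)^{-1}\sigma$ acts as left multiplication by $c$ in both coordinates, is centralised by $N$, $\sigma$ and $\delta$, and so generates a central $C_2$ with $|T|^2$ orbits on $V\Gamma$ --- hence $G$ is not biquasiprimitive. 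Your parity computation for $T=\mathrm{Alt}(n)$ is also right: the unique element of $\Sym(n)$ inverting both $(123)$ and $(12\cdots n)$ is $i\mapsto 4-i \pmod n$, a product of $(n-1)/2$ transpositions, which is even (so $\theta$ is inner and the construction fails) precisely when $n\equiv 1\pmod 4$; already $n=5$ gives a counterexample to the lemma with the paper's suggested generators. So the hypothesis that $\theta$ is an \emph{outer} automorphism must be added to Construction~\ref{Nonabelian ell=1}, and $n\equiv 3\pmod 4$ to the accompanying $\mathrm{Alt}(n)$ example; with that fix your proof is complete, and the remainder coincides with the paper's.
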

\begin{proof}
	Since $\Gamma$ is the same graph from Construction \ref{Nonabelian k=2}, it follows from Lemma \ref{k=2lemma} that $\Gamma$ is 4-valent and connected. Again $\sigma$ and $\delta$ are induced by automorphisms of $N$ which fix $S$ and hence are automorphisms of $\Gamma$ by Proposition \ref{AutBiCay}. Moreover it is a straightforward check that the stabilizer of the vertex $(1_N)_0$ is $\langle\sigma\rangle \cong C_2$ and also that there are only two automorphisms in $G$ mapping the vertex $(1_N)_0$ to its neighbour $(a,b)_1$ but neither of these reverses the edge $\{(1_N)_0,(a,b)_1\}$. Hence $\Gamma$ is $G$-oriented and  $(\Gamma, G) \in \OG(4)$.
	
	Now notice the setwise stabilizer of $\Delta : = N_0$ is $G^+ = N\langle \sigma \rangle$ and  that $T\times 1 \leq N$ is a normal subgroup of $G^+$ which is intransitive on  $\Delta$. In particular, $G^+$ is not quasiprimitve on $\Delta$. Moreover $\delta$  interchanges the two simple direct factors of $N$, and hence $N$ is the unique minimal normal subgroup of $G$. Since $N$ is contained in $G^+$ it follows that $\Gamma$ is basic of biquasiprimitive type as in Theorem \ref{nonabelianBound} case (b).
\end{proof}	

The next two constructions both provide pairs $(\Gamma, G)$ as described in Theorem \ref{MainResult} case (c) with $\ell = 2$ and $\ell = 4$ respectively. In both cases $\soc(G) = T^{2\ell}$ where $T$ is the simple group $\PSL(2,p)$. In both cases we may use the same generating pairs $\{a,b\}$ as those used in Construction \ref{Nonabelian k=4} (see Remark \ref{PSLgen}).

\begin{Construction}\label{Nonabelian ell=2} For a prime $p \geq 7$ let $T$ denote the simple group $\PSL(2,p)$ generated by two elements $a$ and $b$ such that $a$ and $b$ have orders $2$ and $3$ respectively while  $ab$ and $ab^2$ have order $p$.
	Take the group $T \wr S_4$ with $S_4$ acting by permuting the four direct factors of $T^4$ and define the following elements of this group  	$$\tilde{\varphi} := (b^2ab, ab^2,b^2, a)(13)(24),$$
	$$y := \tilde{\varphi}^2 = (b^2a, ab^2a, bab, b^2),$$
	$$h_1 := (a,a,a,a)(12)(34).$$
	
	Now let $V := \langle h_1 \rangle $ and define the subgroup $H := T^4 \rtimes V \leq T \wr S_4$. Notice that conjugation by $\tilde{\varphi}$ in  $T \wr S_4$ induces an automorphism $\varphi \in \Aut(H)$, in particular $\varphi^2$ is the inner automorphism of $H$ corresponding to conjugation by $y \in H$.
	
%	Finally let $G^+ := \Diag_{\varphi}(H\times H)$, $S := \Diag_{\varphi}(V \times V)$, $g := (y,1)(12)\in  H \wr S_2$, and construct the graph-group pair $(\Gamma, G)$ where $G := \langle G^+, g \rangle \leq H \wr S_2$ and $\Gamma := \Cos(G, S, g)$ (as in Construction \ref{GeneralCoset}).
		Finally apply Construction \ref{GeneralCoset} using $H, V, y $ and $ \varphi$ to get the pair $(\Gamma, G)$.
\end{Construction} 

\begin{Lemma}
	Let $\Gamma ,G$ be as in Construction \ref{Nonabelian ell=2}. Then $(\Gamma, G) \in \OG(4)$ and is basic of biquasiprimitive type with $\soc(G)$ as described in Theorem \ref{MainResult} case (c) with $\ell=2$.
\end{Lemma}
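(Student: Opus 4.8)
The plan is to follow Method~\ref{CosetMethod}, since Construction~\ref{Nonabelian ell=2} is a special case of Construction~\ref{GeneralCoset}. Thus I would first verify condition~(2) of Lemma~\ref{cond2} to obtain $(\Gamma,G)\in\OG(4)$, and then establish biquasiprimitivity via Corollary~\ref{CorMin}. The guiding structural observation is that the two permutations of the four simple factors behave differently: $V=\langle h_1\rangle$ induces $(12)(34)$, which preserves the partition $\{\{1,2\},\{3,4\}\}$, whereas $\varphi$ (conjugation by $\tilde{\varphi}$) induces $(13)(24)$, which interchanges the two blocks. Consequently $M=\soc(H)=T^4$ splits as $R_1\times R_2$ with the $R_i\cong T^2$ the two minimal normal subgroups of $H$ (the products over the two $V$-orbits), and $R_1^\varphi=R_2$. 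This is precisely the non-quasiprimitive situation of Proposition~\ref{twotypes}(b), corresponding to case~(c) of Theorem~\ref{MainResult} with $\ell=2$ and $k=2\ell=4$.

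For condition~(2) of Lemma~\ref{cond2}, note first that $h_1$ is an involution, so $V\cong C_2$. Since $T$ is centreless and $h_1$ permutes the factors of $T^4$ nontrivially, $C_H(T^4)=1$; as any normal subgroup of $H$ contained in $V$ would centralise $T^4$, this forces $V$ to be core-free. A short computation shows that $h_1^{\tilde{\varphi}}$ has the same top component $(12)(34)$ as $h_1$ but a different $T^4$-component, so $h_1^\varphi\neq h_1$, giving $V\cap V^\varphi=1$ and hence $|V:V\cap V^\varphi|=2$; the same comparison of $T^4$-components in a single coordinate yields $y\notin VV^\varphi$ (the only elements of $VV^\varphi$ with trivial top component are $1$ and $h_1 h_1^\varphi$). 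The remaining condition $\langle V,y\rangle=H$ reduces, since $h_1\in\langle V,y\rangle$ and $y^{h_1}\in\langle V,y\rangle$, to proving $\langle y,y^{h_1}\rangle=T^4$.

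This last claim is where the work concentrates, and I expect it to be the main obstacle. I would compute $y^{h_1}$ explicitly and record the orders of the coordinate entries of $y$ and $y^{h_1}$, each being either $p$ (conjugates of $ab$ or $ab^2$) or $3$ (conjugates of $b^2$), using Remark~\ref{PSLgen}. After checking that $\langle y,y^{h_1}\rangle$ projects onto each of the four simple factors, it is a subdirect subgroup of $T^4$, so it suffices to show that no two coordinates are diagonally linked by an automorphism of $T$. Four of the six coordinate pairs are ruled out at once, because some generator has coordinates of different orders ($p$ versus $3$) in those two positions, which no order-preserving automorphism can reconcile. The genuinely delicate pairs are $\{1,3\}$ and $\{2,4\}$, where the relevant coordinates of both $y$ and $y^{h_1}$ happen to have matching orders. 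Here I would argue directly: a linking automorphism $\alpha$ for the pair $\{1,3\}$ would have to satisfy both $(b^2a)^\alpha=bab$ and $(b^2)^\alpha=ab^2a$, and combining these forces $a^\alpha=(ab^2a)^{-1}(bab)=(ab)^3$, which has order $p\neq 2$, contradicting that $\alpha$ fixes the order of the involution $a$; the pair $\{2,4\}$ is dispatched analogously, with $a$ forced onto $(ab^2)^3$. Hence all pairs are unlinked, $\langle y,y^{h_1}\rangle=T^4$, and so $(\Gamma,G)\in\OG(4)$.

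Finally, for biquasiprimitivity I would invoke Corollary~\ref{CorMin}: since $H=MV$ with $M=\soc(H)=T^4$, it remains only to check that $N:=\soc(G^+)=\Diag_\varphi(M\times M)$ is a minimal normal subgroup of $G$. It is normal in $G$, being characteristic in the normal subgroup $G^+$. For minimality, write $N=K\times L$ with $K=\Diag_\varphi(R_1\times R_1)$ and $L=\Diag_\varphi(R_2\times R_2)$; since $T$ is nonabelian these are the only two minimal normal subgroups of $G^+$, and $g$ interchanges them because $\varphi$ swaps $R_1$ and $R_2$. Any nontrivial $G$-normal subgroup inside $N$ therefore contains one of $K,L$, hence both by $g$-invariance, hence equals $N$. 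Thus $N$ is minimal normal in $G$, Corollary~\ref{CorMin} applies, and $\soc(G)=N\cong T^4$ has exactly the structure of Theorem~\ref{MainResult}(c) with $\ell=2$.
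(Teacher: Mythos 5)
Your proposal is correct and follows essentially the same route as the paper: verify condition (2) of Lemma \ref{cond2} (with the subdirect-product argument reducing to the two coordinate pairs $\{1,3\}$ and $\{2,4\}$, ruled out by forcing $a$ onto $(ab)^3$ resp.\ $(ab^2)^3$), then obtain biquasiprimitivity from Corollary \ref{CorMin} after showing $g$ swaps the two minimal normal subgroups $K$ and $L$ of $G^+$. The only cosmetic difference is your centraliser argument for core-freeness of $V$, which is a fine (and slightly more explicit) justification of the same fact.
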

\begin{proof}
	Again, since this is a  special case of Construction \ref{GeneralCoset}, in order to show that $(\Gamma, G) \in \OG(4)$ it is sufficient to show that condition (2) of Lemma \ref{cond2} is satisfied.
	
	Here $V \cong C_2$ and since $h_1^\varphi \notin V$ we have that $V$ is core-free in $H$ and $|V:V\cap V^\varphi| = 2$. It is also easy to check that  $y \notin VV^\varphi$ in this case, by noticing that  $y \neq h_1h_1^\varphi$.
	
	Left to show is that $\langle V, y \rangle = H$. In fact, we will show that $T^4\leq \langle y_1, y\rangle$ where $y_1 := y^{h_1} = (b^2, ab^2, ab^2a, ababa)$, from which it follows that $\langle V, y \rangle = H$. First, $\langle y, y_1 \rangle $ projects onto each factor of $T^4$ so we only need to make sure that $\langle y, y_1 \rangle $ is not a product of diagonal subgroups of $T^4$. Now $y$ has elements of order $p$ in its first and third coordinates and elements of order 3 in its second and fourth coordinates. Thus all we need to check is that no automorphism of $T$ can map $b^2a$ to $bab$ and $b^2$ to $ab^2a$ and that no automorphism can map $ab^2a$ to $b^2$ and $ab^2$ to $ababa$. In the first case, such an automorphism must map $a$ to $(ab)^3$ which is impossible since $a$ is an involution. In the second case, such an automorphism must map $a$ to $(ab^2)^3$, which again is not an involution. Hence $T^4\leq \langle y_1, y\rangle$ and so $\langle V, y \rangle = H$. By Lemma \ref{cond2} this gives $(\Gamma, G) \in \OG(4)$.
	
	It is clear that the action of $H$ on the simple direct factors of $T^4$ has two orbits of length 2. Thus $H$ has two minimal normal subgroups isomorphic to $T^2$, and these are the only minimal normal subgroups of $H$. Furthermore, it is clear that the automorphism $\varphi$ of $H$ interchanges these normal subgroups. We will thus let $R$ and $R^\varphi$ denote these two minimal normal subgroups of $H$.
	
	Since $G^+ \cong H$, $G^+$ also has two minimal normal subgroups isomorphic to $R, R^\varphi \cong T^2$. Let $K$ and $L$ denote these minimal normal subgroups of $G^+$ so $K = \Diag_\varphi(R\times R)$ and $L = \Diag_\varphi(R^\varphi \times R^\varphi)$. Then conjugation by $g$ in $G$, interchanges $K$ and $L$ and so $G$ acts transitively on the direct factors of $\soc(G^+) = K\times L \cong T^4$. Hence $\soc(G^+)$ is a minimal normal subgroup of $G$ and $(\Gamma, G)$ is biquasiprimitive by Corollary \ref{CorMin}.
\end{proof}	

\begin{Construction}\label{Nonabelian ell=4} For a prime $p \geq 7$ let $T$ denote the simple group $\PSL(2,p)$ generated by two elements $a$ and $b$ such that $a$ and $b$ have orders $2$ and $3$ respectively while  $ab$ and $ab^2$ have order $p$.
	Take the group $T \wr S_8$ with $S_8$ acting by permuting the eight direct factors of $T^8$ and define the following elements of this group  $$\tilde{\varphi} := (b,ba,ab,aba,b^2,ab,ba,ab^2a)(15)(28)(37)(46),$$
	$$y := \tilde{\varphi}^2 = (1,a,ab^2a,ab^2,1,ababa,b^2,ab^2aba),$$
	$$h_1 := (a,a,a,a,a,a,a,a)(12)(34)(56)(78),$$
	$$h_2 := h_1^{\tilde{\varphi}} = (b^2,ab^2a,aba,b,b^2aba,ab^2ab,b^2aba,ab^2ab)(14)(23)(58)(67).$$
	
	Now let $V := \langle h_1 ,h_2\rangle $ and define the subgroup $H := T^8 \rtimes V \leq T \wr S_8$. Notice that conjugation by $\tilde{\varphi}$ in  $T \wr S_8$ induces an automorphism $\varphi \in \Aut(H)$, in particular $\varphi^2$ is the inner automorphism of $H$ corresponding to conjugation by $y \in H$.
	
	%Finally let $G^+ := \Diag_{\varphi}(H\times H)$, $S := \Diag_{\varphi}(V \times V)$, $g := (y,1)(12)\in  H \wr S_2$, and construct the graph-group pair $(\Gamma, G)$ where $G := \langle G^+, g \rangle \leq H \wr S_2$ and $\Gamma := \Cos(G, S, g)$ (as in Construction \ref{GeneralCoset}).	
	Finally apply Construction \ref{GeneralCoset} using $H, V, y $ and $ \varphi$ to get the pair $(\Gamma, G)$.
\end{Construction} 
\begin{Lemma}\label{lastConstructionLemma}
	Let $\Gamma ,G$ be as in Construction \ref{Nonabelian ell=4}. Then $(\Gamma, G) \in \OG(4)$ and is basic of biquasiprimitive type with $\soc(G)$ as described in Theorem \ref{MainResult} case (c) with $\ell=4$.
\end{Lemma}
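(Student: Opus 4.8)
Since $(\Gamma,G)$ arises from Construction~\ref{GeneralCoset}, the plan is to follow the three steps of Method~\ref{CosetMethod}, exactly as in the proofs for Constructions~\ref{Nonabelian k=4} and~\ref{Nonabelian ell=2}. First I would confirm that $(H,V,y,\varphi)$ is appropriate. Checking that $h_1$ and $h_2$ are commuting involutions gives $V=\langle h_1,h_2\rangle\cong C_2^2$ with $V\cap T^8=1$, so $H=T^8\rtimes V$ genuinely is a semidirect product; moreover $\varphi$ is the restriction to $H$ of conjugation by $\tilde\varphi$, which normalises both $T^8$ and $V$ (note $h_1^{\tilde\varphi}=h_2\in V$ and $h_2^{\tilde\varphi}=h_1^{y}\in H$), so $\varphi\in\Aut(H)$ and $\varphi^2=\iota_{\tilde\varphi^2}=\iota_y$ as required.

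Next I would verify condition~(2) of Lemma~\ref{cond2} to obtain $(\Gamma,G)\in\OG(4)$. The subgroup $V$ is core-free in $H$ because every nontrivial element of $V$ has nontrivial image in $\Sym(8)$, hence moves a simple direct factor of $T^8$ and so does not centralise $T^8$; since the core of $V$ is normal in $H$ and contained in $V$, it must centralise $T^8$ and is therefore trivial. Using $h_1^\varphi=h_2$ and $h_2^\varphi=h_1^{y}$ gives $V^\varphi=\langle h_2,h_1^{y}\rangle$, so $h_2\in V\cap V^\varphi$ and $|V:V\cap V^\varphi|=2$ once I check $h_1\notin V^\varphi$ (equivalently $h_1\neq h_1^{y}$, a single-coordinate computation). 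For $y\notin VV^\varphi$ I would use that $y\in T^8$ forces any factorisation $y=vu$ ($v\in V,\,u\in V^\varphi$) to have $v,u$ with equal image in $\Sym(8)$; running through the four matching pairs leaves only the candidate $y=h_1h_1^{y}$, which I would exclude by comparing one coordinate of $h_1h_1^{y}$ with the corresponding coordinate of $y$.

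The substantial step is $\langle V,y\rangle=H$, and as before it suffices to prove $T^8\leq\langle V,y\rangle$. I would form the four $V$-conjugates $y,\,y^{h_1},\,y^{h_2},\,y^{h_1h_2}$, which all lie in the $V$-invariant subgroup $\langle V,y\rangle\cap T^8$, and show that $D:=\langle y,y^{h_1},y^{h_2},y^{h_1h_2}\rangle=T^8$. This proceeds in two stages: first that $D$ projects onto each of the eight coordinates (so $D$ is subdirect in $T^8$), which follows because in each coordinate the four conjugates contribute generators of $T$; and second that no two coordinates are fused by a proper diagonal $\{(t,t^\psi)\}$, so that $D$ is the full direct product. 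The explicit shape of $y$ is tailored for the second stage: each conjugate is trivial in exactly two coordinates, and these trivial positions partition the eight coordinates. Since a proper diagonal contains no element of the form $(1,s)$ with $s\neq1$, the existence for almost every pair of coordinates of a conjugate trivial in one and nontrivial in the other rules out fusion for all but the few cross-orbit pairs whose trivial positions coincide; those remaining pairs I would separate using the orders of the relevant entries (order $2$ or $3$ versus order $p$, recalling that $ab$ and $ab^2$ have order $p$), as no automorphism of $T$ maps an element of one order to an element of another. This bookkeeping over the eight coordinates and four conjugates is the main obstacle and the only genuinely laborious part of the argument.

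Finally I would establish biquasiprimitivity and the claimed socle via Corollary~\ref{CorMin}. The image of $V$ in $\Sym(8)$ is $\langle(12)(34)(56)(78),(14)(23)(58)(67)\rangle\cong C_2^2$, which has exactly two orbits $\{1,2,3,4\}$ and $\{5,6,7,8\}$; hence $\soc(H)=T^8$ is not minimal normal in $H$, and instead $H$ has two minimal normal subgroups $R,R^\varphi\cong T^4$ corresponding to these orbits. The $\Sym(8)$-part $(15)(28)(37)(46)$ of $\tilde\varphi$ interchanges the orbits, so $\varphi$ swaps $R$ and $R^\varphi$. Transferring through $G^+\cong H$ exactly as in Construction~\ref{Nonabelian ell=2}, $G^+$ has the two minimal normal subgroups $K=\Diag_\varphi(R\times R)$ and $L=\Diag_\varphi(R^\varphi\times R^\varphi)$, each isomorphic to $T^4$, which are interchanged by $g$; therefore $\soc(G^+)=K\times L\cong T^8$ is a minimal normal subgroup of $G$. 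As $H=MV$ with $M=\soc(H)$, Corollary~\ref{CorMin} gives that $(\Gamma,G)$ is biquasiprimitive basic, and this places it in case~(c) of Theorem~\ref{MainResult} with $\ell=4$ (so $k=2\ell=8$), as required.
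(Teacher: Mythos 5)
Your proposal is correct and follows essentially the same route as the paper's proof: verify condition (2) of Lemma \ref{cond2} (reducing $y\notin VV^\varphi$ to the single candidate $h_1h_1^{y}$), prove $T^8\leq\langle V,y\rangle$ by showing the $V$-conjugates of $y$ generate a subdirect subgroup whose trivial coordinates and element orders rule out diagonal fusion, and then apply Corollary \ref{CorMin} after identifying the two $H$-orbits on the simple factors and the swapping of $K$ and $L$ by $g$. The only differences are cosmetic: the paper uses $V\cap V^{y}=1$ for core-freeness and only the three conjugates $y, y^{h_1}, y^{h_2}$ (handling the pair $(3,7)$ by elimination), and it disposes of $y=h_1h_1^{y}$ by noting this would force $y=h_1^2=1$.
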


\begin{proof}
	As in previous constructions, we only need to check that condition (2) of Lemma \ref{cond2} is satisfied to show that $(\Gamma, G) \in \OG(4)$.
	Here we have $V = \langle h_1, h_2 \rangle \cong \mathbb{Z}_2^2$ with $V\cap V^\varphi = \langle h_2\rangle$ and $V\cap V^y = 1$. Hence $V$ is core-free in $H$ and $|V: V\cap V^\varphi| = 2$. To check that $y \notin VV^\varphi$ it is sufficient to check that $y \neq h_1h_1^y$ and this is clearly true since $y\neq 1$.  
	
	Left to show is that $\langle V, y \rangle = H$. Of course, it is sufficient to show that $T^8 \leq \langle V, y \rangle$, and in fact we will show that $T^8 = \langle y, y_1,y_2\rangle \leq \langle V, y\rangle$ where $y_1:= y^{h_1}$, and $y_2 := y^{h_2}$. Hence we have
	
	$$ y = (1,a,ab^2a,ab^2,1,ababa,b^2,ab^2aba), $$
	$$y_1 = (a,1,b^2a,b^2,bab,1,b^2ab,ab^2a),\hbox{ and }$$ 
	$$y_2 = (b^2a, ab^2a,abab^2a,1,b^2ab,ab^2ab^2aba,b^2a,1). $$
	
	It is easy to check that $\langle y,y_1,y_2\rangle$ projects onto each direct factor of $T^8$. Furthermore, notice that the identity element occurs in the first and fifth coordinates of $y$, the second and sixth coordinates of $y_1$, and the fourth and eighth coordinates of $y_2$. So if $\langle y,y_1,y_2\rangle$ is a product of diagonal subgroups of $T^8 = \Pi _{i =1} ^8 T_i$  then each direct factor of  $\langle y,y_1,y_2\rangle$ must be either a full subgroup $T_j$ for some  $1\leq j\leq 8$, or is a diagonal subgroup of a subproduct $T_m \times T_n$ where $(m,n) \in \{(1,5),(2,6),(3,7),(4,8)\}$.
	
	However, the elements in the first and fifth coordinates of $y_2$ have orders $p$ and 2 respectively, the elements in the second and sixth coordinates of $y$ have orders 2 and $p$ respectively, the elements in the third and seventh coordinates of $y_1$ have orders $p$ and 2 respectively, and the elements in the fourth and eighth coordinates of $y$ have orders $p$ and 2 respectively. 
	Therefore  $\langle y, y_1, y_2\rangle = \Pi _{i =1} ^8 T_i = T^8$ and hence $H = \langle V, y\rangle$.  Lemma \ref{cond2} now implies that $(\Gamma, G) \in \OG(4)$.
	
	Notice that the action of $H$ on the simple direct factors of $T^8$ has two orbits of length 4. Thus $H$ has two minimal normal subgroups isomorphic to $T^4$, and these subgroups are interchanged by the automorphism $\varphi \in \Aut(H)$. As in previous constructions, we let $R$ and $R^\varphi$ denote these two minimal normal subgroups of $H$.
	
	Since $G^+ \cong H$, $G^+$ also has two minimal normal subgroups, namely $K = \Diag_\varphi(R\times R)$ and $L = \Diag_\varphi(R^\varphi \times R^\varphi)$, and conjugation by $g \in G$ interchanges $K$ and $L$, implying that $G$ acts transitively on the direct factors of $K\times L \cong T^8$. In particular, $\soc(G^+) = K \times L$ is a minimal normal subgroup of $G$, and hence $(\Gamma, G)$ is biquasiprimitive by Corollary \ref{CorMin}. This shows that $(\Gamma, G)$ is a basic biquasiprimitive pair as described in Theorem \ref{nonabelianBound} case(b) with $\ell=4$.
\end{proof}

Constructions \ref{Abelian k=1} - \ref{Nonabelian ell=4} together with Lemmas \ref{firstConstructionLemma} - \ref{lastConstructionLemma}, and the remarks in this section which give explicit simple groups and generating pairs for each construction, complete the proof of Theorem \ref{MainResult}. 
Note that in each of the explicit examples of biquasiprimitive pairs ($\Gamma, G)$ provided here, the group $G$ contains a subgroup $N$ acting semi-regularly with two orbits on $V\Gamma$, hence all of these examples are bi-Cayley graphs. Of course, it should not be too difficult to construct non-bi-Cayley examples using Method \ref{CosetMethod}.

An interesting further question would be to determine which nonabelian simple groups $T$ can occur as the simple direct factors of the socle of $G$ where $(\Gamma, G)\in \OG(4)$ is biquasiprimitive.

\subsection*{Acknowledgements}
We are grateful for the opportunity to participate in the Tutte Memorial MATRIX retreat where this work began. The first author was supported by an Australian Government Research Training Program (RTP) Scholarship, and the second author acknowledges Australian Research Council Funding of DP160102323.
We acknowledge the use of \textsc{Magma} \cite{MR1484478} for testing theories and constructing examples.
	\bibliographystyle{abbrv}
	\bibliography{Biquasiprimitive}

\begin{thebibliography}{10}

\bibitem{al2016normal}
J.~A. Al-bar, A.~N. Al-kenani, N.~M. Muthana, and C.~E. Praeger.
\newblock A normal quotient analysis for some families of oriented four-valent
  graphs.
\newblock {\em Ars Math Contemp 12 (2017), 361-381}, 2016.

\bibitem{al2017finite}
J.~A. Al-Bar, A.~N. Al-Kenani, N.~M. Muthana, and C.~E. Praeger.
\newblock Finite edge-transitive oriented graphs of valency four with cyclic
  normal quotients.
\newblock {\em Journal of Algebraic Combinatorics}, 46(1):109--133, 2017.

\bibitem{al2015finite}
J.~A. Al-bar, A.~N. Al-kenani, N.~M. Muthana, C.~E. Praeger, and P.~Spiga.
\newblock Finite edge-transitive oriented graphs of valency four: a global
  approach.
\newblock {\em Electr. J. Combin. 23 (2016), P1.10}, 2015.

\bibitem{MR1484478}
W.~Bosma, J.~Cannon, and C.~Playoust.
\newblock The {M}agma algebra system. {I}. {T}he user language.
\newblock {\em J. Symbolic Comput.}, 24(3-4):235--265, 1997.
\newblock Computational algebra and number theory (London, 1993).

\bibitem{conder2016edge}
M.~Conder, J.-X. Zhou, Y.-Q. Feng, and M.-M. Zhang.
\newblock Edge-transitive bi-cayley graphs.
\newblock {\em arXiv preprint arXiv:1606.04625}, 2016.

\bibitem{coxeter2013generators}
H.~S. Coxeter and W.~O. Moser.
\newblock {\em Generators and relations for discrete groups}.
\newblock Springer Science \& Business Media, 1957.

\bibitem{godsil200AGT}
C.~Godsil and G.~Royle.
\newblock {\em Algebraic graph theory}.
\newblock Springer, 2001.

\bibitem{king2017generation}
C.~S. King.
\newblock Generation of finite simple groups by an involution and an element of
  prime order.
\newblock {\em Journal of Algebra}, 478:153--173, 2017.

\bibitem{maruvsivc1998half}
D.~Maru{\v{s}}i{\v{c}}.
\newblock Half-transitive group actions on finite graphs of valency 4.
\newblock {\em Journal of Combinatorial Theory, Series B}, 73(1):41--76, 1998.

\bibitem{maruvsivc1998recent}
D.~Maru{\v{s}}i{\v{c}}.
\newblock Recent developments in half-transitive graphs.
\newblock {\em Discrete mathematics}, 182(1-3):219--231, 1998.

\bibitem{maruvsivc1999tetravalent}
D.~Maru{\v{s}}i{\v{c}} and C.~E. Praeger.
\newblock Tetravalent graphs admitting half-transitive group actions:
  alternating cycles.
\newblock {\em Journal of Combinatorial Theory, Series B}, 75(2):188--205,
  1999.

\bibitem{morris2009strongly}
J.~Morris, C.~E. Praeger, and P.~Spiga.
\newblock Strongly regular edge-transitive graphs.
\newblock {\em Ars Mathematica Contemporanea}, 2(2):137--155, 2009.

\bibitem{PPMatrix}
N.~Poznanovi\'{c} and C.~E. Praeger.
\newblock Biquasiprimitive oriented graphs of valency four.
\newblock In {\em 2017 MATRIX Annals}. Springer, to appear.

\bibitem{praeger1993nan}
C.~E. Praeger.
\newblock An {O}'{N}an-{S}cott theorem for finite quasiprimitive permutation
  groups and an application to 2-arc transitive graphs.
\newblock {\em Journal of the London Mathematical Society}, 2(2):227--239,
  1993.

\bibitem{praeger1999finite}
C.~E. Praeger.
\newblock Finite normal edge-transitive cayley graphs.
\newblock {\em Bulletin of the Australian Mathematical Society},
  60(2):207--220, 1999.

\bibitem{praeger2003finite}
C.~E. Praeger.
\newblock Finite transitive permutation groups and bipartite vertex-transitive
  graphs.
\newblock {\em Illinois Journal of Mathematics}, 47(1-2):461--475, 2003.

\bibitem{praeger2018permutation}
C.~E. Praeger and C.~Schneider.
\newblock {\em Permutation groups and cartesian decompositions}, volume 449 of
  {\em London Mathematical Society Lecture Note Series}.
\newblock Cambridge University Press, 2018.

\bibitem{rivera2019new}
A.~R. Rivera and P.~{\v{S}}parl.
\newblock New structural results on tetravalent half-arc-transitive graphs.
\newblock {\em Journal of Combinatorial Theory, Series B}, 135:256--278, 2019.

\bibitem{zhou2016automorphisms}
J.-X. Zhou and Y.-Q. Feng.
\newblock The automorphisms of bi-cayley graphs.
\newblock {\em Journal of Combinatorial Theory, Series B}, 116:504--532, 2016.

\end{thebibliography}
\end{document}